\begin{document}
\newtheorem{theoremi}{Theorem}
\renewcommand{\thetheoremi}{\Alph{theoremi}}

\newtheorem{theorem}{Theorem}[section]
\newtheorem{lemma}[theorem]{Lemma}
\newtheorem{fact}[theorem]{Fact}
\newtheorem{facts}[theorem]{Facts}
\newtheorem{conjecture}{Conjecture}
\newtheorem{proposition}[theorem]{Proposition}
\newtheorem{corr}[theorem]{Corollary}
\theoremstyle{definition}
\newtheorem{definition}[theorem]{Definition}
\theoremstyle{remark}
\newtheorem{notation}[theorem]{Notation}
\newtheorem{example}[theorem]{Example}
\newtheorem{claim}[theorem]{Claim}
\newtheorem{remark}[theorem]{Remark}
\newtheorem{question}{Question}
\newcommand{\rsq}[3]{{#1#2{\in}\,#3}}

\newcommand{\R}{\mathbf{R}}
\newcommand{\IN}{\mathbf{N}}
\newcommand{\Ca}{\mathcal{C}}
\newcommand{\N}{\mathbf{N}}
\newcommand{\Ps}{\mathbf{P}}
\newcommand{\IZ}{\mathbf{Z}}
\newcommand{\IQ}{\mathbf{Q}}
\newcommand{\Z}{\mathbf{Z}}
\newcommand{\IR}{\mathbf{R}}
\newcommand{\IC}{\mathcal{C}}
\newcommand{\T}{\mathrm{T}}
\newcommand{\Th}{\mathrm{Th}}
\newcommand{\Fib}{\text{Fib}}
\newcommand{\ec}{\subseteq_{\textrm{ec}}}
\newcommand{\id}{\mathrm{id}}
\newcommand{\acl}[2][]{\textrm{acl}^{#1}\left(#2\right)}
\newcommand{\op}{\mathsf{f}}
\renewcommand{\phi}{\varphi}
\newcommand{\tp}[2][]{\textrm{tp}^{#1}\left(#2\right)}
\renewcommand{\L}{\mathcal{L}}
\newcommand{\str}[1]{\mathscr{#1}}
\newcommand{\A}{\mathcal{A}}
\newcommand{\B}{\mathcal{B}}
\newcommand{\Orb}{\mathrm{Orb}}
\renewcommand{\Im}[1]{\mathrm{Im}(#1)}
\newcommand{\Imp}{\mathrm{Im}^{+}}
\newcommand{\ind}{\mathrm{ind}}
\newcommand{\tpsq}[1]{\textrm{tp}^\textrm{qf}\left(#1\right)}
\newcommand{\im}[1]{\mathrm{Im}(#1)}
\newcommand{\triv}{\mathrm{Triv}}
\newcommand{\supp}{\mathrm{supp}}
\newcommand{\I}{\mathcal{I}}

\begin{frontmatter}
  \author[add]{Quentin Lambotte\corref{cor1}%
  }
  \ead{Quentin.Lambotte@umons.ac.be}

  \author[add]{Fran\c{c}oise Point\fnref{fr}}
  \ead{point@math.univ-paris-diderot.fr}

  \cortext[cor1]{Corresponding author}

  \address[add]{D\'epartement de Math\'ematique (bâtiment De Vinci)\\
    Universit\'e de Mons\\
    20 place du Parc\\ B-7000 Mons\\ Belgium}

  \fntext[fr]{Research Director at the FRS-FNRS,
    this material is based upon work supported by the National Science
    Foundation
    under Grant No. DMS-1440140 while the author was in residence at the
    Mathematical Sciences Research Institute in Berkeley, California, during the
    Spring 2017.}

\title{On expansions of $(\IZ,+,0)$}
\date{\today}
\begin{keyword}
  expansions of $(\IZ,+)$\sep regular sequences\sep superstability\sep
  quantifier elimination\sep decidability
  \MSC[2010] 03B25\sep 03C10\sep 03C35\sep 03C45
\end{keyword}

\begin{abstract}
  Call a (strictly increasing) sequence $(r_{n})$ of natural numbers
  \emph{regular} if it satisfies the following condition: $r_{n+1}/r_{n}\to
  \theta\in\IR^{>1}\cup\{\infty\}$ and, if $\theta$ is algebraic, then $(r_{n})$
  satisfies a linear recurrence relation whose characteristic polynomial is the
  minimal
  polynomial of $\theta$. Our main result states that $(\IZ,+,0,R)$
  is superstable whenever $R$ is enumerated by a regular sequence. We give
  two proofs of this result. One relies on a result of E. Casanovas and
  M. Ziegler and the other on a quantifier elimination result. We also show that
  $(\IZ,+,0,<,R)$ is NIP whenever $R$ is enumerated by a regular sequence that is ultimately
  periodic modulo $m$ for all $m>1$.
\end{abstract}
\end{frontmatter}
\section*{Introduction}
Recently, stability properties of expansions $\str{Z}_{R}=(\IZ,+,0,R)$ of $(\IZ,+,0)$ by a
unary predicate $R$ for a subset of the integers have attracted the attention of many
researchers. Motivated by a question of A. Pillay on the induced structure on
non-trivial centralizers in the free group on two generators, D. Palac\'{i}n and
R. Sklinos proved in \cite{Pal-Sk} that for any natural number $q>1$, the
structure $\str{Z}_{\Pi_q}$ is superstable of Lascar rank $\omega$, where
$\Pi_q=\{q^n\mid n\in\IN\}$ (this was also proved independently and using different
methods by B. Poizat in \cite[Th\'eor\`eme 25]{Poi}). They also showed the same
result for $R=\{n!\mid n\in\IN\}$ and more generally for sets enumerated by sequences $(r_n)$ such that $r_{n+1}/r_n\to\infty$
and that are \textit{congruence periodic}, namely ultimately
periodic modulo $m$ for all $m>1$.
They used results of E. Casanovas and M. Ziegler
\cite{Cas-Zieg} on stable expansions by a unary predicate.  In another
direction, when $R$ is the set $\Ps$ of prime numbers, I. Kaplan and S. Shelah
showed in \cite{KapSh}, assuming Dickson's Conjecture (\cite[Conjecture
1.1]{KapSh}), that
$\str{Z}_{\Ps\cup -\Ps}$ is unstable and supersimple of Lascar rank $1$.\\
\indent In this paper, we investigate such expansions $\str{Z}_{R}$ with
$R$ interpreting a subset of the natural numbers, generalizing the
above results of D. Palac\'{i}n and R. Sklinos. Call a sequence $(r_{n}) $
\textit{regular} if it satisfies the following condition:
$r_{n+1}/r_{n}\to\theta\in\IR^{>1}\cup\{\infty\}$ and, if $\theta$ is algebraic,
$(r_{n})$ follows a linear recurrence relation whose characteristic polynomial is the
minimal polynomial of $\theta$.  For such regular sequences, we show the superstability of the corresponding expansion.
\begin{theoremi}[{Theorem \ref{theorem-sup}}]\label{theorem-a}
  Assume that $R$ is enumerated by a regular sequence. Then $\Th(\str{Z}_{R})$ is superstable
  of Lascar rank $\omega$.
\end{theoremi}
We give two proofs of this theorem. The first one follows the strategy of D. Palac\'{i}n and
R. Sklinos. The second one is based on a quantifier elimination result for such expansions.\\
\indent Expansions of Presburger arithmetic by regular sequences have been
already studied and shown to be tame. At the end of the paper, we give a proof
of an announced result in \cite{aschenbrenner,aschenbrenner2013} that expansions
of Presburger arithmetic by $\Pi_{q}$, $\{n!\mid n\in\IN\}$ or the set of Fibonacci numbers are NIP.
\begin{theoremi}[{Corollary \ref{corollary-nip}}]
  Assume that $R$ is interpreted by a congruence periodic regular sequence. Then
  $\Th(\IZ,+,0,<,R)$ is NIP.
\end{theoremi}

Let us now outline the content of the paper. From now on, we assume that $R$ is interpreted by a regular sequence.\\
\indent In Section \ref{section-regseq}, we give the first proof of Theorem \ref{theorem-a} by applying the result of E. Casanovas and M. Ziegler. In our context, their result \cite[Proposition 3.1]{Cas-Zieg} reads as follows. An expansion of the
form $\str{Z}_{R}$ is superstable whenever $R$ is bounded and the induced
structure on $R$ by
$\str{Z}_{R}$ is superstable (see Definition \ref{definition-ind} and Theorem
\ref{thm-casanovas}, and the comments after these). In fact, we only need to check that the induced
structure on $R$ by equations in $\str{Z}_{R}$ is superstable. So we analyze sets of the form $X_{\bar{a}}=\{(r_{n_{1}},\ldots,r_{n_{k}})\in
R^{k}\mid a_{1}r_{n_{1}}+\cdots+a_{k}r_{n_{k}}=0\}$, where $k\ge 1$ and $\bar{a}\in\IZ$.\\
\indent We show, in Proposition \ref{proposition-sol-eq}, that for all $\bar{a}\in\IZ^{k}$ there is $c_{\bar{a}}\in\IN$ such that if $(r_{n_{1}},\ldots,r_{n_{k}})\in X_{\bar{a}}$, then
\[\max\{|n_{i}-n_{j}|\mid1\le i,j\le k\}\le c_{\bar{a}},\]
unless there exists $I\subsetneq\{1,\ldots,k\}$ such that $\sum_{i\in I}a_{i}r_{n_{i}}=0$.
The proof of this proposition relies on the following property: sets of the form
$\{r_{n}\in R\mid a'_{0}r_{n}+a'_{1}r_{n+1}+\cdots +a'_{\ell}r_{n+\ell}=0\}$ are either finite or $R$, where $\bar{a}'\in\IZ^{\ell+1}$ and $\ell\in\IN$. The analysis of the sets $X_{\bar{a}}$ allows us to show that the induced structure on $R$ is definably interpreted in the superstable structure $(\IN,S,S^{-1},0)$, where $S(n)=n+1$, $S^{-1}(n+1)=n$ and $S^{-1}(0)=0$. Thus, the induced structure is superstable.\\
\indent Recall that a subset of $\IN$ is \emph{piecewise syndetic} if it
contains arbitrarily long sequences with bounded gaps. We use again Proposition \ref{proposition-sol-eq} to show that we cannot bound the length of expansions in base $R$ of natural numbers. In other words, we show that any set
of the form
\[\{z\in\IZ\mid z=a_{1}r_{n_{1}}+\cdots+a_{k}r_{n_{k}}\text{ for some }
  (r_{n_{1}},\ldots,r_{n_{k}})\in R^{k}\}\cap\IN\]
is not piecewise syndetic. This allows us to prove that $R$ is bounded, see Subsection \ref{section-bounded} and Theorem \ref{theorem-cover-z} therein.\\
\indent In Section \ref{section-axiomatization}, we give the second proof of Theorem \ref{theorem-sup}, using a quantifier elimination result in a language extending $\L_{R}=\{0,+,R\}$.\\
\indent We first axiomatize the theory of $\str{Z}_{R}$ in an enriched language
$\L$: to $\L_{R}$ we add new predicates interpreted in $\str{Z}_{R}$ by certain
existentially defined sets. For instance,
\[\{z\in\IZ\mid z=a_{1}r_{n_{1}}+\cdots+a_{k}r_{n_{k}}\text{ for some }
  (r_{n_{1}},\ldots,r_{n_{k}})\in R^{k}\}\]
will be quantifier-free $\L$-definable. We let $T_{R}$ be this $\L$-axiomatization of $\str{Z}_{R}$.
\begin{theoremi}[Theorem \ref{thm-eq} and Corollary \ref{cor-eq}]
  The $\L$-theory $T_{R}$ has quantifier elimination and is complete.
\end{theoremi}
\indent This quantifier elimination result allows us to prove that $T_{R}$ is
superstable by counting types. As a consequence, we recover the superstability of $\Th(\str{Z}_{R})$.\\
\indent We conclude this paper by Subsections \ref{section-decid} and
\ref{section-nip}, where we respectively show that, when $R$ is interpreted by a
congruence periodic regular sequence, $T_{R}$ is decidable (when $\theta$
can be computed effectively and the congruence periodicity is effective) and
$\Th(\IZ,+,0,<,R)$ is NIP. This last result relies on a quantifier elimination
result of the second author in \cite{P} for expansions of Presburger arithmetic
by so-called sparse predicates, introduced by A. L. S\"emenov.\\
\newline
\indent Independently of our work \cite{PointO}, G. Conant published a
paper \cite{conant} on sparsity notions and stability for sets of
integers. There he defines the notion of a \emph{geometrically sparse}
set $R$ (see \cite[Definition 6.2]{conant}). For such a set $R$, he
proves superstability of $(\Z,+,0,R)$ and calculates its Lascar rank
\cite[Theorem 7.1]{conant}. So there is an overlap between his result and our
Theorem \ref{theorem-sup} (see also \cite{Lam}); we give an account of this
overlap at the end of Section \ref{section-regseq}. We also point out that, in
the first version of this paper, our main result had an extra hypothesis on
regular sequences, namely that they were congruence periodic. This hypothesis was necessary to understand the trace, on $R$, of
congruence relations. However, G. Conant showed that in some cases, the analysis
of the trace of congruence relations is not necessary and we decided to
incorporate this in Theorem \ref{theorem-sup}. This is explained after the
statement of Theorem \ref{thm-casanovas}.

\subsection*{Notation and convention}
In this section, we fix some notations and conventions for the rest of this
paper. The set of natural numbers, of integers and of real
numbers will be denoted respectively $\IN$, $\IZ$ and $\IR$. When $X$ is one of
the above sets and $a\in X$, the notations $X^{>a}$, $X^{\ge a}$ and
$X_{\infty}^{>a}$ refer respectively to the sets $\{x\in X\mid x>a\}$,
$\{x\in X\mid x\ge a\}$ and $X^{>a}\cup\{\infty\}$. For a natural number $n$, the
set $\{1,\ldots,n\}$ will be denoted $[n]$.  The cardinality of a set $A$ will
be denoted by $|A|$. Likewise, the length of a tuple $\bar{x}$ will be denoted
$|\bar{x}|$.\\
\indent Capital letters $I$, $J$ and $K$ will refer to (usually non-empty) sets
of indices. Capital letters will refer to sets and small letters will
refer to elements of a given set. For a tuple $\bar{a}$ of length $n$ and
$I\subset [n]$, $\bar{a}_{I}$ refers to the tuple $(a_{i}\mid i\in I)$. For
$n\in\IN^{>0}$, we let $\mathfrak{P}([n])$ be the set of (ordered) partitions
$\bar{I}=(I_{1},\ldots,I_{\ell})$ of $[n]$.\\
\indent A first order language will be denoted by the letter $\L$, possibly with
a subscript. An $\L$-structure will be referred to by a round letter and its
domain by the corresponding capital letter. For instance $\str{M}$ is an
$\L$-structure whose domain is $M$. For an element $a$ of $M$ and $A\subset M$,
the notations $\acl[\L]{a/A}$, $\tp[\L]{a/A}$ mean respectively the
algebraic closure and the type of $a$ over $A$ in
$\str{M}$. If $R\in\L$ is a $n$-ary predicate symbol, the set
$\{\bar{a}\in M^{n}\mid \str{M}\models R(\bar a)\}$
will be denoted $R(M^{n})$ or simply $R$ when there is no confusion.\\
\indent We make the following (usual) abuse of notations. When $R$ is a unary
predicate symbol, expressions of the form $\exists x\in R\,\phi(x)$ and
$\forall x\in R\,\phi(x)$ respectively mean $\exists x\,(R(x)\wedge \phi(x))$
and $\forall x\,(R(x)\rightarrow\phi(x))$. An expression of the form $x>c$,
where $c\in\IN$, is an abbreviation for
$\bigwedge_{i=0}^{c}x\neq i$.\\
\indent For each $n\in\IN^{>1}$, let $D_{n}$ be a unary predicate. We let
$\L_{g}=\{+,-,0,D_{n}\mid n>1\}$ and $\L_{S}=\{S,S^{-1},c\}$, where $S$ and $S^{-1}$
are unary function symbols and $c$ is a
constant symbol. An abelian group $(G,+,-,0)$ will always be expanded to an
$\L_{g}$-structure as follows: for each
$n\in\IN^{>1}$, the symbol $D_{n}$ is interpreted as the set
$\{x\in G\mid (G,+,-,0)\models\exists y\, x=ny\}$.

\section{Expansion of $(\IZ,+,-,0)$ by a regular sequence}\label{section-regseq}
In this section, we consider expansions of $(\IZ,+,-,0)$ by a unary predicate
$R$ interpreting an infinite subset of $\IN$. We let $(r_{n})$ be the unique
(strictly increasing) enumeration of $R(\IZ)$. When there is no risk of
confusion, we will also denote $(r_{n})$ by $R$. The main result of this section
is the superstability of the expansion $\str{Z}_{R}=(\IZ,+,-,0,R)$ when
$(r_{n})$ is a \emph{regular sequence}, defined below.
\begin{definition}\label{definition-char-pol}
  Let $R=(r_{n})$ be a sequence of natural numbers that satisfy a \emph{linear
    recurrence relation}: there are $a_{0},\ldots,a_{k-1}\in\IQ$, with
  $k\in\IN^{\ge 1}$ minimal, such that for all $n\in\IN$,
  \[r_{n+k}=\sum_{i=0}^{k-1}a_{i}r_{n+i}.\]
  The polynomial $P_{R}$ defined by $P_{R}(X)=X^{k}-\sum_{i=0}^{k-1}a_{i}X^{i}$
  is called the \emph{characteristic polynomial} and the numbers
  $r_{0},\ldots,r_{k-1}$ the \emph{initial conditions}.
\end{definition}
\begin{definition}\label{definition-ref-seq}
  Let $R=(r_{n})$ be a sequence of natural numbers. We say that $R$ is
  \emph{regular} if it satisfies the following property:
  $r_{n+1}/r_{n}\to\theta\in\IR^{>1}_{\infty}$ and, if $\theta$ is algebraic
  (over $\IQ$), then $R$ satisfies a linear recurrence relation whose
  characteristic polynomial $P_{R}$ is the minimal polynomial of $\theta$.
\end{definition}
\begin{remark}
  Let $R=(r_{n})$ be defined by a recurrence relation whose characteristic
  polynomial is $P_{R}$. A. Fiorenza and G. Vincenzi, in \cite{fior1,fior2}
  provide a necessary and sufficient condition on $P_{R}$ and the initial
  conditions of $R$ for the existence of $\lim\limits_{n\to\infty}r_{n+1}/r_{n}$
  (see \cite[Theorem 2.3]{fior1}).
\end{remark}
We define an action of $\IZ[X]$ on the set of sequences of integers. We let $X$
act as the shift $\sigma$: for all $n\in\IN$, $\sigma(s_{n})=s_{n+1}$. Likewise,
$X^{i}$ acts as $\sigma^{i}$. We extend this by linearity: if
$Q(X)=\sum^{d}_{i=0}a_{i}X^{i}$, then $Q$ acts as
$\sum_{i=0}^{d}a_{i}\sigma^{i}$. This action has the following property: for all
$Q,Q'\in\IZ[X]$, $QQ'$ acts as the action of $Q'$ followed by the
action of $Q$. Let $Q\cdot$ denote the action of $Q$. Note that
$P_{R}\cdot(r_{n})=(0)$.\\
\indent We will need the following well-known result on linear recurrence
relations.
\begin{proposition}\label{proposition-rec-rel-div}
  Let $R=(r_{n})$ be a linear recurrence relation and let $Q\in\IQ[X]$,
  $Q(X)=\sum_{i=0}^{d}a_{i}X^{i}$. The following are equivalent
  \begin{enumerate}
  \item $P_{R}$ divides $Q$ (in $\IQ[X]$);
  \item $Q\cdot (r_{n})=(0)$, that is for all $n\in\IN$,
    $a_{0}r_{n}+a_{1}r_{n+1}+\cdots+a_{d}r_{n+d}=0$.
  \end{enumerate}
\end{proposition}
\begin{proof}
  By the euclidean algorithm (in $\IQ[X]$, see \cite[Theorem 2.14]{jacob}),
  we have $Q=Q_{1}P_{R}+Q_{2}$, for
  some $Q_{1},Q_{2}\in\IQ[X]$ with $\deg(Q_{2})<\deg(P_{R})$.\\
  \indent First assume that $P_{R}$ divides $Q$. Then $Q=Q_{1}P_{R}$. Thus,
  $Q\cdot(r_{n})=(Q_{1}P_{R})\cdot(r_{n})=Q_{1}
  \cdot(P_{R}\cdot(r_{n}))=Q_{1}\cdot(0)=(0)$. This
  implies that $Q\cdot(r_{n})=(0)$.\\
  \indent Second assume that $Q\cdot(r_{n})=(0)$. Since $P_{R}\cdot(r_{n})=(0)$,
  we get that $Q_{2}\cdot(r_{n})=(0)$, which contradicts the minimality of
  $P_{R}$, unless $Q_{2}=0$. So $P_{R}$ must divide $Q$.
\end{proof}

Here is a list of examples of regular sequences (we will come back to some of
these examples at the end of this section):

\begin{itemize}
\item $(n!)$;
\item $(q^{n})$, where $q\in\IN^{>1}$;
\item the sequence $(\lfloor{\pi^{n}}\rfloor)$, where $\lfloor x\rfloor$ denotes the
  integer part of $x$;
\item the Fibonacci sequence as defined by $r_{0}=1$, $r_{1}=2$ and
  $r_{n+2}=r_{n+1}+r_{n}$ for all $n\in \IN$;
\item the sequence given by the linear recurrence relation
  $r_{n+2}=5r_{n+1}+7r_{n}$, $r_{1}=1$ and $r_{0}=0$.
\end{itemize}
\indent\indent Now, let us state the main result of this section.
\begin{theorem}\label{theorem-sup}
  Assume $R(\IZ)$ is enumerated by a regular sequence. Then $\Th(\str{Z}_{R})$ is superstable of Lascar rank $\omega$.
\end{theorem}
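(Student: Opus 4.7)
The plan is to invoke the Casanovas--Ziegler theorem (Theorem \ref{thm-casanovas}), which reduces the superstability of $\str{Z}_R$ to verifying two conditions: that the predicate $R$ is \emph{small} in their sense, and that the \emph{induced structure} on $R$ inside $\str{Z}_R$ (Definition \ref{definition-ind}) is superstable. The Lascar rank computation is then handled separately via a type-counting argument.

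For the superstability of the induced structure, I would apply Corollary \ref{corollary-ind-struct}. Its proof rests on the material of Subsection \ref{section-induced}, whose central input is Proposition \ref{proposition-sol-eq}: for a regular sequence $R$, the trace on $R^{k}$ of any homogeneous linear equation $\sum c_{i} x_{i}=0$ is either finite or controlled by finitely many linear recurrence relations satisfied by almost all elements of $R$. Together with the analysis of operators $\op(n)=\sum_{i=0}^{d} a_{i} r_{n+i}$, whose zero set is either finite or cofinite in $\IN$, this pins down the shape of quantifier-free definable sets of the induced structure tightly enough that a type-counting argument over arbitrary parameter sets yields superstability.

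For smallness, I would follow Subsection \ref{section-bounded}: the aim is to rule out any uniform bound on the length of representations of integers as $R$-combinations. Concretely, for each $k$, each tuple of operators $\op_{1},\ldots,\op_{k}$ and each $z\in\IZ$, the set $\{z+\op_{1}(n_{1})+\cdots+\op_{k}(n_{k})\mid \bar{n}\in\IN\}$ is not piecewise syndetic in $\IZ$. The superlinear growth forced by $r_{n+1}/r_{n}\to\theta\in\IR^{>1}\cup\{\infty\}$ should produce, for any prescribed gap bound, arbitrarily long stretches of $\IZ$ avoided by such a set; a counting argument then gives the desired bound on the number of $R$-definable sets and hence the smallness required by Casanovas--Ziegler.

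Finally, the Lascar rank equality $U(\Th(\str{Z}_R))=\omega$ would follow the scheme of Palacin--Sklinos \cite{Pal-Sk}: the upper bound comes from the layered structure of types on $R$ (ranks accumulate only along finitely many recurrence classes at a time), while the lower bound is witnessed by a sequence of forking extensions whose ranks tend to $\omega$, built from elements that grow faster than any fixed finite combination of the $\op_{i}$. The main obstacle I anticipate is Proposition \ref{proposition-sol-eq} itself: one must simultaneously handle the algebraic case, where $R$ genuinely satisfies a linear recurrence with characteristic polynomial the minimal polynomial of $\theta$, and the transcendental case $\theta=\infty$, where only asymptotic growth is available, and extract from both a uniform recurrence-controlled description of solutions of arbitrary homogeneous equations on $R$.
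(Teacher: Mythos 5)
Your overall architecture matches the paper's: reduce via Casanovas--Ziegler (Theorem \ref{thm-casanovas}) to superstability of the induced structure plus boundedness of every $\L_{R}$-formula, with Proposition \ref{proposition-sol-eq} and the operator analysis as the crux, and settle the Lascar rank as in Palacin--Sklinos. One small discrepancy: the paper does not get superstability of the induced structure by direct type-counting; Corollary \ref{corollary-ind-struct} shows that $R_{\ind}^{0}$ is definably interpreted in $(\IN,S)$, Proposition \ref{proposition-succ} (Conant) gives superstability of the expansion of $(\IN,S)$ by all unary predicates, and Conant's observation that $R_{\ind}$ is a unary expansion of $R_{\ind}^{0}$ finishes the job.

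The genuine gap is in your smallness argument. Exhibiting, for each gap bound $d$, arbitrarily long intervals of $\IZ$ avoided by $\{z+\op_{1}(n_{1})+\cdots+\op_{k}(n_{k})\mid\bar{n}\in\IN\}$ does not refute piecewise syndeticity: a piecewise syndetic set may have arbitrarily long gaps (that is exactly the ``piecewise'' in the definition --- one only needs, for a single $d$, arbitrarily long runs with consecutive gaps at most $d$, and these runs may be separated by enormous deserts), and density-zero sets can be piecewise syndetic. Growth of $(r_{n})$ alone is therefore not enough. The paper's Proposition \ref{proposition-image-op} proceeds by induction on $k$: assuming the image is piecewise syndetic with witness $d$, it applies Brown's Lemma to the sets $X_{i}$ of image points at distance exactly $i$ from another image point, and then uses Proposition \ref{proposition-sol-eq} on the difference equation $\sum_{j}\op_{j}(n_{j})-\sum_{j}\op_{j}(n_{j}')=i$ to write each $X_{i}$ as a finite union of translates of images of \emph{shorter} operator tuples, contradicting the induction hypothesis. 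Already for $R=(2^{n})$ and $k=2$ the image is the set of integers with at most two nonzero binary digits, and excluding piecewise syndeticity requires controlling the solutions of $2^{a}+2^{b}-2^{c}-2^{d}=i$, not merely observing sparseness. Finally, the passage from ``not piecewise syndetic'' to the hypothesis of Theorem \ref{thm-casanovas} is Corollary \ref{corollary-bounded} (every $\L_{R}$-formula is bounded), obtained by the Palacin--Sklinos argument realizing the types $\Gamma(y)$ and using the failure of the finite cover property for $(\IZ,+,0)$ --- not by a count of $R$-definable sets.
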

The proof of this theorem follows the same strategy as D. Palac\'{i}n and R. Sklinos
in \cite{Pal-Sk} which is based on the following result of E. Casanovas and
M. Ziegler \cite{Cas-Zieg}.
\begin{definition}\label{definition-ind} Let $\str{M}$ be an $\L$-structure and
  $A\subset M$, $A\neq\emptyset$.
  \begin{enumerate}
  \item We say that $\str{M}$ \emph{does not have the finite cover property} (in
    short: $\str{M}$ is nfcp) if for all formulas $\phi(x,\bar{y})$, there exists
    $k\in\IN$ such that for all $\bar{m}_{i}\in M^{|\bar{y}|}$, $i\in I$, if the set
    $X=\{\phi(x,\bar{m}_{i})\mid i\in I\}$ is $k$-consistent, then $X$ is
    consistent. Similarly, we say that $\str{M}$ \emph{has} nfcp \emph{over $A$}
    if for all formulas $\phi(x,\bar{y},\bar{z})$, there exists
    $k\in\IN$ such that for all $\bar{a}_{i}\in A^{|\bar{y}|}$ $\bar{m}_{i}\in
    M^{|\bar{z}|}$, $i\in I$, if the set
    $X=\{\phi(x,\bar{a}_{i},\bar{m}_{i})\mid i\in I\}$ is $k$-consistent, then $X$ is
    consistent.
  \item To each $\L$-formula $\phi(x_{1},\ldots,x_{n})$, we associate a new
    $n$-ary predicate $R_{\phi,n}$ and we denote by $\L_{\ind}$ the
    language
    $$\{R_{\phi,n}\mid \phi(x_{1},\ldots,x_{n})\textrm{ is an
    }\L\textrm{-formula}\}.$$ The \emph{induced structure} on $A$ (by
    $\str{M}$), denoted $A_{\ind}$, is the $\L_{\ind}$-structure whose domain is
    $A$ and $R_{\phi,n}(A)=\phi(M^{n})\cap A^{n}$. Similarly, we define
    $A_{\ind}^{0}$ to be the induced structure on $A$ by equations ($\L_{\ind}$
    is replaced by $\L_{\ind}^{0}$, which contains the symbols $R_{\phi,n}$,
    where $\phi(x_{1},\ldots,x_{n})$ is a boolean combination of
    equations\footnote{By an equation $\phi(x_{1},\ldots,x_{n})$, we mean an
      atomic formula of the form $t(x_{1},\ldots,x_{n})=t'(x_{1},\ldots,x_{n})$ where
      $t$ and $t'$ are $\L$-terms.} in $\L$).
  \item Let $R$ be a unary predicate not in $\L$ and let $\L_{R}$ be the
    language $\L\cup\{R\}$. Let $\str{M}_{A}$ denote the $\L_{R}$-expansion of
    $\str{M}$, with $R(M)=A$.
    \begin{enumerate}
    \item We say that $A$ is \emph{small} if there is an $\L_{R}$-structure
      $\str{N}_{R(N)}$ elementary equivalent to $\str{M}_{A}$ such that: for all
      finite subsets $B$ of $N$, any type in $\L$ over $B\cup R(N)$ is realized
      in $\str{N}$.
    \item We say that $A$ is \emph{bounded} if for all $\L_{R}$-formulas
      $\phi(\bar{x})$ there is an $\L$-formula $\psi(\bar{x},\bar{y})$ such that
      $\phi$ is equivalent (in $\str{M}$) to the formula
      $\rsq{Q_{1}}{y_{1}}{R}\,\ldots
      \;\rsq{Q_{n}}{y_{n}}{R}\,\psi(\bar{x},\bar{y})$, where
      $Q_{i}\in\{\exists,\forall\}$.
    \end{enumerate}
  \end{enumerate}
\end{definition}
The main result of \cite{Cas-Zieg} states that the stability of $\str{M}_{A}$ is
equivalent to that of $\str{M}$ and $A_{\ind}$, under the assumptions of nfcp
for $\str{M}$ and smallness for $A$, see \cite[Theorem A]{Cas-Zieg} for a
precise statement. To prove \cite[Theorem A]{Cas-Zieg}, E. Casanovas and
M. Ziegler first show that when $\str{M}$ is nfcp, if $A$ is small then $A$ is
bounded. Next they show an analogue of \cite[Theorem A]{Cas-Zieg} under the sole
assumption of boundness of $A$. This is actually the result used by
D. Palac\'{i}n and R. Sklinos in \cite{Pal-Sk}.
\begin{theorem}[{\cite[Proposition 3.1]{Cas-Zieg}}]\label{thm-casanovas}
  Let $\str{M}$ be an $\L$-structure and let $A\subset M$. Suppose that $A$ is
  bounded. Then for all $\lambda\ge|\L|$, if $\str{M}$ and $A_{\ind}$ are
  $\lambda$-stable, then $\str{M}_{A}$ is $\lambda$-stable.
\end{theorem}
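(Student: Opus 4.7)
The plan is a type-counting argument: fix $B\subseteq M$ with $|B|\le\lambda$ and bound $|S^{\str{M}_A}(B)|$ by $\lambda$. First I reduce to bounded formulas. By the hypothesis, every $\L_R$-formula over $B$ is equivalent in $\str{M}_A$ to a bounded one, so the $\L_R$-type over $B$ of a tuple $\bar a\in M$ is determined by its restriction to bounded formulas. A bounded formula has the shape $Q_1y_1\in R\;\cdots\;Q_ny_n\in R\;\psi(\bar x,\bar y,\bar b)$ with $\psi$ an $\L$-formula and $\bar b\in B$, and its truth on $\bar a$ depends only on the \emph{trace}
$$T_{\psi,\bar b}(\bar a):=\psi(\bar a,M^{n},\bar b)\cap A^{n}\subseteq A^{n}.$$
Hence $\mathrm{tp}^{\str{M}_A}(\bar a/B)$ is determined by the family $(T_{\psi,\bar b}(\bar a))_{\psi,\bar b}$, equivalently by which $\L$-formulas $\psi(\bar a,\bar c,\bar b)$ hold in $\str{M}$ as $\bar c$ ranges over $A$.

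Second, I split via the $\L$-type. Restriction gives a map $S^{\str{M}_A}(B)\to S^{\str{M}}(B)$, $p\mapsto p\restriction\L$, whose image has size $\le\lambda$ by $\lambda$-stability of $\str{M}$, so it suffices to bound each fibre. Fix $p_{0}\in S^{\str{M}}(B)$ and consider tuples $\bar a$ realizing $p_{0}$. Each trace $T_{\psi,\bar b}(\bar a)\subseteq A^{n}$ is, by the very definition of the induced structure, the intersection with $A^{n}$ of an $\L$-definable subset of $M^{n}$ with parameters $\bar a,\bar b$; since $\bar a$ may not lie in $A$, these parameters are \emph{external} to $A_{\ind}$. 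I would argue that the whole trace-family attached to $\bar a$ is controlled by a single type after enriching $A_{\ind}$ with names for these externally definable sets: two tuples $\bar a,\bar a'$ realizing $p_{0}$ give the same trace-family if and only if they determine the same enriched type over $B\cap A$. Shelah's theorem that the externally definable expansion of a $\lambda$-stable structure is $\lambda$-stable, applied to $A_{\ind}$, then bounds the number of such enriched types by $\lambda$, yielding at most $\lambda$ trace-families per $p_{0}$.

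Combining the two counts gives $|S^{\str{M}_A}(B)|\le\lambda\cdot\lambda=\lambda$, as required. The main obstacle is the ``external parameter'' step: because $\bar a$ may lie outside $A$, the trace $T_{\psi,\bar b}(\bar a)$ is not literally $A_{\ind}$-definable, and one must convert this external data into internal counting without inflation. The cleanest route, as sketched, goes through Shelah's theorem on externally definable sets in stable theories; a more elementary alternative exploits definability of types in the stable $\str{M}$ to replace each external parameter $\bar a$ by a small canonical code over a set of size $\le\lambda$, after which one counts types in $A_{\ind}$ directly.
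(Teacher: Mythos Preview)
The paper does not prove this statement; it is quoted from Casanovas--Ziegler and used as a black box. So there is no argument in the paper to compare yours against, and I can only assess your sketch on its own merits.

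Your overall architecture is sound and is the standard type-counting approach: reduce to bounded formulas, observe that the bounded type of $\bar a$ over $B$ is governed by the trace family $(T_{\psi,\bar b}(\bar a))_{\psi,\bar b}$, split off the $\L$-type over $B$, and bound the fibres. The gap is exactly where you locate it, but neither of your proposed fixes actually closes it. Your primary route invokes ``Shelah's theorem that the externally definable expansion of a $\lambda$-stable structure is $\lambda$-stable''; this does not apply. Shelah's expansion theorem concerns traces on a model of sets definable in an \emph{elementary extension} of that model, whereas here the external parameters $\bar a,\bar b$ live in $\str M$, and $\str M$ is not an $\L_{\ind}$-structure at all, let alone an elementary extension of $A_{\ind}$. (For stable theories the Shelah expansion is in any case trivial, since definability of types over models forces every externally definable set, in that sense, to be internally definable; so even if the setup matched, nothing new would be gained.)

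Your alternative, via definability of $\L$-types in the stable $\str M$, is the right ingredient, but the sketch does not carry out the counting. What is needed is to show that the trace family of $\bar a$ is determined by (i) $\mathrm{tp}^{\L}(\bar a/B)$ together with (ii) a datum that can be packaged as an $\L_{\ind}$-type over a subset of $A$ of size at most $\lambda$; only at step (ii) does $\lambda$-stability of $A_{\ind}$ enter. Producing that datum and verifying that at most $\lambda$ choices are made at each stage is the actual content of the Casanovas--Ziegler argument; your sketch names the tool but does not supply the mechanism linking definability in $\str M$ to type-counting in $A_{\ind}$.
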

Let $R(\IZ)\subset \IN$, enumerated by a regular sequence $(r_{n})$. Recall that
$\str{Z}$ has quantifier elimination in $\L_{g}$ (see
\cite[Theorem15.2.1]{rothmaler}) and is superstable (see
\cite[Theorem 15.4.4]{rothmaler}). So, in order to show that $\str{Z}_{R}$ is
superstable, we only need to show that $R$ is small and that $R_{\ind}$ is
superstable. For the latter, the study of $R_{\ind}$ is reduced to the study of
the trace on $R(\IZ)$ of equations of the form $a_{1}x_{1}+\cdots+a_{n}x_{n}=0$
or divisibility relations of the form
$D_{m}(a_{1}x_{1}+\cdots+a_{n}x_{n})$. Actually, we can further reduce the
analysis of $R_{\ind}$ to the analysis of $R^{0}_{\ind}$, using the following
observation of G. Conant (see \cite[Section 5]{conant}).  For $\str{N}$ an
$\L$-structure, let $\str{N}^{1}$ be the expansion of $\str{N}$ by predicates
for \emph{all} subsets of $N$. G. Conant observed that $R_{\ind}$ is an
expansion of $R_{\ind}^{0}$ by unary predicates \cite[Corollary 5.7]{conant}. As
a consequence of this observation, if $R_{\ind}^{0}$ is definably interpreted in
a structure $\str{N}$ whose expansion $\str{N}^{1}$ is superstable, then
$R_{\ind}$ is superstable. We apply this by showing that $R_{\ind}^{0}$ is
definably interpreted in the structure $\str{N}=(\IN,S)$, where $S(n)=n+1$,
whose expansion $\str{N}^{1}$ has been shown to be superstable \cite[Proposition
5.9]{conant}. This is done as follows. We interpret $R(\IZ)$ by
$\IN$. Given an equation $a_{1}x_{1}+\cdots+a_{k}x_{k}=0$, where
$\bar{a}\in\IZ^{k}$, we interpret the set of solutions
$(r_{n_{1}},\ldots,r_{n_{k}})$ in $R(\IZ)^{k}$ as the set
$\{(n_{1},\ldots,n_{k})\in\IN^{k}\mid a_{1}r_{n_{1}}+\cdots+a_{k}r_{n_{k}}=0\}$ and we
show that it is $\L_{S}$-definable. We actually prove that sets of the form
\[
  N_{\bar{a}}=\left\{(n_{1},\ldots,n_{k})\in\IN^{k}\,\left|\, a_{1}r_{n_{1}}+\cdots+a_{k}r_{n_{k}}=0\text{
    and for all }I\subset[k],\sum_{i\in I}\right.a_{i}n_{i}\neq 0\right\}.
\]
are $\L_{S}$-definable and explain why this is enough to conclude. In order to
show that the sets $N_{\bar{a}}$ are definable, we first show that functions of
the form $\op:\IN\to \IZ:n\mapsto
b_{0}r_{n}+b_{1}\sigma(r_{n})+\cdots+b_{d}\sigma^{d}(r_{n})$, where
$\bar{b}\in\IZ$, called \emph{operators} on $R$, behave predictably: either
$\op(n)=0$ for all $n\in\IN$ or $\op(n)\neq 0$ for all but
finitely many $n\in\IN$, see Proposition \ref{proposition-operators}. So for any
operator $\op$, the set $\{n\in\IN\mid \op(n)=0\}$ is $\L_{S}$-definable. Then, we
show that the set $N_{\bar{a}}$ is controlled by
finitely many operators, see Proposition \ref{proposition-sol-eq} and Remark
\ref{remark-control}. This control has the effect to reduce the definability of
$N_{\bar{a}}$ to the definability of sets of the form $\{n\in\IN\mid \op(n)=0\}$.\\
\indent The boundness of $R$ will be a consequence of Propositions \ref{proposition-operators} and
\ref{proposition-sol-eq}: we deduce from them that a set
of the form $a+d\IN$ cannot be covered by finitely many sets of the form
$\{z+\op_{1}(n_{1})+\cdots+\op_{k}(n_{k})\mid \bar{n}\in\IN\}$, where $z\in\IZ$ and
$\op_{1},\ldots,\op_{k}$ are operators on $R(\IZ)$. This is done in Section
\ref{section-bounded}.
\subsection{The induced structure on a regular sequence}\label{section-induced}
Throughout this section we fix $R(\IZ)\subset\IN$ and we assume that $(r_{n})$
is a regular sequence and
$\theta=\lim\limits_{n\to\infty}r_{n+1}/r_{n}\in\IR^{>1}_{\infty}$.
\begin{definition}\label{definition-op}
  Let $Q\in\IZ[X]$, $Q(X)=\sum^{d}_{i=0}a_{i}X^{i}$, $\bar{a}\in\IZ^{d+1}$. The
  \emph{operator} associated to $Q$, denoted $\op_{Q}$ or simply $\op$, is the
  function
  $\op:\IN\to \IZ:n\mapsto
  a_{0}r_{n}+a_{1}\sigma(r_{n})\cdots+a_{d}\sigma^{d}(r_{n})$.
\end{definition}
The aim of this section is to understand the structure of subsets of $\IN^{k}$ of the form
\begin{align}
  \label{eq:2}
  N_{\bar{\op},z}=\{\bar{n}\in\IN^{k}\mid \op_{1}(n_{1})+\cdots+\op_{s}(n_{k})=z\},
\end{align}
where $\op_{i}$ is an operator, $i\in[s]$ and $z\in\IZ$.
\subsubsection{Operators on a regular sequence}
We start with the case of a single operator, that is with sets of the form
$N_{\op,z}$.
\begin{proposition}\label{proposition-operators}
  Let $\op$ be an operator. Then $N_{\op,0}$ is either finite or $\IN$.
\end{proposition}
The proof of this proposition follows from the following lemmas.
\begin{lemma}
  Suppose that $\theta=\infty$. Let $Q\in\IZ[X]\setminus\{0\}$. Then $N_{\op_{Q},0}$ is finite.
\end{lemma}
\begin{proof}
  Assume that $Q(X)=\sum_{i=0}^{d}a_{i}X^{i}$ and let $\op=\op_{Q}$. Then
  $\op(n)=0$ if and only if
  $a_0r_n/r_{n+d}+\cdots+a_{d-1}r_{n+d-1}/r_{n+d}+a_{d}=0$. Thus, as
  $r_{n+i}/r_{n+d}\to 0$ for all $0\leq i<d$, for all $n$ sufficiently large,
  $\op(n)\neq 0$.
\end{proof}
\begin{lemma}\label{lemma-op-fin}
  Suppose that $\theta\in\IR^{>1}$. Let $Q\in\IZ[X]\setminus\{0\}$ and suppose
  that $Q(\theta)\neq 0$. Then $N_{\op_{Q},0}$ is finite.
\end{lemma}
\begin{proof}
  Assume that $Q(X)=\sum_{i=0}^{d}a_{i}X^{i}$ and let $\op=\op_{Q}$. Let
  $u=Q(\theta)$ and let $\|u\|=\sum_{i=0}^d|a_i|$.  Since
  $N_{\op,0}=N_{-\op,0}$, we may assume that $u>0$.  Choose $\epsilon>0$ such
  that $\epsilon \|u\|<u$ and let $k\in\IN$ be such that for all $n\geq k$ and
  $i\in[d]$, $|r_{n+i}-\theta^ir_n|<\epsilon r_n$. Then, for all
  $n\ge k$, $|a_ir_{n+i}-a_i\theta^ir_n|<\epsilon |a_i| r_n$ (whenever
  $a_i\neq 0$). By our choice of $\epsilon$ we have
  $0< r_n(u-\epsilon\|u\|)< a_0r_n+\cdots+a_mr_{n+d}$, for all $n\ge k$.
\end{proof}
\begin{remark}
  We choose to make an $\epsilon$-style proof of the previous lemma to stress the
  fact that, when $(r_{n+1}/r_{n})$ converges to $\theta$ effectively, we can
  bound effectively the size of $N_{\op,0}$. We can adapt this kind of proof to
  show that the size of
  any set of the form $N_{\op,z}$ can be bounded effectively if finite. This
  will be needed in Section \ref{section-decid}, where we look at the
  decidability of $\str{Z}_{R}$.
\end{remark}
\begin{lemma}
  Suppose that $R$ satisfies a linear recurrence. Then for any $Q\in\IZ[X]$, $N_{\op_{Q},0}$ is either finite or
  $\IN$.  Furthermore, $N_{\op_{Q},0}=\IN$ if and only if $Q(\theta)=0$.
\end{lemma}
\begin{proof}
  Notice that, by assumption, $\op(n)/r_n\to Q(\theta)$. Thus, if
  $Q(\theta)\neq 0$, $N_{\op,0}$ is finite. Otherwise, $P_R$ divides $Q$ and in
  this case, by Proposition \ref{proposition-rec-rel-div}, $N_{\op,0}=\IN$.
\end{proof}
We end this section with by a description of sets of the form $N_{\op,z}$ when
$z\neq 0$.
\begin{proposition}\label{proposition-op-inhom}
  Let $\op$ be an operator and $z\in\IZ\backslash\{0\}$. Then $N_{\op,z}$ is
  finite.
\end{proposition}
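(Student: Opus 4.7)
My plan is to reuse the trichotomy from the proof of Proposition \ref{proposition-operators}, but to push each branch a little further: in two of the three branches I will show that $|\op(n)|\to\infty$, which of course implies that $\op(n)=z$ can hold for only finitely many $n$; in the remaining branch I will invoke the fact that $\op$ already vanishes cofinitely often, so it takes the nonzero value $z$ only finitely often. Writing $P(X)=a_0+a_1X+\cdots+a_dX^d$, the split is governed by the Kepler limit $\theta$ of $R$ and by whether $P(\theta)=0$.

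If $\theta=\infty$, I will factor $\op(n)=r_{n+d}\bigl(a_d+\sum_{i<d}a_i\,r_{n+i}/r_{n+d}\bigr)$; the bracketed quantity tends to $a_d\neq 0$ and $r_{n+d}\to\infty$, hence $|\op(n)|\to\infty$. If $\theta\in\IR$ and $P(\theta)\neq 0$, the estimate extracted from the proof of Lemma \ref{lemma-op-fin} gives $|\op(n)|\geq r_n\bigl(|P(\theta)|-\epsilon\bar u\bigr)$ for suitable $\epsilon>0$ and large $n$, and once again $r_n\to\infty$ forces $|\op(n)|\to\infty$. If $\theta\in\IR$ and $P(\theta)=0$, then $P_R$ divides $P$ and the last lemma of the subsection yields $\op(n)=0$ for cofinitely many $n$, so $\op(n)=z\neq 0$ for only finitely many $n$. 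Since a regular sequence falls into exactly one of these three cases, the conclusion follows.

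I do not anticipate a real obstacle: the heavy lifting has already been done in the lemmas used to prove Proposition \ref{proposition-operators}. The only point worth flagging is that those lemmas were phrased as ``$\op(n)\neq 0$ eventually,'' which on its own would not rule out infinitely many hits of a fixed nonzero value $z$; one must observe that their proofs actually establish the stronger statement $|\op(n)|\to\infty$, and it is precisely this strengthening that feeds Proposition \ref{proposition-op-inhom}.
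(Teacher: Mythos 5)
Your proposal is correct and follows essentially the same route as the paper: the paper's proof also reduces to the observation that $u=\lim_{n\to\infty}\op(n)/r_{n+d}$ is either nonzero (in which case $\op(n)=z$ fails eventually because $z/r_{n+d}\to 0$, which is just your ``$|\op(n)|\to\infty$'' phrased via normalization by $r_{n+d}$) or zero, in which case $\theta$ is algebraic, $P_R$ divides $P$, and $\op$ vanishes cofinitely. The only cosmetic difference is that the paper merges your first two cases into the single condition $u\neq 0$, while you, quite reasonably, flag and verify the strengthening of the earlier lemmas from ``eventually nonzero'' to ``tending to infinity in absolute value.''
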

\begin{proof}
  Assume that $\op=\op_{Q}$, where $Q(X)=a_{0}+a_{1}X+\cdots+a_{d}X^{d}$. We may
  assume that $Q\neq 0$. Since
  $R$ is regular, we have that $u=\lim_{n\to\infty}\op(n)/r_{n+d}$ is either
  $a_{d}$ (when $\theta=\infty$) or $a_{0}\theta^{-d}+\cdots+a_{d}=\theta^{-d}Q(\theta)$ (when
  $\theta\in\IR^{>1}$). Notice that if $u\neq 0$, then $N_{\op,z}$ is finite
  since $\lim_{n\to\infty}z/r_{n+d}=0$. Now, if $u=0$, then $\theta$ is
  algebraic: $Q(\theta)=0$. Thus, $R$ satisfies a linear recurrence relation,
  and since $P_{R}$ divides $Q$, we have that $N_{\op,0}=\IN$, by Proposition
  \ref{proposition-rec-rel-div}. We conclude that $N_{\op,z}=\emptyset$.
\end{proof}
\subsubsection{Equations and the induced structure}

Let $Q_{1},\ldots,Q_{s}\in\IZ[X]$ be operators and let $\op_{i}=\op_{Q_{i}}$ for
all $i\in[s]$. Let $z\in\IZ$. We consider the
question whether $z$ is in the image of the sum of these operators. This amounts
to determine when the equation $\op_{1}(x_{1})+\cdots+\op_{s}(x_{s})=z$ has a solution in $\IN^{s}$. We call a tuple $\bar{n}\in \IN^{s}$ a \emph{non-degenerate solution}
of $\op_{1}(x_{1})+\cdots+\op_{s}(x_{s})=z$ when the following conditions hold:
\begin{enumerate}
\item $\op_{1}(n_{1})+\cdots+\op_{s}(n_{s})=z$;
\item for all $I\subsetneq[s]$, $\sum_{i\in I}\op_i(n_i)\neq 0$.
\end{enumerate}
We now explain how to decompose $N_{\bar{\op},z}$ into sets of non-degenerate
solutions.\\
\indent Let $\bar{I}=(I_{1},\ldots,I_{k})\in\frak{P}([n])$. To this
partition we associate the following system of equations:
\begin{equation}\label{eq:3}
  \begin{cases}
    \sum_{i\in I_{1}}\op_{i}(n_{i})=z,\\
    \sum_{i\in I_{2}}\op_{i}(n_{i})=0,\\
    \vdots\\
    \sum_{i\in I_{k}}\op_{i}(n_{i})=0.\\
  \end{cases}
\end{equation}
Let \newcommand{\nd}{\mathrm{nd}}
\begin{align*}
  N_{\bar{\op},z,\bar{I}}^{\nd}=\Big\{\bar{n}\in\IN^{s}\,\Big|\,
  & \bar{n}_{I_{1}}
    \text{ is a non-degenerate solution of }
    \sum_{i\in I_{1}}\op_{i}(n_{i})=z
    \text{ and for all } j\in[k]^{>1}\\
  &\bar{n}_{I_{j}}\text{ is a non-degenerate solution of}
    \sum_{i\in I_{j}}\op_{i}(n_{i})=0\Big\}.
\end{align*}
\newcommand{\bop}{\bar{\op}} When $\bar{I}=([s])$, we use $N^{\nd}_{\bop,z}$
instead of $N^{\nd}_{\bop,z,\bar{I}}$. In this setting, we decompose
$N_{\bop,z}$ as
\begin{equation}
  \label{eq:1}
  N_{\bop,z}=\bigcup_{\bar{I}\in\mathfrak{P}([s])}N^{\nd}_{\bop,z,\bar{I}}.
\end{equation}
\indent This decomposition will prove to be quite useful as the set of
non-degenerate solutions of $\op_{1}(x_{1})+\cdots+\op_{s}(x_{s})=z$ is easily
understood. For
instance, Proposition \ref{proposition-sol-eq} implies that for some constant
$m$ depending only on $\bar{\op}$ and $z$, if $\bar{n}$ is a non-degenerate
solution, then
$\max\{|n_{i}-n_{j}|\mid i,j\in[s]\}\le m$.
\begin{proposition}\label{proposition-sol-eq} Let $\op_{1},\ldots,\op_{s}$ be
  operators and $z\in\IZ$. Then, there exist $k\in\IN$ and
  $\bar{m}_1,\ldots,\bar{m}_k$ $\in\IZ^{s}$ such that for all
  $\bar{\ell}\in\IN^{s}$, if $\bar{\ell}\in N_{\bop,z}^{\nd}$ then for some $i\in[k]$,
  $\ell_j=\ell_1+m_{ij}$ for all $j\in [s]$.
\end{proposition}
\begin{proof}
  Assume that $\op_{j}=\op_{Q_{j}}$ where
  $Q_{j}(X)=\sum_{i=0}^{d_{j}}a_{ji}X^{i}$ and $a_{jd_{j}}\neq 0$.  Suppose,
  towards a contradiction, that the proposition is false:
  \begin{enumerate}
  \item[$(\star)$] for all $k\in\IN$ and
    $\bar{m}_{1},\ldots,\bar{m}_{k}\in\IZ^{s}$, there exists
    $\bar{\ell}\in N^{\nd}_{\bop,z}$ such that for all $i\in[k]$,
    $\ell_{j}\neq \ell_{1}+m_{ij}$ for some $j\in[s]$.
  \end{enumerate}
  From this, we construct two sequences $(\bar{\ell}_{i})\subset \IN^{s}$
  and $(\bar{m}_{i})\subset\IZ^{s}$ that will help us reach a contradiction.\\
  \indent Start with any $\bar{\ell}_{1}\in N^{\nd}_{\bop,z}$ and define
  $\bar{m}_{1}$ as $m_{1j}=\ell_{1j}-\ell_{11}$ for all $j\in[s]$. Assuming
  $\bar{\ell}_{i}$ and $\bar{m}_{i}$ are constructed, we let $\bar{\ell}_{i+1}$ be a
  non-degenerate solution obtained from $(\star)$ with $k=i$ and
  $\bar{m}_{1},\ldots,\bar{m}_{i}$. We define $\bar{m}_{i+1}$ as
  $m_{(i+1)j}=\ell_{(i+1)j}-\ell_{(i+1)1}$ for all $j\in[s]$.\\
  \indent We may assume, up to a permutation of $\bar{\op}$ and passing to a
  sub-sequence using the pigeonhole principle, that
  \begin{enumerate}
  \item for all $i\in\IN$, $m_{ij}\le m_{i(j+1)}$ for all $j<s$.
  \end{enumerate}
  Again, using the pigeonhole principle, we may assume that
  \begin{enumerate}
    \setcounter{enumi}{1}
  \item there is $j^{*}\in[s]$ such that for all $i\in\IN$, $m_{ij^{*}}=0$;
  \item\label{item:1} for all $i\in\IN$, $m_{is}<m_{(i+1)s}$.
  \end{enumerate}
  We now decompose the tuples $\bar{m}_{i}$, $i\in\IN$, in two parts according
  to whether the differences $m_{is}-m_{ij}=\ell_{is}-\ell_{ij}$ are bounded. Let
  $J\subset[s]$ be of maximal size such that for all $j\in J$,
  $\max\{m_{is}-m_{ij}\mid i\in\IN\}<\infty$. Notice that $s\in J$ and
  $j^{*}\notin J$.  Applying the pigeonhole principle several times, we may
  assume, without loss of generality, that
  \begin{enumerate}
    \setcounter{enumi}{3}
  \item \label{item:2} for all $j\in J$, there exists $k_j\in\IN$ such that
    $m_{is}-m_{ij}=k_j$ for all $i\in\IN$;
  \item \label{item:3} for all $j\notin J$, $m_{is}-m_{ij}\to\infty$.
  \end{enumerate}

  \indent Let $j_{0}=\min J$ and, for all $i\in\IN$ and $j\in J$, rewrite
  $\ell_{ij}$ as $\ell_{ij_{0}}+(m_{ij}-m_{ij_{0}})=\ell_{ij_{0}}+(k_{j_{0}}-k_{j})$ (note that
  $k_{j_{0}}-k_{j}\ge 0$). Set
  $Q'_{j}(X)=\sum_{n=1}^{d_{j}}a_{jn}X^{k_{j_{0}}-k_{j}+n}$ and
  $\op'_{j}=\op_{Q'_{j}}$. We have, for all $j\in J$,
  \begin{align*}
    \op_{j}(\ell_{ij})&=\op_{j}(\ell_{ij_{0}}+(m_{ij}-m_{ij_{0}}))\\
                   &= \op_{j}(\ell_{ij_{0}}+(k_{j_{0}}-k_{j}))\\
                   &=\op'_{j}(\ell_{ij_{0}}).
  \end{align*}
 Define $Q(X)=\sum_{j\in J}Q'_{j}(X)$, let $d$ be the degree of $Q$ and $a_d$
 be the coefficient of $X^d$ in $Q$. For all $i\in\IN$,
  \begin{align*}
    \sum_{j\in J}\op_{j}(\ell_{ij})&=\sum_{j\in J}\op'_{j}(\ell_{ij_{0}})\\
                                    &=\op_{Q}(\ell_{ij_{0}}).
  \end{align*}
  Notice that by non-degeneracy and the fact that $J$ is a proper (non-empty)
  subset of $[s]$, $Q\neq 0$ (in particular $a_{d}\neq 0$).\\
  \indent Since, for all $n\in\IN$,
  $\lim\limits_{k\to\infty}r_{n}/r_{n+k}=0$, for all $j\notin J$,
  \[u_{j}=\lim_{i\to\infty}\frac{\op_{j}(\ell_{i1}+m_{ij})}{r_{\ell_{1i}+m_{ij_{0}}+d}}=\lim_{i\to\infty}\sum_{n=0}^{d_{j}}\frac{a_{jn}r_{\ell_{i1}+m_{ij}+n}}{r_{\ell_{1i}+m_{ij_{0}}+d}}=0.\]
  (Indeed, by \ref{item:2} and \ref{item:3}, for all $j\notin J$,
  $m_{ij_{0}}-m_{ij}=m_{is}-m_{ij}-k_{j_{0}}\to \infty$.)\\
  \indent Let us now perform a similar calculation. Recall that for all
  $k\in\IN^{>0}$,
  \begin{equation*}
    \lim\limits_{n\to\infty}r_{n}/r_{n+k}=
    \begin{cases}
      \theta^{-k} &\text{if }\theta\in\IR^{>1}\\
      0 &\text{if } \theta=\infty,
    \end{cases}
  \end{equation*}
  So we have that
  \begin{align*}
    u_{J}=
    &\lim_{i\to\infty}\frac{\sum_{j\in J}
      \op_{j}(\ell_{i1}+m_{ij})}{r_{\ell_{i1}+m_{ij_0+d}}}\\
    =&\lim_{i\to\infty}\frac{\op_{Q}(\ell_{ij_{0}})}{r_{\ell_{ij_0+d}}}\\
    =&\begin{cases}
      \theta^{-d}Q(\theta) &\text{if }\theta\in\IR^{>1}\\
      a_{d}&\text{if }\theta=\infty .
    \end{cases}
  \end{align*}
  Thus,
  \[\lim_{i\to\infty}\sum_{j=1}^s\frac{\op_j(\ell_{i1}+m_{ij})}{r_{\ell_{i1}+m_{ij_{0}}+d}}
    =u_{J}+\sum_{j\notin J}u_{j}=u_{J}=\lim_{i\to\infty}\frac{z}{r_{\ell_{i1}+m_{ij_{0}}+d}}=0,\]
  where that last equality comes from the fact that, by \ref{item:1} and
  \ref{item:2}, the sequence $(r_{\ell_{i1}+m_{ij_{0}}+d})$ is not bounded.\\
  \indent Since $u_{J}=0$ and $a_{d}\neq 0$, we must have $\theta\in\IR^{>1}$. In
  particular, $u_{J}=\theta^{d}Q(\theta)$. So $Q(\theta)=0$. Since $R$ is
  regular and $Q\neq 0$, $R$ satisfies a linear recurrence relation and $P_{R}$
  divides $Q$. So by Proposition \ref{proposition-operators}
  $N_{\op_{Q},0}=\IN$, in contradiction with the assumption that
  $\bar{\ell}_{i}$ is non-degenerate for all $i\in\IN$ and the fact that $J$ is
  a proper non-empty subset of $[s]$.
\end{proof}
\indent For an operator $\op_{Q}$, $Q(X)=\sum_{i=0}^{d}a_{i}X^{i}$, let
\[
  N_{\op_{Q},z}^{\circ}=\{n\in N_{\op_{Q},z}\mid \text{for all
  }I\subsetneq[d],\sum_{i\in I}a_{i}r_{n+i}\neq 0\}.
\]
For an $s$-tuple $\bop$ of operators and $\bar n\in\IN^{s}$,  we let
$\op_{\bar n}(\ell)=\sum_{j=1}^{s}\op_{j}(\ell+n_{j})$.

\begin{remark}\label{remark-control}
  Let $\op_{1},\ldots,\op_{s}$ be operators and $z\in\IZ$.
  By Proposition \ref{proposition-op-inhom}, there
  exist $k$ and $\bar{m}_{1},\ldots,\bar{m}_{k}\in \IZ^s$ such that, letting $m_{i}=\min\{m_{i1},\ldots,m_{is}\}$
  and $\bar n_{i}=\bar m_{i}-m_{i}$:
  \begin{quotation}
    \noindent $\bar{\ell}\in N_{\bop,z}^{\nd}$ if and only if for some $i\in[k]$,
    $\ell_{1}+m_{i}\in N_{\op_{\bar n_{i}},z}^{\circ}$ and $\ell_{j}=\ell_{1}+m_{ij}$ for all
    $j\in[s]$.
  \end{quotation}
\end{remark}
\begin{corr}\label{corollary-finiteness}
  Let $\op_{1},\ldots,\op_{s}$ be operators and $z\in\IZ$. Let $k\in\IN$ and
  $\bar{m}_{1},\ldots,\bar{m}_{k}\in\IZ^{s}$ be given by Proposition
  \ref{proposition-sol-eq}. Then $N_{\bop,z}^{\nd}$ is infinite if and only if $z = 0$ and
  $N_{\op_{\bar{m}_{i}},0}^{\circ}$ is infinite for some $i\in[k]$.
\end{corr}
\begin{proof}
  This follows from Propositions \ref{proposition-op-inhom} and
  \ref{proposition-sol-eq}.
\end{proof}
The following corollary states that operators are ultimately injective
functions, unless $N_{\op,0}$ is infinite.
\begin{corr}\label{corollary-inj}
  Let $Q\in\IZ[X]$. Then exactly one of the following holds:
  \begin{itemize}
  \item $N_{\op_{Q},0}=\IN$;
  \item $N_{(\op_{Q},-\op_{Q}),0}\backslash\{(n,n)\mid n\in\IN\}$ is finite.
  \end{itemize}
\end{corr}
\begin{proof}
  Assume $N_{\op_{Q},0}$ is finite. Let us then show that
  $N_{(\op_{Q},-\op_{Q}),0}\backslash\{(n,n)\mid n\in\IN\}$ is finite. Since
  $N_{\op_{Q},0}$ is finite, $N_{(\op_{Q},-\op_{Q}),0}\backslash
  N_{(\op_{Q},-\op_{Q}),0}^{\nd}$ is finite, so we only need to show that
  $N_{(\op_{Q},-\op_{Q}),0}^{\nd}\backslash\{(n,n)\mid n\in\IN\}$ is finite. By
  Proposition \ref{proposition-sol-eq},
  this amounts to show that for all $k\in\IN^{>0}$, the operator
  $\op_{k}(n)=\op_{Q}(n)-\op_{Q}(n+k)$ is such that $N_{\op_{k},0}$ is finite. Assume on
  the contrary that $N_{\op_{k},0}=\IN$ for some $k\in\IN^{>0}$. This implies that $R$ satisfies a
  linear recurrence relation and in that case $P_{R}$ divides
  $Q(X)(1-X^{k})$. But since $\theta>1$, we must have that $P_{R}$ divides $Q$,
  in contradiction with the fact that $N_{\op_{Q},0}$ is finite.
\end{proof}
As a corollary of Proposition \ref{proposition-sol-eq} and the following result,
we obtain the superstability of $R_{\ind}^{0}$.
\begin{proposition}[{\cite[Proposition 5.9]{conant}}]\label{proposition-succ}
  Let $\str{N}$ be the structure $(\IN,S,S^{-1},0)$, where $S(n)=n+1$,
  $S^{-1}(n+1)=n$ and $S^{-1}(0)=0$. Then
  $\str{N}^{1}$ is superstable of $U$-rank $1$.
\end{proposition}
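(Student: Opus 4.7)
The plan is to establish $\lambda$-stability of $\str{N}^{1}$ for all $\lambda \geq 2^{|T|} = 2^{2^{\aleph_{0}}}$ by an explicit count of $1$-types. The structure $\str{N}=(\IN,S)$ has a rigid backbone: $0$ is $\emptyset$-definable as the unique element not in the image of $S$, each standard $n \in \IN$ equals $S^{n}(0)$, and the singleton predicate $P_{\{n\}}$ isolates $n$. In any elementary extension $\str{M} \succ \str{N}^{1}$, the non-standard elements organize into disjoint $\IZ$-chains under $S$, because $S$ is an injective successor with no fixed points.

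The first key observation I would establish is that the named predicates respect all finite boolean operations: sentences of the form $\forall x\,(P_{A}(x) \wedge P_{B}(x) \leftrightarrow P_{A\cap B}(x))$ and $\forall x\,(\neg P_{A}(x) \leftrightarrow P_{\IN \setminus A}(x))$ are true in $\str{N}^{1}$ and hence in every model of its theory. Consequently, for every element $x$ of any model, $\mathcal{U}_{x} = \{A \subseteq \IN : \str{M} \models P_{A}(x)\}$ is an ultrafilter on $\mathcal{P}(\IN)$. Since $P_{\{n\}}(x) \leftrightarrow (x = n)$, $\mathcal{U}_{x}$ is principal exactly when $x$ is standard and non-principal otherwise. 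A compactness argument gives the converse: each non-principal ultrafilter on $\mathcal{P}(\IN)$ is realized by some non-standard element in a suitable elementary extension.

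Next I would enumerate $\tp{b/A}$ for $b$ in an elementary extension and $|A| = \lambda$, dividing into three cases. If $b$ is standard, there are $\aleph_{0}$ possibilities, one per $n \in \IN$. If $b$ is non-standard and lies in the same $\IZ$-chain as some $a \in A$, then $b = S^{n}(a)$ for a unique $n \in \IZ$ and the pair $(a,n)$ determines $\tp{b/A}$ (note $\mathcal{U}_{b}$ is the $n$-shift of $\mathcal{U}_{a}$), yielding at most $\aleph_{0} \cdot |A| = |A|$ types. Otherwise $b$ lies in a fresh $\IZ$-chain, and by the previous paragraph $\tp{b/A}$ is then determined solely by $\mathcal{U}_{b}$, giving at most $2^{2^{\aleph_{0}}}$ types.

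Summing, $|S_{1}(A)| \leq \aleph_{0} + |A| + 2^{2^{\aleph_{0}}} \leq \lambda$ whenever $\lambda \geq 2^{2^{\aleph_{0}}} = 2^{|T|}$, which is superstability. The extension to $n$-types is a routine induction on how the elements of the tuple share $\IZ$-chains with each other and with $A$. The main obstacle I anticipate is the ultrafilter characterization — in particular, verifying via compactness that every non-principal ultrafilter on $\mathcal{P}(\IN)$, together with the extra axioms placing $x$ in a chain disjoint from $A$, yields a consistent type over $A$.
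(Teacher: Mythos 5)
The paper gives no proof of this proposition at all --- it is imported verbatim from Conant \cite[Proposition 5.9]{conant} --- so any self-contained argument is necessarily a different route. Your outline is the right one: decompose models of $\Th(\str{N}^{1})$ into the pointwise-definable standard copy of $\IN$ plus nonstandard $\IZ$-chains, package the monadic data at a point $x$ as the ultrafilter $\mathcal{U}_{x}=\{A\subseteq\IN:\;\models P_{A}(x)\}$, and count $1$-types over a parameter set of size $\lambda\ge 2^{2^{\aleph_{0}}}$. The first two cases of your count are fine (a nonstandard $b$ in the chain of some $a\in A$ satisfies $b=S^{n}(a)$ for a unique $n\in\IZ$ and is definable over $a$, since $S$ is a bijection on each nonstandard chain), and restricting to $1$-types suffices for $\lambda$-stability.

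The genuine gap is in the third case: the assertion that for $b$ in a fresh chain $\tp{b/A}$ is determined by $\mathcal{U}_{b}$ does \emph{not} follow from ``the previous paragraph'', which only establishes that $\mathcal{U}_{b}$ is an ultrafilter, i.e.\ records the atomic monadic data at $b$ itself. A complete type also contains quantified formulas and the monadic data along the entire chain of $b$, so this determination claim is essentially the content of the proposition and must be argued. It is true, and the missing step is short: in a sufficiently homogeneous model, if $b,b'$ lie in fresh chains with $\mathcal{U}_{b}=\mathcal{U}_{b'}$, the map exchanging $S^{n}(b)$ with $S^{n}(b')$ for all $n\in\IZ$ and fixing everything else is an automorphism over $A$; it preserves each $P_{C}$ because $\mathcal{U}_{S^{n}(b)}$ is determined by $\mathcal{U}_{b}$ via the sentences $\forall x\,\left(P_{C}(S^{m}(x))\leftrightarrow P_{D}(x)\right)$ with $D=\{k\mid k+m\in C\}$, which hold in $\str{N}^{1}$ and transfer to every model. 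With that lemma the bound $\aleph_{0}+|A|+2^{2^{\aleph_{0}}}\le\lambda$ on the number of $1$-types is correct. Note finally that the obstacle you single out --- realizability of every non-principal ultrafilter by compactness --- is not actually needed: for $\lambda$-stability one only needs the \emph{upper} bound on the number of types, not that every ultrafilter occurs.
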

\begin{corr}\label{corollary-ind-struct}
  Let $R$ be a regular sequence. Then $R_{\ind}^{0}$ is definably interpreted in
  $\str{N}$.
\end{corr}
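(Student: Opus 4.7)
The plan is to construct the interpretation along the bijection $\pi\colon\IN\to R\colon n\mapsto r_{n}$, using $\IN$ (the domain of $\str{N}$) as the interpreted copy of $R$. It then suffices, for each $\L_{\ind}$-predicate $R_{\phi,k}$ coming from a boolean combination $\phi(\bar{x})$ of homogeneous equations, to exhibit an $\L_{S}$-formula defining the pull-back $\pi^{-k}(R_{\phi,k}(R))\subseteq\IN^{k}$. Since pull-back commutes with boolean connectives, the problem reduces to a single homogeneous equation $\phi(\bar{x})\colon a_{1}x_{1}+\cdots+a_{n}x_{n}=0$ with each $a_{j}\neq 0$, and I would proceed by induction on the arity $n$.

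The inductive step splits $\phi(\IZ^{n})\cap R^{n}$ into degenerate and non-degenerate parts in the sense introduced before Proposition~\ref{proposition-sol-eq}. The degenerate part is a finite union indexed by the pattern of equal coordinates and by the proper subset $I\subsetneq[n]$ on which a sub-sum vanishes. Equalities of the form $x_{i}=x_{j}$ are trivially $\L_{S}$-definable, and collapsing them reduces the equation to one of strictly smaller arity. For each proper $I$, the vanishing of $\sum_{j\in I}a_{j}x_{j}$ forces the vanishing of the complementary sub-sum, so this piece is a conjunction of two homogeneous equations in fewer variables, handled by the inductive hypothesis applied coordinatewise.

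The non-degenerate part is where Propositions~\ref{proposition-sol-eq} and~\ref{proposition-operators} do the work. Taking the unary operators $\op_{j}(m)=a_{j}r_{m}$, Proposition~\ref{proposition-sol-eq} produces finitely many shape tuples $\bar{m}_{1},\dots,\bar{m}_{k}$ with $m_{i1}=0$ such that every non-degenerate solution is of the form $(l,\,l+m_{i2},\dots,l+m_{in})$ for some $i\in[k]$ and some $l\in\IN$ satisfying $\op_{\bar{m}_{i}}(l)=0$. Each offset $l\mapsto l+m_{ij}$ (and its partial inverse when $m_{ij}<0$) is $\L_{S}$-definable as an iterate of $S$. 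Moreover, Proposition~\ref{proposition-operators} guarantees that the parameter set $A_{i}=\{l\in\IN:\op_{\bar{m}_{i}}(l)=0\}$ is either finite or cofinite in $\IN$; and in $\str{N}=(\IN,S)$ the element $0$ is definable as the unique element with no $S$-predecessor while each $n\in\IN$ is then definable as $S^{n}(0)$, so finite and cofinite subsets of $\IN$ are $\L_{S}$-definable. The non-degenerate locus therefore assembles into a finite disjunction of $\L_{S}$-formulas.

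The main difficulty I expect is purely organizational: one must carefully combine the formulas coming from the degenerate pieces (each obtained by identifying coordinates and then invoking the inductive hypothesis on strictly lower arity) with the shape-plus-parameter description of the non-degenerate locus, and keep track of the fact that the ``parameter set'' $A_{i}$ may legitimately be either finite or cofinite. Once this bookkeeping is carried out, the resulting finite disjunction of $\L_{S}$-formulas defines $\pi^{-n}(\phi(\IZ^{n})\cap R^{n})$ in $\str{N}$, which, collated over all boolean combinations $\phi$ of homogeneous equations, establishes the desired interpretation of $R_{\ind}^{0}$ in $\str{N}$.
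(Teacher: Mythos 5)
Your proposal is correct and follows essentially the same route as the paper: interpret the domain as $\IN$ via $n\mapsto r_{n}$, reduce to a single homogeneous equation, separate degenerate from non-degenerate solutions, and describe the non-degenerate locus via the finitely many shape tuples of Proposition \ref{proposition-sol-eq} together with the finite-or-cofinite parameter sets supplied by Proposition \ref{proposition-operators}, all $\L_{S}$-definable in $(\IN,S)$. The only divergence is bookkeeping: you handle degeneracies by induction on arity, while the paper writes the decomposition as a single union over partitions of $[n]$ (your inductive treatment is, if anything, the more careful of the two for nested degeneracies).
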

\begin{proof}
  We interpret the domain of $R_{\ind}^{0}$ as $\IN$. Let
  $a_{1},\ldots,a_{s}\in\IZ\backslash\{0\}$. We need to interpret in $\str{N}$
  the set of $s$-tuples of elements in $R$ that satisfy the equation
  $a_1x_1+\cdots+a_sx_s=0$.  For all $i\in[s]$, let $\op_{i}$ be the operator
  $n\mapsto a_{i}r_{n}$. We interpret
  $\{\bar{x}\in R^{s}\mid a_{1}x_{1}+\cdots+a_{n}x_{n}=0\}$ as $N_{\bar{\op},0}$ in
  $\str{N}$. Let us show that $N_{\bar{\op},0}$ is definable in $\str{N}$. As
  explained at the beginning of Section \ref{section-induced}, the set
  \[N_{\bar{\op},0}=\bigcup_{\bar{I}\in\mathfrak{P}([s])}N_{\bar{\op},0,\bar{I}}^{\text{nd}}.\]
  So we need only to show that $N^{\text{nd}}_{\bar{\op},0,\bar{I}}$ is
  definable in $\str{N}$ for all $\bar{I}\in\mathfrak{P}([s])$. We focus on the
  case $\bar{I}=([s])$, the general case being similar. By Remark
  \ref{remark-control}, to show that $N_{\bop,0}$ is definable, we only need to
  show that $N_{\op_{\bar{n}_{i}},0}^{\circ}$ is definable for all
  $i\in[k]$. But as $N_{\op_{\bar{n}_{i}},0}^{\circ}$ is either empty or
  cofinite in $N_{\op_{\bar{n}_{i}},0}$, we only need to show that the latter is
  definable. But
  we know that, by Proposition \ref{proposition-operators}, the set
  $N_{\op_{\bar{n}_{i}},0}$
  is either finite or $\IN$, hence definable.
\end{proof}
\subsection{Every $\L_{R}$-formula is bounded}\label{section-bounded}
Throughout this section, we will use the following notations. Let $\bop$ be a tuple of $k$ operators. Define $\Im{\bop}$ as
\[\{a\in\IZ\mid a=\op_{1}(n_{1})+\cdots+\op_{k}(n_{k})\text{ for some }\bar{n}\in\IN^{k}\}.\]
Notice that $\Im{\bop}=\{a\in\IZ\mid N_{\bop,a}\neq\emptyset\}$. Similarly define $\Imp(\bop)$ as $\Im{\bop}\cap\IN$.\\
\indent This section is devoted to the proof of the following theorem.
\begin{theorem}\label{theorem-cover-z}
  Let $a,d\in\IN$, $d>0$. Then, the set $a+d\IN$ cannot be covered by finitely
  many sets of the form
  $z+\Imp(\bop)$, where
  $\bop$ is a tuple of $k$ operators, $k\in\IN$ and $z\in\IZ$.
\end{theorem}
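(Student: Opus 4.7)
The plan is to invoke the partition-regularity of piecewise syndetic sets: $a + d\IN$ is syndetic (with gap $d$) and therefore piecewise syndetic, while any finite union of non-piecewise-syndetic sets remains non-piecewise-syndetic (a classical fact from combinatorial number theory). Hence it suffices to show that every set $S = \{z + \op_1(n_1) + \cdots + \op_k(n_k) : \bar n \in \IN^k\}$ of the given form is not piecewise syndetic.

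I would prove this via the stronger assertion that $S$ has upper Banach density zero in $\IZ$; this suffices, since a piecewise syndetic set meets arbitrarily long intervals with bounded gaps and hence has positive upper Banach density. The concrete target is a uniform bound $|S \cap [M, M+L]| \le C_k (\log L)^k$, which is $o(L)$ and so forces density zero.

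The bound is established by induction on $k$. The base case $k = 0$ is trivial. For the inductive step, consider $v \in S \cap [M, M+L]$ with preimage $\bar m$: if $\bar m$ is non-degenerate, Proposition \ref{proposition-sol-eq} puts it in the form $(l, l + m_{i,2}, \ldots, l + m_{i,k})$ with $\op_{\bar m_i}(l) = v - z$ for a single-variable operator $\op_{\bar m_i}$; the growth rate $\theta > 1$ (or super-exponential if $\theta = \infty$), combined with Proposition \ref{proposition-operators}, implies that each such operator image meets $[M, M+L]$ in at most $O(\log L)$ points uniformly in $M$, for a total contribution of $O((\log L)^k)$ summed over the relevant patterns. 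If instead $\bar m$ is degenerate---either two indices coincide or some proper sub-sum of operators vanishes---then $v$ lies in the image of a sum with strictly fewer operators, to which the inductive hypothesis applies to yield $O((\log L)^{k-1})$. Summing the finitely many cases gives the claimed bound.

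The main obstacle is the combinatorial bookkeeping in the inductive step: enumerating the finitely many degeneracy reductions (coincidences of indices, vanishing sub-sums, and vanishing combined operators handled via Proposition \ref{proposition-operators}), verifying each produces a strictly smaller instance of the same form, and preserving the uniformity in $M$ needed for density zero, all while accommodating the fact that the patterns $\bar m_i$ from Proposition \ref{proposition-sol-eq} a priori depend on the target value $v - z$.
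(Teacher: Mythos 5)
Your global reduction coincides with the paper's: $a+d\IN$ is piecewise syndetic, and Brown's Lemma (partition regularity of piecewise syndetic sets) reduces the theorem to showing that each set $S=\{z+\op_{1}(n_{1})+\cdots+\op_{k}(n_{k})\,|\,\bar{n}\in\IN\}$ is not piecewise syndetic. From there you diverge, aiming at the stronger claim that $S$ has upper Banach density zero via a bound $|S\cap[M,M+L]|\le C_{k}(\log L)^{k}$ uniform in $M$. Your degenerate-case reductions are fine (coinciding indices merge two operators into one, a vanishing sub-sum deletes operators), and so is the base case: a nontrivial operator satisfies $|\op(n+1)/\op(n)|\to\theta>1$ or $\infty$, so its image meets any interval of length $L$ in $O(\log L)$ points.

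The gap is in the non-degenerate case of the inductive step, and it is exactly the issue you flag at the end without resolving. Proposition \ref{proposition-sol-eq} produces finitely many patterns $\bar{m}_{1},\ldots,\bar{m}_{k}$ only for a \emph{fixed} right-hand side $z$; its proof collapses for a family of right-hand sides, because when $u\neq 0$ the contradiction there is that $\sum_{j}\op_{j}(l_{i}+m_{ij})\sim u\,r_{l_{i}+m_{is}}\to\infty$ cannot equal the constant $z$ --- but it can perfectly well equal a varying $v_{i}\in[M,M+L]$ once $M$ is large. Concretely, on $R=(2^{n})$ take $\op_{1}(n)=2r_{n}$ and $\op_{2}(n)=-r_{n}$: the non-degenerate solutions of $2^{n_{1}+1}-2^{n_{2}}=v$ realize gaps $n_{2}-n_{1}$ ranging over roughly $\log_{2}v$ values, so the set of patterns needed to cover all $v\in[M,M+L]$ grows like $\log M$ and is not bounded in terms of $L$. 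Hence ``total contribution $O((\log L)^{k})$ summed over the relevant patterns'' is unjustified: the number of patterns is not under control. (Your counting bound may still be true --- in this example one recovers $O((\log L)^{2})$ by first confining the dominant term to $[M,2(M+L)]$, which pins it to $O(\log L)$ choices, and then counting the remaining term in an interval of length $L$ --- but that is a genuinely different induction, on the largest summand, and you would have to carry it out in general.) The paper sidesteps the problem entirely: it never needs a density estimate, and it only ever invokes Proposition \ref{proposition-sol-eq} with fixed right-hand sides (the finitely many gap values $i\in[d]$), decomposing the putative syndetic part of $\Im{\bar{\op}}$ into the sets $X_{i}$ of elements at distance exactly $i$ from another element of the image, and showing each $X_{i}$ is a finite union of translates of images of strictly shorter tuples of operators, to which the induction hypothesis and Brown's Lemma apply.
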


Recall that a set $A\subset\IN$ is called \emph{piecewise syndetic} if there
exists $d\in\IN^{>0}$ such that for all $k\in\IN$, there exists
$a_{1}<\cdots<a_{k}\in A$ such that $a_{i+1}-a_{i}\le d$ for all $i\in[k-1]$. A
key property of piecewise syndetic sets is the so-called Brown's Lemma.
\begin{theorem}[Brown's Lemma {\cite[Theorem 10.37]{ramsey-integers}}]
  Let $A\subset \IN$ be piecewise syndetic. If $A=A_{1}\cup\cdots\cup A_{n}$,
  then there exists $i\in[n]$ such that $A_{i}$ is piecewise syndetic.
\end{theorem}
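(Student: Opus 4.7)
The plan is to prove the lemma by induction on $n$, reducing to the two-color case. For the base case $n = 2$, I will show: if $A = A_1 \cup A_2$ is piecewise syndetic with witness $d$ and $A_1$ is not piecewise syndetic, then $A_2$ is piecewise syndetic. Since $A_1$ is not piecewise syndetic, for every $d' \in \IN$ the length of runs in $A_1$ with gaps at most $d'$ is bounded; in particular, let $M$ denote the (finite) supremum of lengths of such runs for $d' = d$.

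Now fix $k$ and pick a run $a_1 < \cdots < a_k$ in $A$ with gaps $\leq d$. Any maximal block of consecutive $A_1$-elements within this run, say $a_i, \ldots, a_j$, forms a run in $A_1$ with consecutive gaps $\leq d$ and therefore has length at most $M$. Consequently, in any $M + 1$ consecutive $a_i$'s at least one belongs to $A_2$. Let $c_1 < \cdots < c_m$ enumerate the $A_2$-elements among the $a_i$. A block count yields $m \geq (k - M)/(M+1)$, which tends to infinity with $k$. Moreover, between consecutive $c_j, c_{j+1}$ there are at most $M$ intermediate $A_1$-elements in the $a$-run, so the jumps satisfy $c_{j+1} - c_j \leq (M+1) d$. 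Thus $A_2$ contains runs of arbitrary length with gaps $\leq (M+1) d$, proving that $A_2$ is piecewise syndetic with witness $(M+1) d$.

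For the inductive step, write $A = A_1 \cup (A_2 \cup \cdots \cup A_n)$ and apply the two-color case to the partition $A = A_1 \cup B$ with $B := A_2 \cup \cdots \cup A_n$: either $A_1$ is already piecewise syndetic, or $B$ is, and in the latter case the inductive hypothesis applied to the $(n-1)$-partition $B = A_2 \cup \cdots \cup A_n$ yields some piecewise syndetic $A_i$ with $i \geq 2$. The main technical obstacle lies in the two-color step: one must translate the \emph{qualitative} failure of piecewise syndeticity of $A_1$ into a \emph{quantitative} bound on the gaps in the $A_2$-subsequence extracted from a long $A$-run. The finite bound $M$ on the length of $A_1$-blocks (tuned to the specific witness $d$ for $A$) is exactly what provides the uniform gap bound $(M+1) d$ in $A_2$ needed to conclude piecewise syndeticity.
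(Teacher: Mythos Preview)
The paper does not supply a proof of Brown's Lemma; it simply quotes the result from \cite{ramsey-integers} and uses it as a black box in the proof of Proposition~\ref{proposition-image-op}. So there is no ``paper's own proof'' to compare against.

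That said, your argument is a correct and fully self-contained elementary proof. The key step is clean: negating piecewise syndeticity of $A_1$ at the \emph{specific} gap bound $d$ witnessing $A$ gives a uniform bound $M$ on the length of $A_1$-blocks inside any $d$-run of $A$, and this immediately converts into the uniform gap bound $(M+1)d$ for the extracted $A_2$-subsequence. The block-counting estimate $m \ge (k-M)/(M+1)$ and the gap estimate $c_{j+1}-c_j \le (M+1)d$ are both accurate, and the induction to general $n$ is routine. One cosmetic remark: you might state explicitly that $M$ is finite because the failure of piecewise syndeticity of $A_1$ means precisely that for the chosen $d$ there is some $k_0$ with no $d$-run in $A_1$ of length $k_0$; this is implicit in your phrasing but worth one extra clause for clarity.
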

In the next proposition, we show that the image of arbitrary linear combinations
of operators is not piecewise syndetic.
\begin{proposition}\label{proposition-image-op}
  Let $\bar{\op}$ be a tuple of $k$ operators. Then $\Imp(\bop)$ is not piecewise
  syndetic.
\end{proposition}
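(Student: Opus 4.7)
My approach is to prove a uniform polylogarithmic bound on the local density of
$S := \{\op_1(n_1) + \cdots + \op_k(n_k) : \bar{n} \in \IN^k\}$. Precisely, I aim to show that there exist a constant $A$ and an integer $K$ (depending only on the operators) such that
\[
|S \cap [x, x+L]| \leq A (1 + \log(L+1))^{K}
\]
for every $x \in \IZ$ and every $L \geq 1$; one may take $K = \sum_{i=1}^k (d_i + 1)$. If $S$ were piecewise syndetic with gap bound $d$, then for every $M$ some interval of length $(M-1)d$ would contain at least $M$ elements of $S$, yielding $M \leq A(1 + \log((M-1)d+1))^K$ for arbitrary $M$, which is absurd for $M$ sufficiently large. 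So the uniform estimate forces $S$ to fail piecewise syndeticity.

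To establish the estimate I first rewrite each element of $S$. Expanding $\op_i(n_i) = \sum_{j=0}^{d_i} a_{i,j} r_{n_i+j}$ and regrouping terms by the value of $n_i + j$, every element of $S$ admits a representation $\sum_{\ell \in E} c_\ell r_\ell$ with support of size $|E| \leq K$ and coefficients satisfying $|c_\ell| \leq C_0 := \max_i \sum_j |a_{i,j}|$. I then proceed by induction on $K$, peeling off the top term $c_{\ell^*} r_{\ell^*}$ (where $\ell^* = \max E$) of such a sum lying in $[x, x+L]$. Since the remaining terms have absolute value at most $C_0 K \, r_{\ell^* - 1}$, the constraint
\[
|c_{\ell^*} r_{\ell^*} - x| \leq L + C_0 K\, r_{\ell^* - 1},
\]
combined with the geometric growth $r_{\ell^* - 1}/r_{\ell^*} \leq 1/\lambda$ (valid for $\ell^*$ sufficiently large and any $\lambda \in (1, \theta)$ when $\theta \in \IR$, and any $\lambda$ when $\theta = \infty$), restricts $(c_{\ell^*}, \ell^*)$ to at most $O(\log(L+1))$ possibilities, uniformly in $x$. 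The residual sum has support of size $\leq K-1$ and still lies in an interval of length $O(L)$, so the induction hypothesis closes the estimate.

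The main technical obstacle lies in the uniform-in-$x$ character of the density bound. When $|x|$ is much larger than $L$, a naive bound $r_{\ell^*} = O(|x| + L)$ would introduce an $x$-dependence into the count of top terms; the remedy is to observe that in this regime $c_{\ell^*} r_{\ell^*}$ must approximate $x$ to within $O(L + r_{\ell^*}/\lambda)$, so that the geometric growth of $R$ pins $\ell^*$ to only $O(1)$ values for each $|c_{\ell^*}| \in \{1, \ldots, C_0\}$. Combining this with the routine case $|x| \lesssim L$, in which $|c_{\ell^*} r_{\ell^*}| = O(L)$ directly forces $\ell^* = O(\log L)$, yields the uniform $O(\log(L+1))$ count at each peeling step, and iterating $K$ times gives the desired polylogarithmic density bound. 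One minor simplification is to reduce to the case where every $\op_i$ is \emph{non-trivial} in the sense that $\op_i(n) \neq 0$ for all large $n$: trivial operators (whose existence depends on $\theta$ being algebraic, via the analysis preceding Proposition~\ref{proposition-sol-eq}) contribute only finitely many values and may be absorbed into $z$.
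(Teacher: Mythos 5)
Your overall strategy (a uniform local density bound that contradicts piecewise syndeticity) is genuinely different from the paper's, which argues by induction on $k$ using Brown's Lemma and Proposition \ref{proposition-sol-eq}; but the key counting step in your peeling argument fails when $\theta\in\IR$. After regrouping, the tail $\rho=\sum_{\ell\in E,\,\ell<\ell^*}c_\ell r_\ell$ satisfies only $|\rho|\le C_0K\,r_{\ell^*-1}$, and since $r_{\ell^*-1}\ge r_{\ell^*}/(\theta+\varepsilon)$ for large $\ell^*$, this bound is of the \emph{same order} as $r_{\ell^*}$ and typically exceeds $|c_{\ell^*}|\,r_{\ell^*}$ (already when $|c_{\ell^*}|=1$ and $C_0K\ge\theta$). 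The only upper bound on $r_{\ell^*}$ extractable from $|c_{\ell^*}r_{\ell^*}-x|\le L+C_0K\,r_{\ell^*-1}$ is $\left(|c_{\ell^*}|-C_0K/\lambda\right)r_{\ell^*}\le |x|+L$, which is vacuous when the coefficient in front is negative: \emph{every} sufficiently large $\ell^*$ is admitted, not $O(\log(L+1))$ of them, and the induction never gets started. Concretely, for the Fibonacci sequence the element $r_{n+2}-2r_{n+1}=-r_{n-1}$ of $\Im{(\op_1,\op_2)}$ (with $\op_1(n)=r_{n+2}$, $\op_2(m)=-2r_m$) has top term $r_{n+2}$ but total of order $r_{n-1}$, so membership of the total in a short interval constrains $\ell^*=n+2$ not at all via your crude tail bound.

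The decisive symptom is that, in the finite-$\theta$ case, your argument uses only the growth estimate $r_{\ell-1}/r_\ell\le 1/\lambda$ and never the hypothesis that $R$ satisfies a recurrence whose characteristic polynomial is the minimal polynomial of $\theta$. That hypothesis is essential: for $r_n=2^n+n$ (which has $r_{n+1}/r_n\to 2$ but is not regular) the non-trivial operator $\op(n)=2r_n-r_{n+1}=n-1$ has image $\{-1,0,1,2,\dots\}$, which is piecewise syndetic; an argument valid under only the hypotheses you actually invoke would therefore prove a false statement. (Your reduction to non-trivial operators does not help here, since this $\op$ is non-trivial.) To repair the count one must group the support of $\sum_{\ell}c_\ell r_\ell$ into blocks with bounded internal gaps and, for each block, invoke the dichotomy of Proposition \ref{proposition-operators}: either the block polynomial vanishes at $\theta$, in which case regularity forces the block to contribute exactly $0$ for all large indices, or it does not, in which case the block contributes a quantity comparable to $r_{\ell^*}$ and the pinning succeeds. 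That block analysis, including the case distinction on which inter-cluster gaps remain bounded along a putative infinite family of solutions, is precisely the content and mechanism of Proposition \ref{proposition-sol-eq}, so the gap is not cosmetic.
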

Before giving a proof of Proposition \ref{proposition-image-op}, let us show how
it is used to prove Theorem \ref{theorem-cover-z}.
\begin{proof}[Proof of Theorem \ref{theorem-cover-z}]
  Since $a+d\IN$ is piecewise syndetic, if it were covered by sets of the form
  $z+\Imp(\bop)$, then one of
  them would also be piecewise syndetic, by Brown's Lemma. But this would imply
  that a set of the form
  $\Imp(\bop)$ is piecewise
  syndetic since any translate of a piecewise syndetic set is again piecewise
  syndetic. This contradicts Proposition \ref{proposition-image-op}.
\end{proof}
We will need the following lemma.
\begin{lemma}\label{lemma-synd}
  Let $\op_{1},\ldots,\op_{k}$ be operators and $e\in\IN^{>0}$. Let $X_{e}$ be the
  set $\{a\in\Imp(\bar{\op})\mid \exists a'\in\Imp(\bar{\op}), |a-a'|=e\}$. Then
  there exist a finite set $Z$ of integers such that
  \[X_{e}\subset\bigcup_{z\in
      Z}\bigcup_{I\subsetneq[k]}(z+\Imp(\bar{\op}_{I})).\]
\end{lemma}
\begin{proof}
  We first identify $Z$. Let $a\in X_{e}$. By definition, there is $a'\in\Imp(\bop)$ such that
  $e=a-a'$ or $e=a'-a$, that we shorten by $e=\pm(a-a')$. Since both $a$ and $a'$ are in $\Imp(\bop)$, we can find $\bar{n},\bar{n}'\in\IN^{k}$ such that
  \[a=\sum_{i=1}^{k}\op_{i}(n_{i})\text{  and  }a'=\sum_{i=1}^{k}\op_{i}(n'_{i}).\]
  Since $e=\pm(a-a')$ we can find $I,I'\subset[k]$ such that
  \[e=\pm\left(\sum_{i\in I}\op_{i}(n_{i})-\sum_{i\in I'}\op_{i}(n'_{i})\right),\]
  $(\bar{n}_{I},\bar{n}'_{I'})\in N^{\nd}_{\bop_{I}\cup-\bop_{I'},e}\cup
  N^{\nd}_{-\bop_{I}\cup\bop_{I'},e}$ and
  \[0=\sum_{i\notin I}\op_{i}(n_{i})-\sum_{i\notin I'}\op_{i}(n'_{i}).\]
  We thus let $Z$ be the set
  \[\left\{\sum_{i\in I}\op_{i}(n_{i}),\left.\sum_{i\in I'}\op_{i}(n'_{i})
        \,\right|\, I,I'\subset[k], (\bar{n}_{I},\bar{n}'_{I'})\in
      N^{\nd}_{\bop_{I}\cup-\bop_{I'},e}\cup
      N^{\nd}_{-\bop_{I}\cup\bop_{I'},e}\right\}\cup\{0\}.\]
  By Corollary \ref{corollary-finiteness}, we have that the sets
  $N^{\nd}_{-\bop_{I}\cup \bop_{I'},e}$ and $N^{\nd}_{\bop_{I}\cup-\bop_{I'},e}$
  are finite. Hence $Z$ is finite.\\
  \indent Let us show that
  \[a\in\bigcup_{z\in
      Z}\bigcup_{I\subsetneq[k]}(z+\Imp(\bar{\op}_{I})).\]
  We distinguish three cases.
  \begin{enumerate}
  \item $I=[n]$. In that case, $a\in Z$.
  \item $\emptyset\neq I\subsetneq[n]$. In that case, $[n]\setminus I$ is a
    proper subset of $[n]$ and
    \[a=z+\sum_{i\in [n]\setminus I}\op_{i}(n_{i}),\, z=\sum_{i\in I}\op_{i}(n_{i})\in
      Z.\]
  \item $I=\emptyset$. Since $e>0$, we have that $I'\neq\emptyset$. Now if
    $I'=[n]$, we have $a=0\in Z$. So let us assume that $I'\subsetneq[n]$. In
    that case, $a=\sum_{i\in[n]\setminus I'}\op_{i}(n'_{i})$. Since
    $[n]\setminus I'$ is a proper subset of $[n]$, $a$ has the required form.\qedhere
  \end{enumerate}
\end{proof}
We now prove Proposition \ref{proposition-image-op} by induction on the length of the tuple $\bop$.
\begin{proof}[Proof of Proposition \ref{proposition-image-op}]
  Let $\op$ be an operator. By Lemma \ref{lemma-synd}, we have that $X_{e}$ is
  finite for all $e\in\IN^{>0}$. This implies that $\Imp(\op)$ cannot be piecewise
  syndetic.\\
  \indent Let $k>1$ and assume that the proposition holds for all tuple
  $\bar{\op}$ of length $\le k$. Let $\op_{1},\ldots,\op_{k+1}$ be operators
  such that $\Imp(\bar{\op})$ is infinite. Suppose, towards a contradiction that
  $\Imp(\bar{\op})$ is piecewise syndetic. Assume $d\in\IN^{>0}$ witnesses the
  fact that $\Imp(\bar{\op})$ is piecewise syndetic. Recall that we defined
  $X_{e}$ as $\{a\in\Imp(\bar{\op})\mid \exists a'\in\Imp(\bar{\op}), |a-a'|=e\}$.
  Even though $X_{1}\cup\cdots\cup X_{d}$ may not equal $\Imp(\bar{\op})$, it is
  this subset that will play a key role in the rest of the proof, as it is the
  ``syndetic part of $\Imp(\bar{\op})$ with respect to $d$''. Indeed, the set
  $X_{1}\cup\cdots\cup X_{d}$ is itself piecewise syndetic so that by Brown's
  Lemma, there exists $i\in[d]$ such that $X_{i}$ is also piecewise
  syndetic. But by lemma \ref{lemma-synd} we know that $X_{i}$ is contained in a
  finite union of sets of the form $z+\Imp(\bar{\op}')$, where $\bar{\op}'$ is
  of length $\le k$. But this implies, by Brown's Lemma and the fact that a
  set containing a piecewise syndetic set is itself piecewise syndetic, the existence of a
  piecewise syndetic set of the form $z+\Imp(\bar{\op}')$, where $\bar{\op}'$ is
  of length $\le k$. This contradicts our induction hypothesis. So
  $\Imp(\bar{\op})$ is not piecewise syndetic, which is what we wanted.
\end{proof}
\begin{corr}\label{corollary-bounded}
  Let $R$ be enumerated by regular sequence. Then $R$ is bounded.
\end{corr}
\begin{proof}
  The proof follows \cite[Lemma 3.4 and Lemma 3.5]{Pal-Sk} and is based on \cite[Proposition 2.1]{Cas-Zieg}. That proposition states, for an $\L$-structure $\str{M}$ and a small subset $A$,
  that $A$ is bounded if $\str{M}$ is both stable and nfcp over $A$.
  The proof of
  \cite[Proposition  2.1]{Cas-Zieg} is an induction on the number of quantifiers in
  $\L_{A}$-formulas.
  Palac\'{i}n and Sklinos \cite[Lemma 3.5]{Pal-Sk} noticed that the smallness of $A$ and nfcp over $A$ can be weakened to the following statement:
  \begin{quote}
    $(\star)$  for any $\L$-formula $\phi(\bar{x},y,\bar{z})$,
    there exists $k\in\IN$ such that
    \[\str{M}_{A}\models\forall\bar{x}\left(\left(\forall\bar{z}_{0}\in
          A\ldots\forall\bar{z}_{k}\in A\exists y\bigwedge_{j<
            k}\phi(\bar{x},y,\bar{z}_{j}) \right)\rightarrow\exists
        y\forall\bar{z}\in A\phi(\bar{x},y,\bar{z})\right).\]
  \end{quote}
  Let us now explain how one proceeds in the case of a regular sequence.\\
  \indent First let us show that, for any $\L_{g}$-formula
    $\phi(\bar{x},y,\bar{z})$, any consistent set the form
\[\Gamma(y)=\{\phi(\bar{b},y,\bar{\alpha})\mid \bar{\alpha}\in R^{n}\},\]
where
    $\bar{b}\in\IZ$ and $n$ is the length of the tuple $\bar{z}$, is realized by
    some $c\in\IZ$.
    \par Using quantifier elimination, the consistency of $\Gamma(y)$
    and the properties of the congruence relations, we can assume that $\phi$ is
    of the form
    \[\bigwedge_{i\in I_{1}}t_{i}(\bar{x},y,\bar{z})=0
      \wedge\bigwedge_{i\in I_{2}}t_{i}(\bar{x},y,\bar{z})\neq 0
      \wedge\bigwedge_{i\in I_{3}}D_{n_{i}}(y+t'_{i}(\bar{x},\bar{z})),
    \]
    where $t_{i}(\bar{x},y,\bar{z})$ and $t'_{j}(\bar{x},\bar{z})$ are terms for
    all $i\in I_{1}\cup I_{2}$ and $j\in I_{3}$. We may further assume that
    $I_{1}=\emptyset$ (otherwise it is clear that $y\in \IZ$).  Given
    $i\in I_{2}$, the term $t_{i}(\bar{b},y,\bar{z})$ is equal to
    $m_{i}y+z_{i}+a_{1}z_{1}+\cdots+a_{\ell_{i}}z_{\ell_{i}}$, where
    $m_{i},z\in\IZ$, $\bar{a}_{i}\in\IZ^{\ell_{i}}$ and $\ell_{i}\in\IN$.
    Notice that we may assume that $m_{i}=m$ for all $i\in I_{2}$ (otherwise, we
    multiply the inequation $t_{i}(\bar{x},y,\bar{z})\neq 0$ by $\prod_{j\neq
      i}m_{j}$ for all $i\in I_{2}$). \\
    \indent Thus $\Gamma(y)$ expresses the fact that $y$ is
    in a coset of a subgroup of $\IZ$, say $c+d\IZ$ for some $c,d\in\IN$,
    and $my$ is not in the set
    \[
      X=\bigcup_{i\in I_{2}}
      \{z_{i}+a_{1}z_{1}+\cdots+a_{\ell_{i}}z_{\ell_{i}}\mid \bar{z}\in R^{\ell}\}.
    \]
    But, by Theorem \ref{theorem-cover-z}, $X$ does not
    cover $mc+md\IN$ (use operators of the form $n\mapsto ar_{n}$ to apply the
    theorem). So there is $s\in\IN$ such that $m(c+ds)$ is not in $X$,
    which is what we wanted. This shows \cite[Lemma 3.4]{Pal-Sk} for regular
    sequences.\\
    \indent From this and the fact that $\str{Z}$ has nfcp, we deduce directly that $(\star)$
    holds for regular sequences. Thus $R$ is bounded.
  \end{proof}
  \begin{remark}
    G. Conant and C. Laskowski recently showed in \cite[Theorem 2.8]{conant3} that
    \emph{any} subset of a weakly minimal group is bounded, at the cost of
    adding constants in the language\footnote{A group $G$
      is weakly minimal when its $\{+,0\}$-theory is superstable of $U$-rank $1$.}. This applies in
    particular to $\str{Z}$ and thus eliminates the necessity of Corollary
    \ref{corollary-bounded} in the proof of our Theorem
    \ref{theorem-sup}. However, the material of this section -- specifically
    Proposition \ref{proposition-image-op} -- is needed in
    Section \ref{section-axiomatization} towards the proof of Theorem
    \ref{thm-eq}, namely in Propositions \ref{proposition-op-nstd} and
    \ref{prop-mc-1}.
  \end{remark}
\subsection{Main theorem}
We are now able to prove the main theorem of this section.
\theoremstyle{theorem}
\newtheorem*{theorem-sup}{Theorem \ref{theorem-sup}}
\begin{theorem-sup}
  Let $R$ be enumerated by a regular sequence.  Then $\Th(\str{Z}_{R})$ is superstable of
  Lascar rank $\omega$.
\end{theorem-sup}
\begin{proof}
  By Proposition \ref{proposition-succ} and Corollary
  \ref{corollary-ind-struct}, we get that $R_{\textrm{ind}}^{0}$ is
  superstable. Furthermore, by Corollary \ref{corollary-bounded} $R$ is
  bounded. So, we deduce from Theorem \ref{thm-casanovas} that $\str{Z}_R$ is
  superstable.  Since $\str{Z}_{R}$ is a proper expansion of $\str{Z}$, it must
  have Lascar rank $\ge\omega$ by \cite[Theorem 1]{Pal-Sk}. So what remains to
  be shown is that the rank is
  $\le\omega$.
  For $R=\Pi_{q}$, this is done in \cite[Theorem 2]{Pal-Sk}  and the only property that we need to check here is that the
  $U$-rank of $R_{\ind}$ is $1$. But, by Proposition \ref{proposition-succ}, $\str{N}$ has $U$-rank 1 and by Corollary \ref{corollary-ind-struct},
  $R_{\ind}$ is definably
  interpreted in $\str{N}$.

 For convenience of the reader, we give now a sketch of \cite[Theorem 2]{Pal-Sk}.
 Let $\str{G}\succ\str{Z}_{R}$ be a monster model. Since $\str{G}$ is a superstable
  group, it has a unique generic type $p\in S_{1}^{\L_{R}}(\emptyset)$ in the
  connected component of $\str{G}$.
  Since the type $p$ has maximal $U$-rank (in the
  sense of the $\L_{R}$-theory of $\str{Z}_{R}$), it suffices to show that $U(p)\le\omega$.
  By definition, this amounts to show that any forking extension of $p$ has finite
  $U$-rank.

  First let us show that if $u\in\acl[\L_R]{R(G), B}$, then $U(u/B)<\omega$ $(\dagger)$.
  Let $\bar{c}\in R(G)^{n}$ be such that $u\in\acl[\L_R]{R(G), \bar c}$.
 By Proposition \ref{proposition-succ} and
  Corollary \ref{corollary-ind-struct}, $U(R(G))\le n$. Then by Lascar's inequality $U(u/B)\le
  U(\bar{c}/B)\le n<\omega$.

  Now consider a forking extension of $p$, say $\tp[\L_{R}]{b/B}$ with
   $|B|<|G|$ and suppose that $b\notin\acl[\L_R]{R(G),B}$.

  Let $q=\tp{a/B}$ be a non-forking extension of
  $p$, $U(q)\ge \omega$. By $(\dagger)$ $a\notin
  \acl[\L_R]{R(G),B}$ and hence $a\notin\acl[\L_g]{R(G),B}$. So, in the
  $\L_{g}$-theory of $\str{Z}$, $U(a/R(G),B)=1$. Thus, $\tp[\L_{g}]{a/R(G),B}$
  is a generic type. This is also true for $\tp[\L_{g}]{b/R(G),B}$. So there
  exists $g\in G$ such that
  $\tp[\L_{g}]{b+g/R(G),B}=\tp[\L_{g}]{a/R(G),B}$. This implies, by
  \cite[Corollary 3.7]{Pal-Sk}, $\tp[\L_{R}]{b+g/B}=\tp[\L_{R}]{a/B}$. So
  $\tp[\L_{R}]{b/B}$ is also generic and hence non-forking, a contradiction.
  \end{proof}

As we mentioned in the introduction, there is an overlap between Theorem
\ref{theorem-sup} and \cite[Theorem 7.1]{conant}. We recall that a sequence
$(r_{n})$ is  \emph{geometrically sparse} \cite[Definition 6.2]{conant} if there
exists a sequence $(\lambda_{n})\subset\IR^{\ge 1}$ such that
$X=\{\lambda_{m}/\lambda_{n}\mid n<m\}$ is closed and discrete and
$\sup_{n\in\IN}{|r_{n}-\lambda_{n}|}<\infty$.\\
\indent In our comparison between our theorem and \cite[Theorem 7.1]{conant}, we will
use the following observation about sparse sequences $(r_{n})$ such that
$r_{n+1}/r_{n}\to\theta\in \IR^{>1}$: for such a sequence, there exists
$\tau\in\IR^{\ge 1}$ such that $r_{n}/\theta^{n}\to\tau$. Indeed,
since $\sup_{n\in\IN}{|r_{n}-\lambda_{n}|}<\infty$ and $r_{n+1}/r_{n}\to\theta$,
we have
  \[\lim_{n\to\infty}\frac{\lambda_{n}}{r_{n}}=1\; {\rm and}\;
    \lim_{n\to\infty}\frac{\lambda_{n+1}}{\lambda_{n}}=\theta.\]
But, as $X$ is closed and discrete, if the sequence $(\lambda_{n+1}/\lambda_{n})$ converges, then it is
ultimately constant and so ultimately equal to $\theta$. Hence
$(\lambda_{n}/\theta^{n})$ converges to some limit $\tau\in\IR^{\geq 1}$.\\
\indent Now, let us discuss the overlap between  Theorem
\ref{theorem-sup} and \cite[Theorem 7.1]{conant}:
\begin{itemize}
\item the case where $r_{n+1}/r_n\to\infty$ is completely covered by
  \cite[Theorem 7.1]{conant} (as a consequence of \cite[Proposition
  6.3]{conant});
\item the case where $r_{n+1}/r_{n}\to\theta$ and $\theta$ is algebraic, we
  provide more examples of expansions by recurrence relations than \cite[Theorem
  7.1]{conant}: in addition to our hypotheses, $\theta$ needs to be either a
  Pisot number or a Salem number in order to be geometrically sparse. In fact,
  we can show by direct calculations that the sequence defined by
  $r_{n+2}=5r_{n+1}+7r_{n}$, $r_{1}=1$ and $r_{0}=0$, is regular but not
  geometrically sparse. Indeed, first notice that for all $n\in\IN$,
  $r_{n}=\alpha(\lambda_{+}^{n}-\lambda_{-}^{n})$, where $\alpha=1/\sqrt{53}$,
  $\lambda_{\pm}=(5\pm\sqrt{53})/2$. Then assume that there is a sequence
  $(\lambda_{n})$ such that $\sup\{|r_{n}-\lambda_{n}|\mid n\in\IN\}=k\in\IR$. Let
  $\kappa_{n}=\lambda_{n}/\lambda^{n}_{+}$ and note that
  $\kappa_{n}\to\alpha$. Now we have
  $|r_{n}-\lambda_{n}|=|(\alpha-\kappa_{n})\lambda^{n}_{+}-\alpha\lambda^{n}_{-}|$
  $(*)$. We want to show that $\{\lambda_{m}/\lambda_{n}\mid n\le m\}$ cannot be
  both closed and discrete. Assume towards a contradiction that
  $\{\lambda_{m}/\lambda_{n}\mid n\le m\}$ is closed and discrete. In that case,
  $\lambda_{n+1}/\lambda_{n}$ is ultimately equal to $\lambda_{+}$. Thus
  $\kappa_{n+1}/\kappa_{n}$ is ultimately equal to 1. This in turn implies that
  $\kappa_{n}=\alpha$ for all sufficiently large $n\in\IN$. But in this case,
  $(*)=|\alpha\lambda^{n}_{-}|$ for all sufficiently large $n\in\IN$, in
  contradiction with the boundness of $(*)$;
\item for the case where $r_{n+1}/r_{n}\to\theta$ and $\theta$ is
  transcendental, the overlap is less precise and we did not manage make a clear
  distinction between the two results. However, if $(r_n)$ is
  geometrically sparse,
  that is $\sup_{n\in\IN}{|r_{n}-\lambda_{n}|}<\infty$ for some sequence
  $(\lambda_{n})\subset\IR^{\ge 1}$ such that
  $X=\{\lambda_{m}/\lambda_{n}\mid n<m\}$ is closed and discrete, then the sequence
  $(r_{n}+n)$ is not geometrically sparse but satisfies Theorem
  \ref{theorem-sup}. Since there exists $\tau\in\IR^{\ge 1}$ such that
  $r_{n}/\theta^{n}\to\tau$, we may assume
  $\lambda_{n}=\tau\theta^{n}$. \\
  Assume, towards a contradiction, that there is a sequence $(\lambda'_{n})$
  such
  that $\sup_{n\in\IN}|r_{n}+n-\lambda'_{n}|<\infty$ and
  $X'=\{\lambda'_{m}/\lambda'_{n}\mid n\le m\}$ is closed in discrete. Now let
  $\kappa_{n}=\tau\theta^{n}+n-\lambda'_{n}$. Notice that $(\kappa_{n})$ is
  bounded since we assumed $(r_{n})$ geometrically sparse. So we have that
  $\lambda'_{n+1}/\lambda'_{n}\to\theta$. Since
  $X'$ is closed in discrete, this last sequence
  is ultimately constant: for all $n\in\IN$ sufficiently large,
  \[\theta
    =\frac{\tau\theta^{n+1}+n+1-\kappa_{n+1}}{\tau\theta^{n}+n-\kappa_{n}}.\]
  So, for all sufficiently large $n\in\IN$,
  $n(\theta-1)=1-\kappa_{n+1}+\theta\kappa_{n}$, a contradiction.
\end{itemize}
The assumption on $\theta$ when it is algebraic cannot be removed. Indeed for
all $a,b\in\IN$ with $b>0$, if $R$ is enumerated by $(a+bn)$, then $\str{Z}_{R}$
is unstable\footnote{This is also true for any sequence $(r_{n})$ such that
  there exists $k\in\IN$ such that for all $n\in\IN$, $|r_{n+1}-r_{n}|\le k$.}
and satisfy the linear recurrence $r_{n+2}=2r_{n+1}-r_{n}$.

\section{The theory $T_{R}$}\label{section-axiomatization}
\renewcommand{\Im}{\mathrm{Im}}

In this section, we axiomatize, in a language $\L\supset\L_{g}$, the theory
$T_{R}$ of $\str{Z}_{R}=(\IZ,+,-,0,R)$, where $R(\IZ)$ is enumerated by a
regular sequence $(r_{n})$. We show that $T_{R}$ has quantifier elimination in
$\L$ and has a
prime model (and hence $T_{R}$ is complete). Using this quantifier elimination
result, we then
prove, by means of counting of types, that $T_{R}$ is superstable. As a
consequence we deduce that the $\L_{R}$-theory $\Th(\str{Z}_{R})$ is
superstable. We then close this section with a decidability result and point out
the parallel between our results and those corresponding to expansions of
Presburger arithmetic. We furthermore use a quantifier elimination result of
F. Point to show that expansions of Presburger arithmetic by regular sequences
are NIP.\\
\indent From now on, we fix a infinite set $R(\IZ)\subset\IN$ that is enumerated
by a regular sequence. We know then from the previous section that
\begin{enumerate}
\item \label{bop-cst}(Proposition \ref{proposition-sol-eq}) for all
  $Q_{1},\ldots,Q_{s}\in\IZ[X]$, there is $k=k(\bar{Q})\in\IN$ and a finite set
  $E=E_{\bar{Q}}\subset\IZ^{k}$ such that for all $\bar{\ell}\in\IN^{s}$, if
  $\bar{\ell}\in N_{\bop,0}^{\nd}$ then for some $\bar{m}\in E$,
  $l_{i}=l_{1}+m_{i}$ for all $i\in[k]$, where $\op_{i}=\op_{Q_{i}}$ for all
  $i\in[s]$;
\item \label{op-cst} (Corollary \ref{corollary-inj}) for all $Q\in\IZ[X]$,
  either $N_{\op_{Q},0}=\IN$ or there is $e=e(Q)$ such that for all $n,m>e$,
  if $\op_{Q}(n)=\op_{Q}(m)$ then $n=m$. Let $\triv$ be the
  set of $Q\in\IZ[X]$ such that $N_{\op_{Q},0}=\IN$. Note that $\triv=\{0\}$
  unless $\theta$ is algebraic, in which case $\triv$ is the ideal of $\IZ[X]$
  generated by $P_{R}$.
\end{enumerate}
Our choice of $\L$ will allow us to express the two above properties in a first
order way.
\subsection{Axiomatization and quantifier elimination}\label{sec:axio}
Let us define the language in which we axiomatize $\str{Z}_{R}$. As mentioned in
the introduction, $\L_{g}$ is the language $\{+,-,0,D_{n}\mid n\in\IN^{>1}\}$ and
$\L_{S}$ is the language $\{S,S^{-1},c\}$. These new symbols are interpreted in
$\str{Z}$ as follows: $c$ is interpreted as $r_{0}$, for all $n\in\IN$, $S(r_{n})=r_{n+1}$,
$S^{-1}(r_{n+1})=r_{n}$, $S^{-1}(r_{0})=r_{0}$ and $S(z)=z=S^{-1}(z)$ for all
$z\in\IZ\setminus R(\IZ)$. To each $Q\in\IZ[X]$, we let
$\op=\op_{Q}$ be the $\L_{g}\cup\L_{S}$-term $\sum_{i=0}^{d}n_{i}S^{i}(x)$
($\star$), where $Q(X)=\sum_{i=0}^{d}n_{i}X^{i}$ and $S^{0}(x)=x$. Notice that
such terms are similar to the operators of the previous section: in fact a term
of the form ($\star$) composed with the function $n\mapsto r_{n}$ will be an
operator in the sense of Definition \ref{definition-op}. This explains why we
decided to keep the same notations. Furthermore, in this section, the symbol
$\op$ will always denote a term of the form $\op_{Q}$.\\
\indent We now work in $\L_{g}\cup\{1\}\cup\L_{S}$, where $1$ is a constant
symbol that is interpreted in $\str{Z}_{R}$ by the integer $1$. For $n,m\in\IN$ we
let $\IZ[X]^{n\times m}$ be the set of $n\times m$ matrices with
entries in $\IZ[X]$.\\
\indent Let $[Q]=(Q_{ij})\in\IZ[X]^{n\times m}$ and let
$\phi_{[Q]}(\bar{x},\bar{y})$ be the formula
\[\bigwedge_{i\in[n]}\sum_{j\in[m]}\op_{Q_{ij}}(x_{j})=y_{i}.\]
Notice that, working in $\str{Z}_{R}$, the formula
$\exists \bar{x}\in R\;\phi_{[Q]}(\bar{x},\bar{y})$ expresses the fact that
\[\bigcap_{i\in[n]}N_{\bop_{i},y_{i}}\neq\emptyset.\]
Let $D=\{(P_{i},\ell_{i},k_{i})\mid i\in[m]\}$ be a (finite) subset of
$\IZ[X]\times\IN\times\IN$ such that if $(P,\ell,k)\in D$, then $k<\ell$. We
call such a set $D$ a \emph{set of divisibility conditions} and define
$\phi_{D}(\bar{x})$ as the formula
\[\bigwedge_{i\in[m]}D_{\ell_{i}}(\op_{P_{i}}(x_{i})+k_{i}).\]
\indent To $[Q]$ and $D$ as above, we associate an $n$-ary predicate
$\Im_{[Q],D}(\bar y)$. When $D$ is empty, we write $\Im_{[Q]}$ instead of
$\Im_{[Q],D}$. In $\str{Z}_{R}$, the predicate $\Im_{[Q],D}(\bar y)$ is
interpreted as
follows: for all $z\in\IZ^{|\bar y|}$, $\Im_{[Q],D}(\bar z)$ if and only if
$\exists \bar{x}\in R(\phi_{[Q]}(\bar{x},\bar{y})\wedge\phi_{D}(\bar{x}))$. So
this symbol states that $\bar{z}$ is in the image of sums of operators, and a
witness of this fact satisfies certain divisibility conditions.\\
\indent Finally let $\L$ be the language
\[\L_{g}\cup\{1\}\cup\L_{S}\cup\{R\} \cup\{\Im_{[Q],D}\mid [Q]\text{ and }D\textrm{ as
    above}\},\]
and we let $\str{Z}_{R,\L}$ be the $\L$-expansion of $\str{Z}_{R}$ described above.\\
\indent We fix an axiomatization $T_{1}$ of $\Th(\IZ,+,-,0,1,D_{n}\mid 1<n\in\IN)$ (see
\cite[Chapter 15, Section 15.1]{rothmaler}) and we let $T_{2}$ be the following
universal axiomatization of $\Th(R,S,S^{-1},c)$:
\[T_{2}=\{\forall x(x\neq c\rightarrow S(S^{-1}(x))=x), \forall
  x(S^{-1}(S(x))=x), \forall x(S(x)\neq c), S^{-1}(c)=c\}.\]
We will denote by $T_{2}^{R}$ the theory obtained by relativizing to the
predicate $R$ the quantifiers appearing in each element of $T_{2}$.  We will
frequently use the fact that, modulo $T_{1}$, a formula of the form
$\neg D_{n}(x)$ is equivalent to
\[\bigvee_{k=1}^{n-1} D_{n}(x+k).\]
Let $\str{M}$ be an $\L$-structure. Let $Q_{1},\ldots,Q_{n}\in\IZ[X]$. We say
that $\bar{a}\in M^n$ is a non-degenerate solution of
$\sum_{i=1}^{n}\op_{Q_{i}}(x_{i})=y$ if it is a solution of this equation and
no proper sub-sum is equal to $0$. This
can be expressed by the following first-order formula $N_{\bar{Q}}^{\nd}(\bar{x},y)$
\[\sum_{i=1}^{n}\op_{Q_{i}}(x_{i})
  =y\wedge \bigwedge_{I\subsetneq[n]}\sum_{i\in
    I}\op_{Q_{i}}(x_{i})\neq 0.\]
Let $T_{R}$ be the following set of axioms.
\begin{enumerate}[label=(Ax.\arabic*)]
\item\label{axiom-group} $T_{1}$;
\item\label{axiom-seq} $T_{2}^R$;
\item\label{axiom-r0} $c=r_{0}$ (that is $c$ equals the term
  $\underbrace{1+\cdots+1}_{r_{0}\textrm{ times}}$);
\item\label{axiom-succ} $\forall x(\neg R(x)\rightarrow S(x)=x)$;
\item\label{axiom-cong} for all $\ell_{1},\ldots,\ell_{n}$, all $0\le
  k_{i}\le\ell_{i}$ and $\bar{Q}\in\IZ[X]^{n}$, if
  \[
    \left\{\bar{z}\in R(\IZ)^{n}
    \,\left|\,\str{Z}_{\L,R}\models\bigwedge_{i\in[n]}\right.
    D_{\ell_{i}}(\op_{Q_{i}}(z_{i})+k_{i})\right\}
  =\{\bar{w}_{1},\ldots,\bar{w}_{m}\}
  \]
  then we add the axiom
  \[
    \forall\bar{x}\in R\left(\bigwedge_{i\in[n]}
      D_{\ell_{i}}
      (\op_{Q_{i}}(x_{i})+k_{i})\rightarrow
      \bigvee_{i\in[m]}\bigwedge_{j\in[n]}
    x_{i}=w_{ij}\right);
  \]
\item\label{axiom-image} for all $[Q],D$ as above, we add the axiom
  \[\forall
    \bar{y}\left(\Im_{[Q],D}(\bar{y})\leftrightarrow\exists\bar{x}\in
      R(\phi_{D}(\bar{x}) \wedge\phi_{[Q]}(\bar{x},\bar{y}))\right);\]
\item\label{axiom-operator} for all $Q\in\triv$, we add the axiom
  \[\forall x\in R\;\;\op_{Q}(x)=0\]
  and for all $Q\notin\triv$ we add
  \[ \forall x,y\in R\left(x>e\wedge y>e\wedge x\neq y\rightarrow \op_{Q}(x)\neq\op_{Q}(y)\right),\]
  where $e=e(Q)$ (see \ref{op-cst} on page~\pageref{op-cst});
\item\label{axiom-eq} for every $Q_{1},\ldots,Q_{s}\in\IZ[X]$, we add the axiom
  \begin{align*}
    \forall \bar{x}\in R\left(N_{\bar{Q}}^{\nd}(\bar{x},0)\rightarrow
    \bigvee_{\bar{m}\in E}\bigwedge_{i\in[s]}x_{i}=S^{m_{i}}(x_{1})\right),
  \end{align*}
  where $E=E_{\bar{Q}}$ (see \ref{bop-cst} on page~\pageref{bop-cst}).
\end{enumerate}

Note that  $\str{Z}_{R,\L}$ is a model of $T_{R}$. Indeed, \ref{axiom-operator}
follows from Corollary \ref{corollary-inj} and \ref{axiom-eq} follows from
Proposition \ref{proposition-sol-eq}. In particular $T_R$ is consistent.
Also note that all axioms but the defining axioms for the congruences $D_n$
and the predicates $\Im_{[Q],D}$ are universal.\\
\indent The main result of this section is the following
theorem.
\begin{theorem}\label{thm-eq}
  The $\L$-theory $T_{R}$ has quantifier elimination.
\end{theorem}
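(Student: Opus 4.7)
The plan is to verify the two hypotheses of Proposition~\ref{prop-qe-crit}: the existence of algebraically prime models, and 1-existential completeness of $T_R$.

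For the first, given $\str{A}\subseteq\str{M}\models T_R$, I would take $\bar{A}$ to be the Presburger prime closure of $A$ inside $\str{M}$, that is, the smallest $\L_g$-substructure of $\str{M}$ containing $A$ and closed under the divisibility witnesses (for each $a$ and $n$ with $\str{M}\models D_n(a)$, adjoin the unique $b$ with $a=nb$). Set $R(\bar{\str{A}}):=R(\str{A})$, extend $S$ and $S^{-1}$ by the identity on $\bar{A}\setminus R(\str{A})$ (as forced by \ref{axiom-succ}), and interpret each predicate $\Sigma_{C,D}$ via \ref{axiom-image}. Axioms \ref{axiom-group}--\ref{axiom-image} hold by construction, while \ref{axiom-operator} and \ref{axiom-eq} quantify only over $R(\bar{\str{A}})=R(\str{A})\subseteq R(\str{M})$, so the constants $c(\op)$, $k(\bar{\op})$, and exceptional sets $E_j$ provided by $\str{M}$ still witness the axioms in $\bar{\str{A}}$. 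Any embedding $\str{A}\to\str{N}\models T_R$ extends to $\bar{\str{A}}\to\str{N}$ by the universal property of the Presburger prime closure, and the extension respects $S$ by \ref{axiom-succ} and each $\Sigma_{C,D}$ by \ref{axiom-image}.

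For 1-e.c., let $\str{M}_0\subseteq\str{M}$ be models of $T_R$ and $\phi(x)$ an $\L$-formula with parameters in $M_0$ such that $\phi(\str{M})\neq\emptyset$. By induction on formula complexity it suffices to treat $\phi(x)=\exists y\,\psi(x,y)$ with $\psi$ quantifier-free. A normalization step rewrites $\psi$ so that $\L_S$-symbols only appear on variables explicitly constrained to lie in $R$; elsewhere \ref{axiom-succ} collapses $S,S^{-1}$ to the identity. One then splits on the cases $R(y)$ and $\neg R(y)$. If the witness lies outside $R$, the resulting formula is, modulo $\Sigma_{C,D}$ predicates applied to fixed parameter tuples, an $\L_g$-formula with parameters in $M_0$, and 1-e.c.\ of Presburger arithmetic produces a witness in $M_0$. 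If the witness lies in $R$, one groups together all variables constrained to $R$ and applies \ref{axiom-image}: an existential statement of the form $\exists\bar{y}\in R\,(\phi_C(\bar{y})\wedge\phi_D(\bar{x},\bar{y}))$ is equivalent to the quantifier-free formula $\Sigma_{C,D}(\bar{x})$, which has the same truth value in $\str{M}_0$ as in $\str{M}$.

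The main obstacle is the normalization step when several variables in $R$ interact via operator equations that are not immediately of the form $\phi_D$. Here axioms \ref{axiom-operator} and \ref{axiom-eq} are essential: they describe, up to finitely many exceptional values, precisely when an operator equation has a solution in $R$ and how non-degenerate solutions of a system $\sum_{i}\op_i(y_i)=0$ are parametrized by $S$-translates of a single variable. This allows every sub-existential over $R$ to be packaged into a finite disjunction of atomic formulas of the form $\Sigma_{C,D}$ composed with powers of $S$, after which the case analysis above applies and yields the desired witness in $M_0$, completing the verification of 1-existential completeness and hence quantifier elimination by Proposition~\ref{prop-qe-crit}.
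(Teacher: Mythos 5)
Your overall strategy (verifying the two hypotheses of Proposition~\ref{prop-qe-crit}) is the same as the paper's, but both halves of your verification have genuine gaps. The first is fatal as written: you set $R(\overline{\str{A}}):=R(\str{A})$ and claim Axiom~\ref{axiom-image} ``holds by construction.'' It does not. Axiom~\ref{axiom-image} is a biconditional whose right-hand side is existential over $R$: if some $a\in A$ satisfies $\Sigma_{C,D}(a)$ in $\str{M}$ (and hence in the substructure $\str{A}$, since $\Sigma_{C,D}$ is a predicate of $\L$), the witnesses $\bar{b}\in R(M)$ with $\phi_C(\bar b)\wedge\phi_D(a,\bar b)$ need not lie in $A$. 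Your $\overline{\str{A}}$ then satisfies $\Sigma_{C,D}(a)$ with no witness in $R(\overline{\str{A}})$, so it is not a model of $T_R$; and reinterpreting $\Sigma_{C,D}$ ``via \ref{axiom-image}'' instead would destroy the embedding $\str{A}\hookrightarrow\overline{\str{A}}$. This is exactly why the paper's construction adjoins to $\str{A}$ all \emph{$\bar{\op}$-good tuples} from $R(M)\setminus A$ (witnesses, in distinct orbits, for the $\Sigma_{C,D}$-facts about elements of $A$) before taking the divisible closure, and why the extension-of-embeddings step is then nontrivial: one must realize the partial type of these good tuples in $\str{N}$ and use Lemma~\ref{lemma-sol-eq-orbits} to show the realizations actually lie in $N$ and sit in the correct orbit configuration.

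The second gap is in your 1-e.c.\ argument in the case where the witness lies outside $R$. Model completeness of Presburger arithmetic gives you $b_0\in M_0$ satisfying the $\L_g$-part of the formula, but it says nothing about the conjuncts $\neg\Sigma_{C_i,D_i}(n_{i1}x+a_{i1},\ldots)$, and $b_0$ may well land in one of these $\Sigma$-sets. The paper closes this by translating: the set of $m$ for which $b_0+m$ hits some $\Sigma_{C_i,D_i}$ is a finite union of translates of images of operators, hence not piecewise syndetic by Proposition~\ref{proposition-image-op}, while the set of $m$ preserving the congruence conditions is syndetic; so a suitable shift $b_0+m$ exists. Your proposal never invokes this smallness input, and without it the case analysis does not go through. (Your treatment of the case where the witness is in $R$, reducing via \ref{axiom-operator}, \ref{axiom-eq} and \ref{axiom-image} to $\Sigma_{C,D}$-conditions on parameters, is essentially the paper's argument for the subcase $R(M_0)\subsetneq R(M)$ and is on the right track.)
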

Notice that the sequence $R=\{2^{n}+n\mid n\in\IN\}$ does not satisfy
\ref{axiom-eq}:
considering the term $\op(x)=S^{2}(x)-3S(x)+2x$, one can find infinitely many
(non-degenerate) solutions of the equation $\op(x_{1})-\op(x_{2})=0$. In view of
Theorem \ref{theorem-sup2} this is not surprising because the structure
$(\IZ,+,0,1,R,S)$ is known to be unstable: $\IN$ is definable by the formula
$\exists y\in R\ y\neq 1\wedge (2y-S(y)=x)$. However, we do not know if there
exists a sequence $R$ such that $\str{Z}_{R}$ is (super)stable and
$(\IZ,+,0,1,R,S)$ unstable.\\
\indent To establish quantifier elimination, we use the following
criterion. Given two models $\str{M}_{0}\subset\str{M}$ of an arbitrary theory, we say that
$\str{M}_{0}$ is \emph{1-e.c.} in $\str{M}$ if any quantifier-free definable subset of
$M$, defined with parameters in $M_{0}$, has a non-empty intersection with
$M_{0}$.
\begin{proposition}[{\cite[Corollary 3.1.12]{marker}}]\label{prop-qe-crit}
  Let $T$ be an $\mathcal{L}$-theory such that
  \begin{enumerate}
  \item ($T$ has algebraically prime models) for all $\str{M}\models T$ and all
    $\str{A}\subset\str{M}$, there exists a model $\overline{\str{A}}$ of $T$
    such that for all $\str{N}\models T$, any embedding $f:\str{A}\to\str{N}$
    extends to an embedding $\bar{f}:\overline{\str{A}}\to\str{N}$;
  \item ($T$ is 1-e.c.) for all $\str{M}_{0},\str{M}\models T$, if
    $\str{M}_{0}\subset\str{M}$ then $\str{M}_{0}$ is 1-e.c. in$\str{M}$.
  \end{enumerate}
  Then $T$ has quantifier elimination.
\end{proposition}
\indent The proof of Theorem \ref{thm-eq} will be a
consequence of Proposition \ref{prop-qe-crit} and the work done in the following
subsections. In Section
\ref{subsection-cons-t}, we prove
several direct consequences of $T_{R}$ regarding equations of the form
$\op_{1}(x_{1})+\cdots+\op_{n}(x_{n})=a$. In Section
\ref{section-alg-prime}, we give a detailed construction of algebraically prime
models of $T_{R}$. Finally, we show in Section \ref{section-1ec} that $T_{R}$ is
1-e.c.
\begin{corr}\label{cor-eq}
   The $\L$-structure $\str{Z}_{R,\L}$ is a prime model of $T_{R}$. In
  particular $T_{R}$ is complete.
\end{corr}
\begin{proof}
  Since $\str{Z}_{R,\L}$ is an algebraically prime model and $T_{R}$ has
  quantifier elimination, $\str{Z}_{R,\L}$ is a prime model. Therefore, $T_{R}$ is
  complete.
\end{proof}
\subsubsection{Equations in $T_{R}$}\label{subsection-cons-t}
\begin{definition} A term $\op$ is said to be \emph{trivial} if $\op=\op_{Q}$
  for some $Q\in\triv$.
\end{definition}

\begin{definition}
  Let $\str{M}\models T_R$ and $a,b\in R$. The orbit of $a$ is the set
  $\{S^{k}(a)\mid k\in\IZ\}$ and is denoted by $\Orb(a)$. We say that $a$ and $b$
  are in the same orbit if and only if $b\in\Orb(a)$.
\end{definition}
The relation ``$a$ and $b$ are in the same orbit'' is an equivalence relation.
\begin{lemma}\label{lemma-sol-eq-orbits}
  Let $\str{M}\models T_{R}$. Let $\bar{\op}$ be a $n$-tuple of non-trivial
  terms, $n>1$, and let $b_{1},\ldots,b_{k}\in R$, $1<k\le n$, be in different
  orbits.
  \begin{enumerate}
  \item If $k>n/2$, then for all $c_{k+1},\ldots,c_{n}\in R$,
    \[\sum_{i=1}^{k}\op_{i}(b_{i})+\sum_{i=k+1}^{n}\op_{i}(c_{i})\neq 0;\]
  \item If $k\le n/2$, then for all $c_{k+1},\ldots,c_{n}\in R$, the elements
    $b_{1},\ldots,b_{k},c_{k+1},\ldots,c_{n}$ do not form a non-degenerate
    solution of the equation $\sum_{i=1}^{n}\op_{i}(x_{i})=0$. Moreover, if
    $\sum_{i=1}^{k}\op_{i}(b_{i})+\sum_{i=k+1}^{n}\op_{i}(c_{i})=0$, then for
    all $i\in[k]$ there exists a non-empty $P_{i}\subset\{k+1,\ldots,n\}$ such
    that $P_{i}\cap P_{i'}=\emptyset$ for all $i\neq i'\in[k]$ and for all
    $i\in[k]$ $b_{i},(c_{j})_{j\in P_{i}}$ is a non-degenerate solution of
    \[\op_{i}(x_{i})+\sum_{j\in P_{i}}\op_{j}(x_{j})=0.\]
  \end{enumerate}
\end{lemma}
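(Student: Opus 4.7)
The plan is to suppose that $\sum_{i=1}^{k}\op_{i}(b_{i})+\sum_{i=k+1}^{n}\op_{i}(c_{i})=0$ (otherwise both parts hold vacuously, modulo the non-degeneracy claim of part (2)) and analyze the structure of this vanishing sum via axioms \ref{axiom-operator} and \ref{axiom-eq}. The first step is to greedily partition $[n]$ into blocks $P_{1},\ldots,P_{\ell}$ such that $\sum_{i\in P_{j}}\op_{i}(x_{i})=0$ for each $j$ and either $|P_{j}|=1$ (so $\op_{i}(x_{i})=0$ for the unique $i\in P_{j}$) or $|P_{j}|\ge 2$ and the sub-sum is non-degenerate. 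Such a refinement exists because the full sum vanishes and we can iteratively peel off any proper vanishing sub-sum until none remains within a block.

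Next I would establish the key orbit claim: for every non-singleton block $P_{j}$, all the variables $(x_{i})_{i\in P_{j}}$ lie in a single $S$-orbit of $R$. This follows from axiom \ref{axiom-eq} applied to the sub-tuple $(\op_{i})_{i\in P_{j}}$: the non-degenerate solution must satisfy $x_{i}=S^{k_{ji_{0}}}(x_{i_{0}})$ for some fixed reference index $i_{0}\in P_{j}$ and integer shifts $k_{ji_{0}}\in\IZ$. For singleton blocks $\{i\}$ with $\op_{i}(x_{i})=0$, axiom \ref{axiom-operator} together with the non-triviality of $\op_{i}$ forces $x_{i}$ into a finite initial segment of $R$, hence into the standard orbit $\Orb(1)$. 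Since $b_{1},\ldots,b_{k}$ lie in pairwise distinct orbits, no two of them can share a non-singleton block, so each $b_{i}$ is alone in its block; moreover, at most one $b_{i}$ can sit in a singleton block, namely the unique $b_{i}$ (if any) lying in the standard orbit.

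For part (1), the $b_{i}$'s thus occupy $k$ distinct blocks. Every non-singleton block containing some $b_{i}$ must contain at least one $c_{j}$ (with $j>k$) in $\Orb(b_{i})$, and these $c_{j}$'s are distinct across blocks. If no $b_{i}$ is in a singleton, this produces $k$ pairwise disjoint non-empty subsets of $\{k+1,\ldots,n\}$, contradicting $n-k<k$; if some $b_{i_{0}}$ is in a singleton then $b_{i_{0}}\in\Orb(1)$ and the remaining $k-1$ $b_{i}$'s each need a distinct $c$-partner, forcing $n-k\ge k-1$, which combined with $k>n/2$ leaves only the boundary $n=2k-1$, to be handled by removing the vanishing term $\op_{i_{0}}(b_{i_{0}})$ and invoking part (2) on the reduced equation. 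For part (2), the partition directly provides the required data: one sets $P_{i}:=P_{j(i)}\setminus\{i\}\subseteq\{k+1,\ldots,n\}$, pairwise disjoint by construction and non-empty (otherwise $b_{i}$ would be a standard zero of $\op_{i}$, and one reduces as before), with the sub-equation at $(b_{i},(c_{j})_{j\in P_{i}})$ non-degenerate by minimality of the decomposition. The main obstacle I expect is the threshold $c$ in axiom \ref{axiom-eq}: its conclusion is stated only for $x_{i}>c$, so one must verify that any element in a non-standard orbit automatically exceeds every standard constant, reducing all small-variable issues to the standard orbit, where non-triviality and axiom \ref{axiom-operator} take over, and carefully closing the boundary case $n=2k-1$ in part (1).
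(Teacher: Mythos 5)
Your proposal follows essentially the same route as the paper's proof: observe via Axiom \ref{axiom-eq} that the tuple must be a degenerate solution, refine the vanishing sum into blocks on which the sub-sums vanish non-degenerately, use Axiom \ref{axiom-eq} again to force all entries of a non-singleton block into one orbit (hence at most one $b_{i}$ per block), argue that each block containing a $b_{i}$ must also contain some $c_{j}$, and count. Part (2) then falls out of the partition exactly as you describe, and the counting gives $k\le n/2$, which is part (1) by contraposition.

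Where you diverge is in the treatment of singleton blocks $\{i\}$ with $i\le k$, i.e.\ the possibility that $\op_{i}(b_{i})=0$. The paper dismisses this in one clause (``since all operators involved are non trivial, we must have $|P_{j}\cap\{k+1,\ldots,n\}|>0$''), which tacitly assumes that a non-trivial operator cannot vanish at any $b_{i}$; by Axiom \ref{axiom-operator} non-triviality only forces such a root to be one of finitely many standard elements, so the paper's claim is really the assertion that no $b_{i}$ is a standard root of $\op_{i}$. You correctly isolate this case, and your observation that all standard elements lie in $\Orb(1)$, so at most one $b_{i}$ can occupy a singleton block, is right. However, your patch for part (1) in the resulting boundary case $n=2k-1$ does not close: ``invoking part (2) on the reduced equation'' only tells you that the reduced tuple is a degenerate solution admitting a block decomposition --- it produces no contradiction with the sum being zero, and indeed in that configuration the total sum can vanish (take $b_{i_{0}}$ a standard root of $\op_{i_{0}}$ and pair each remaining $b_{i}$ with a $c_{j}$ in its orbit). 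So either you must supply the missing argument that $\op_{i}(b_{i})\neq 0$ for each $i\in[k]$ (which is what the paper silently uses, and which does hold in every application of the lemma, where the $b_{i}$ are non-standard), or the boundary case remains open. A similar caveat applies to the threshold $c$ in Axiom \ref{axiom-eq} for blocks mixing standard and non-standard entries, which you flag but do not resolve; the paper does not address it either.
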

\begin{proof}
  Let $c_{k+1},\ldots,c_{n}\in R$. It is clear from \ref{axiom-eq} that
  $b_{1},\ldots,b_{k},c_{k+1},\ldots,c_{n}$ cannot be a non-degenerate solution
  of \[\sum_{i=1}^{n}\op_{i}(x_{i})=0,\]
  since, for instance, $b_{1}$ is not in the same orbit as $b_{2}$.  Suppose
  $b_{1},\ldots,b_{k},c_{k+1},\ldots,c_{n}$ is degenerate. Then one shows by
  induction on $n$ that there exists a partition $(P_{1},\ldots,P_{\ell})$ of
  $[n]$ such that for all $j\in[\ell]$
  $(b_{i})_{i\in P_{j}\cap[k] },(c_{i})_{i\in P_{j}\cap\{k+1,\ldots,n\}}$ is a
  non-degenerate solution of
  \[\sum_{i\in P_{j}\cap[k]}\op_{i}(x_{i})+\sum_{i\in
      P_{j}\cap\{k+1,\ldots,n\}}\op_{i}(x_{i})=0.\]
  Since $b_{1},\ldots,b_{k}$ are in different orbits, we must have, by
  \ref{axiom-eq}, $|P_{j}\cap[k]|\le 1$ for all $j\in[\ell]$. Also, since all
  terms involved are non-trivial, we must have
  $|P_{j}\cap\{k+1,\ldots,n\}|>0$ for all $j\in[\ell]$. This implies in
  particular that $k\le n/2$ and finishes the proof of the lemma.
\end{proof}
We now show that \ref{axiom-eq} is true for non-homogeneous equations.
\begin{proposition}\label{prop-sol-inhom}
  Let $\str{M}\models T_{R}$, $\bar{Q}\in \IZ[X]^{n}$ and $a\in M$, $a\neq 0$. Then there exist
  $\bar{b}_{1},\ldots,\bar{b}_{k}\in R$ such that
  \[\str{M}\models\forall\bar{x}\in R\left(N_{\bar{Q}}^{\nd}(\bar{x},a)\rightarrow\bigvee_{j=1}^{k}
      \bigwedge_{i=1}^{n}x_{i}=b_{ji}\right).\]
\end{proposition}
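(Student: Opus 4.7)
The plan is a proof by contradiction, symmetrizing the inhomogeneous equation into a homogeneous one so that Axiom \ref{axiom-eq} can be applied. I tacitly take $a \neq 0$; the case $a=0$ is precisely what Axiom \ref{axiom-eq} describes and need not give a finite solution set. Suppose, toward a contradiction, that there are infinitely many non-degenerate solutions $\bar{y}^{(l)}\in R^n$ of $\sum_{i=1}^n \op_i(y^{(l)}_i)=a$. After choosing a uniform threshold $c$ (see below) and discarding the finitely many $\bar{y}^{(l)}$ with some coordinate $\leq c$, I may assume every coordinate of every $\bar{y}^{(l)}$ is $>c$. Fix $\bar{y} := \bar{y}^{(1)}$; for any other solution $\bar{y}'$, subtracting yields the homogeneous equation
\[\sum_{i=1}^n \op_i(y_i) - \sum_{i=1}^n \op_i(y'_i) = 0\]
satisfied by the $2n$-tuple $(\bar{y},\bar{y}')$.

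I then decompose $(\bar y,\bar y')$ along a partition $\{P_1,\ldots,P_s\}$ of $[2n]$ into non-degenerate sub-solutions of the corresponding sub-equations, which are themselves homogeneous. A block $P_t\subseteq[n]$ contained in the $\bar y$-side would give $\sum_{i\in P_t}\op_i(y_i)=0$: a proper $P_t$ contradicts $\sigma_{\bar{\op}}(\bar y)$, while $P_t=[n]$ exactly forces $a=0$. The symmetric argument rules out blocks contained in $\{n+1,\ldots,2n\}$, so every block must be \emph{mixed}, containing at least one index from each side.

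For each mixed block I apply Axiom \ref{axiom-eq} to its associated sub-equation, taking $c$ to be the maximum of the constants produced over the finitely many partitions of $[2n]$ and their blocks. Above this threshold, each variable in a block equals $S^{k}$ of the block's distinguished variable for some $k$ in a finite set determined by $\bar{\op}$. Because the block is mixed, this expresses each coordinate $y'_j$ as $S^{k_{ij}}(y_i)$ for some $i\in[n]$ and some $k_{ij}$ from a finite set; propagating the shifts across blocks determines $\bar{y}'$ from $\bar y$ up to a finite set of shift vectors depending only on $\bar{\op}$.

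Consequently the infinitely many $\bar{y}^{(l)}$ all lie in a finite set depending only on $\bar{y}^{(1)}$ and $\bar{\op}$, the desired contradiction. The main obstacle I foresee is purely bookkeeping: I must verify that "every block is mixed" together with Axiom \ref{axiom-eq} really furnishes a finite, \emph{uniform} list of shift vectors expressing $\bar{y}'$ in terms of $\bar y$, independent of which $\bar y^{(l)}$ was chosen, and that the finitely many threshold and small-coordinate contributions can be absorbed into the final list of witnesses. Once that is done, the witnesses $\bar b_1,\ldots,\bar b_k$ of the statement are obtained simply by collecting the finitely many non-degenerate solutions.
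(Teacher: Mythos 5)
Your core argument --- symmetrize two solutions of $\sum_i\op_i(x_i)=a$ into a solution of the homogeneous equation $\phi_{\bar\op,-\bar\op}$, decompose the $2n$-tuple into non-degenerate zero-sum blocks, observe that $\sigma_{\bar\op}$ together with $a\neq 0$ forces every block to be mixed, and then invoke Axiom \ref{axiom-eq} on each block to pin the second solution to finitely many shift vectors of the first --- is exactly the main case of the paper's proof, which routes the decomposition through Lemma \ref{lemma-sol-eq-orbits} rather than deriving it by hand but to the same effect. Your explicit restriction to $a\neq 0$ is also well taken: for $a=0$ the statement as printed can fail (take $\op_1(x_1)-\op_1(x_2)=0$, all of whose solutions $x_1=x_2$ off the finite root set are non-degenerate), and the paper is silent on this point.

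The genuine gap is the sentence ``discarding the finitely many $\bar y^{(l)}$ with some coordinate $\leq c$.'' In a model of $T_R$ the condition $x\leq c$ only pins that coordinate to the finite set $R\cap\{0,\dots,c\}$; the remaining $n-1$ coordinates are a priori unconstrained, so nothing proved so far says that only finitely many non-degenerate solutions have a small coordinate. What saves the argument is that fixing $x_i=r$ turns the equation into $\sum_{j\neq i}\op_j(x_j)=a-\op_i(r)$, whose right-hand side is again nonzero precisely because $\sigma_{\bar\op}(\bar x)$ forbids the proper subsum $\sum_{j\neq i}\op_j(x_j)$ from vanishing; in other words, the small-coordinate case is an instance of the Proposition itself for $n-1$ operators. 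So the proof must be organized as an induction on $n$, with the base case $n=1$ supplied by Lemma \ref{lemma-inj-op}, which is exactly how the paper structures it. This is a missing structural ingredient rather than bookkeeping, though it is repaired by wrapping your argument in that induction. (The paper's induction additionally disposes of solutions with two coordinates in a common orbit, a step your block decomposition happens not to need since it never requires the coordinates of the fixed solution to lie in distinct orbits.)
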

\begin{proof}
  Let $\op_{i}=\op_{Q_{i}}$ for all $i\in[n]$. Assume there
  exist infinitely many distinct non-degenerate solutions $\bar{b}_{i}\in
  M^{n}$, $i\in\IN$, of the equation
  \[\op_{1}(x_{1})+\cdots+\op_{n}(x_{n})=a.\]
  We will reach a contradiction using \ref{axiom-eq} applied to the equation
  \[\sum_{i=1}^{n}\op_{i}(x_{i})-\sum_{i=n+1}^{2n}\op_{i-n}(x_{i})=0,\]
  which we denote by $\phi(\bar{x})$.\\
  \indent We have that for all $i\in\IN$, the tuple
  $(\bar{b}_{0},\bar{b}_{i})$ is a solution of
  $\phi(\bar{x})$. We may assume that there exists
  a partition $I=(I_{1},\ldots,I_{\ell})$ of $[2n]$ such that for all $i\in\IN$
  and all $j\in[\ell]$, $(b_{0k}\mid k\in I_{j}\cap[n])$,
  $(b_{ik}\mid k+n\in I_{j}\setminus[n])$ is a non-degenerate solution of the equation
  \[\sum_{i\in I_{j}\cap[n]}\op_{i}(x_{i})-
    \sum_{i\in I_{j}\setminus[n]}\op_{i-n}(x_{i})=0.\]
  (Notice that for each $j\in[\ell]$ $I_{j}\cap[n]\neq\emptyset$ and
  $I_{j}\setminus[n]\neq\emptyset$ by non-degeneracy and the fact that $a\neq
  0$.)\\
  \indent By \ref{axiom-eq} for all
  $j\in[\ell]$, there is a finite set $E_{j}\subset\IZ^{|I_{j}|}$ such that for all
  $i\in\IN$
  \[\bigvee_{\bar{m}\in E_{j}}\left(\bigwedge_{k\in I_{j}\cap[n]}b_{0k_{0}}=S^{m_{k}}(b_{0k})
    \wedge\bigwedge_{k\in I_{j}\setminus[n]}b_{0k_{0}}=S^{m_{k}}(b_{ik})\right),\]
  where $k_{0}=\min I_{j}$.\\
  \indent But this is a contradiction since the set defined by the formula
  \[\bigvee_{\bar{m}\in E_{j}}\left(\bigwedge_{k\in I_{j}\cap[n]}b_{0k_{0}}=S^{m_{k}}(b_{0k})
      \wedge\bigwedge_{k\in
        I_{j}\setminus[n]}b_{0k_{0}}=S^{m_{k}}(x_{k})\right)\]
  is finite for all $j\in[\ell]$.
\end{proof}
As a corollary, we obtain a uniform bound on the number of non-degenerate
solutions.
\begin{corr}
  Let $\bar{Q}\in (\IZ[X]\setminus\triv)^{n}$ and $\str{M}\models T_{R}$.
  Then there exist
  $k\in\IN$ such that
  \[\str{M}\models\forall y\left(y\neq 0\rightarrow\exists \bar{x}_{1},\ldots,\bar{x}_{k}\in R\,\forall
    \bar{z}\in R
    \left(N_{\bar{Q}}^{\nd}(\bar{z},y)\rightarrow\bigvee_{j=1}^{k}
      \bigwedge_{i=1}^{n}z_{i}=x_{ji}\right)\right).\]

\end{corr}
\begin{proof}
   Let $\op_{i}=\op_{Q_{i}}$ for all $i\in[n]$. If the corollary is false, we
   can find  a
  sequence $(a_{i}\mid i\in\IN^{>0})$ in $M\setminus\{0\}$ such that
  $\op_{1}(x_{1})+\cdots+\op_{n}(x_{n})=a_{i}$ has at least $i+1$
  non-degenerate solutions in $R(M)^{n}$. By Proposition
  \ref{prop-sol-inhom}, we may assume that $a_{i}\neq a_{i'}$ whenever $i\neq
  i'$. For all $i\in\IN$, let
  $(\bar{b}_{ij}\mid j\in[i+1])$ be a sequence of
  pairwise distinct non-degenerate solutions of
  $\op_{1}(x_{1})+\cdots+\op_{n}(x_{n})=a_{i}$. Let $U$ be a non principal
  ultrafilter over $\IN$. Let
  $\bar{b}^{U}_{j}=[(\bar{b}_{ij}\mid i\in\IN^{>0})]_{U}$. We have that
  $\bar{b}_{j}^{U}\neq\bar{b}_{j'}^{U}$ whenever $j\neq j'$. So $\str{M}^{U}$
  has infinitely many non-degenerate solutions of
  $\op_{1}(x_{1})+\cdots+\op_{n}(x_{n})=[(a_{i})]_{U}$, a contradiction with
  Proposition \ref{prop-sol-inhom} since $\str{M}^{U}\models T_{R}$.
\end{proof}
\begin{proposition}\label{prop-terms-substructures}
  Let $\str{M},\str{M}_{0}\models T_{R}$ such that $\str{M}_{0}\subset
  \str{M}$. Let $\bar{\op}$ be a tuple of $n$ non-trivial terms,
  $b_{1},\ldots,b_{n}\in R(M)\backslash R(M_{0})$ in different orbits and
  $a\in M_{0}$, $a\neq 0$. Then
  \[\sum_{i=1}^{n}\op_{i}(b_{i})+a\notin R(M).\]
\end{proposition}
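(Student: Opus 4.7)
I would argue by contradiction, together with an inductive strengthening. The strengthened claim, proved by induction on $m\ge 1$, is the following: whenever $\bar\op$ is an $m$-tuple of non-trivial operators, $b_1,\ldots,b_m\in R(M)\setminus R(M_0)$ lie in pairwise distinct orbits, and $a'\in M_0\setminus\{0\}$, one has $\sum_{i=1}^m\op_i(b_i)\ne a'$. Call this claim $(\star)_m$.

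Granting $(\star)_m$ for the relevant values of $m$, the proposition follows by a case analysis on $r$, assuming that $\sum\op_i(b_i)+a=r\in R(M)$. When $r\in R(M_0)$, then $\sum\op_i(b_i)=r-a\in M_0$, which is either $0$ (impossible by Lemma \ref{lemma-sol-eq-orbits}(1) for $n\ge 2$, and by non-triviality of $\op_1$ together with $b_1>c$ for $n=1$) or contradicts $(\star)_n$. When $r\notin R(M_0)$ lies in the orbit of some $b_j$, say $r=S^k(b_1)$, writing $\op_1':=\op_1-S^k$ reduces the identity to a tuple to which $(\star)_n$ or $(\star)_{n-1}$ applies, according as $\op_1'$ is non-trivial or trivial (in the trivial case with $n=1$ we directly get $a=0$). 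When $r\notin R(M_0)$ is in a new orbit, the augmented tuple $(\op_1,\ldots,\op_n,-\id)$ applied to $(b_1,\ldots,b_n,r)$ sums to $-a\in M_0\setminus\{0\}$, violating $(\star)_{n+1}$.

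The inductive proof of $(\star)_m$ hinges on the observation that $\Sigma_{\bar\op}$ is an atomic predicate of $\L$, so its truth is preserved by the substructure $\str{M}_0\subset\str{M}$. Hence, from the assumption $\sum\op_i(b_i)=a'$ in $\str{M}$, axiom \ref{axiom-image} provides $\bar c\in R(M_0)^m$ with $\sum\op_i(c_i)=a'$; after extracting a minimal non-degenerate sub-solution $\bar c'\in R(M_0)^{|J'|}$ of $\sum_{i\in J'}\op_i(y_i)=a'$, the hybrid tuple $(\bar b,\bar c')$ satisfies the homogeneous equation $\sum_i\op_i(x_i)-\sum_{i\in J'}\op_i(y_i)=0$. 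If this solution is non-degenerate, axiom \ref{axiom-eq} forces all $m+|J'|$ entries into a single $S$-orbit; in particular $b_1$ lies in the orbit of some $c'_i\in R(M_0)$, contradicting $b_1\notin R(M_0)$. If it is degenerate, a proper vanishing sub-sum must be mixed (purely-$\bar b$ vanishings are ruled out by Lemma \ref{lemma-sol-eq-orbits}(1) together with non-triviality of the operators, and purely-$\bar c'$ ones by the chosen non-degeneracy of $\bar c'$), giving $\sum_{I_b}\op_i(b_i)=\sum_{I_c}\op_i(c'_i)=t\in M_0$ with $\emptyset\ne I_b\subsetneq[m]$; either $t=0$ (again contradicting Lemma \ref{lemma-sol-eq-orbits}(1)) or $t\ne 0$ (contradicting the inductive hypothesis $(\star)_{|I_b|}$).

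The main obstacle is this careful enumeration of degenerate sub-sums in the hybrid $(\bar b,\bar c')$, combined with the technicality that certain components of $\bar c'$ may lie below the threshold $c$ of axiom \ref{axiom-eq}; such small components, being named by closed terms of $\L_g$, can be absorbed into the constant by replacing $\op_i(c'_i)$ with its computable integer value, leaving a sub-equation to which axiom \ref{axiom-eq} applies directly.
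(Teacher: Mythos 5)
Your argument is correct, and it rests on the same two pillars as the paper's proof: transferring the atomic fact $\Sigma_{\bar{\op}}(\cdot)$ from $\str{M}$ down to the substructure $\str{M}_0$ so that axiom \ref{axiom-image} produces witnesses in $R(M_0)$, and then playing the orbit decomposition of degenerate solutions (axiom \ref{axiom-eq} via Lemma \ref{lemma-sol-eq-orbits}) against the fact that orbits of elements of $R(M_0)$ stay inside $M_0$. The organization, however, is genuinely different. The paper dispenses with your case analysis on $r=\sum_i\op_i(b_i)+a$ and with the induction entirely: it adjoins $-x$ as an extra non-trivial operator evaluated at $r$, so that $\Sigma_{(\bar{\op},-x)}(-a)$ transfers to $\str{M}_0$ and yields a witness tuple $b_{n+2},\ldots,b_{2n+2}\in R(M_0)$; a single application of Lemma \ref{lemma-sol-eq-orbits}(2) to the resulting $(2n+2)$-variable homogeneous equation, with $b_1,\ldots,b_n$ as the pairwise-non-orbit-equivalent entries, then forces the disjoint non-empty sets $J_i$ to avoid the indices $\ge n+2$ (those entries lie in $M_0$) and hence to sit inside $\{n+1\}$, giving $n=1$, $\op_1(b_1)=b_{n+1}$ and $a=0$. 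Your claim $(\star)_m$ in effect re-proves inline the content of Lemma \ref{lemma-sol-eq-orbits} that the paper simply cites. What your version buys is explicit coverage of points the paper leaves tacit --- the subcase $r\in\Orb(b_j)$, which the $-x$ trick absorbs silently, and the below-threshold witness components, which the paper never mentions; what it costs is length, and your ``absorb small witnesses into the constant'' step still leaves an inhomogeneous equation that ultimately needs Lemma \ref{lemma-inj-op} or Proposition \ref{prop-sol-inhom} to close --- a delicacy shared with, not worse than, the paper's own bare appeal to Lemma \ref{lemma-sol-eq-orbits}.
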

\begin{proof}
  Suppose, towards a contradiction, that
  $\sum_{i=1}^{n}\op_{i}(b_{i})+a=b_{n+1}\in R(M)$. By \ref{axiom-image}, there
  exists
  $b_{n+2},\ldots,b_{2n+2}\in R(M_{0})$ such that
  $\sum_{i=1}^{n}\op_{i}(b_{i})-b_{n+1}=\sum_{i=1}^{n}\op_{i}(b_{n+1+i})-b_{2n+2}=-a$.
  By Lemma \ref{lemma-sol-eq-orbits} applied to \[\sum_{i=1}^{n}\op_{i}(x_{i})-x_{n+1}-\sum_{i=1}^{n}\op_{i}(x_{n+1+i})+x_{2n+2}=0,\]
  and $b_{1},\ldots,b_{n}$, for all $i\in[n]$, there exists
  $J_{i}\subset\{n+1,\ldots,2n+2\}$ such that $b_{i},(b_{j})_{j\in J_{i}}$ is a
  non-degenerate solution to the corresponding equation. We furthermore have
  that $J_{i}\neq \emptyset$ for all $i\in[n]$ and $J_{i}\cap J_{i'}=\emptyset$
  for all $i\neq i'\in[n]$. Furthermore, since $b_{k}\in M_{0}$ for all $k>n+1$,
  $k\notin J_{i}$ for all $i\in[n]$. So $n+1\in J_{i}$ for all $i\in[n]$.
  This implies that $n=1$. So, $\op_{1}(b_{1})-b_{2}=0$. But this contradicts
  the fact that $a\neq 0$.
\end{proof}
In the following lemma, we express the fact that the set of solutions of an
equation can be decomposed as a union of non-degenerate sets of solutions of
sub-equations, as we did in Section~\ref{section-regseq} before the statement of
Proposition \ref{proposition-sol-eq}. As the proof is a straightforward
verification, we leave the details to the reader.
\begin{lemma}\label{lemma-dec-eq}
  Let $\bar{Q}\in \IZ[X]^{n}$. Then
  \pushQED{\qed}
  \[
    T_{R}\models\forall y\forall\bar{x}\in
    R\left(\sum_{i=1}^{n}\op_{i}(x_{i})=y\leftrightarrow
      \bigvee_{\bar{I}\in\frak{P}([n])}\left(
      N^{\nd}_{\bar{Q}_{I_{0}}}(\bar{x}_{I_{0}},y)\wedge
      \bigwedge_{j=1}^{|\bar{I}|-1}N^{\nd}_{\bar{Q}_{I_{j}}}
      (\bar{x}_{I_{j}},0)\right)\right).
    \qedhere
  \]
  \popQED
\end{lemma}
We apply this to understand sets defined by $\Im$ predicates. Let
$\str{M}\models T$. Let $[Q]\in\IZ[X]^{n\times m}$,
$[Q']\in\IZ[X]^{n\times m'}$ and $D$ a set of divisibility conditions
of size $m$. Let
$\bar{a}\in M^{n}$.  We want to understand the set $N$ defined
by
\begin{equation}\label{eq:im}
  \Im_{[Q],D}\left(a_{1}+\sum_{i=1}^{m'}\op_{Q'_{1i}}(x_{i}),
    \ldots,a_{n}+\sum_{i=1}^{m'}\op_{Q'_{ni}}(x_{i})\right).
\end{equation}
This will be needed to show that $T_{R}$ is 1-e.c. We want to show that the formula (\ref{eq:im}) expresses two things for a tuple $\bar{b}\in R(M)^{m'}$:
\begin{enumerate}
\item there exists $J'\subset[m']$ such that $\bar{b}_{J'}$ belongs to a finite set depending only on $[Q],[Q'],D$ and $\bar{a}$;
\item for all $J\subset[m']$, $\bar{b}_{J}$ satisfies a finite number of recurrence relations and congruence relations again depending only on $[Q],[Q'],D$ and $\bar{a}$. Also, when $J_{1},J_{2}\subset[m]$ have a non-empty intersection, the above conditions on $\bar{b}_{J_{1}}$ and $\bar{b}_{J_{2}}$ must be consistent.
\end{enumerate}
To do so, for all
$J_{01},\ldots,J_{0n}\subset[m]$ and $J_{11},\ldots,J_{1n}\subset[m']$, we let
$N^{\nd}_{[Q],[Q'],\bar{J}_{0},\bar{J_{1}}}(\bar{a})$ be the set defined by the formula
\[
\exists\bar{z}\in
R\,\phi_{D}(\bar{z})\wedge\bigwedge_{i\in[n]}\left(\sum_{j\in
    J_{0i}}\op_{Q_{ij}}(z_{j})=a_{i}+\sum_{j\in J_{1i}}\op_{Q'_{ij}}(x_{i})\wedge
  \bar{z}_{J_{0i}}\bar{x}_{J_{1i}}\text{ is non-degenerate}\right).
\]
Notice that by Proposition \ref{prop-sol-inhom}, the set
$N^{\nd}_{[Q],[Q'],\bar{J}_{0},\bar{J_{1}}}(\bar{a})$ is finite if $a_{i}\neq 0$
for some $i\in[n]$.\\
\indent Recall that by
\ref{axiom-image}, the formula (\ref{eq:im}) is satisfied by some $\bar{b}\in R(M)^{m'}$ if and
only if there is $\bar{z}\in R(M)^{m}$ such that the following system of equations
and congruence relations is satisfied:
\[
  \begin{cases}
    \op_{Q_{11}}(z_{1})+\cdots +\op_{Q_{1m}}(z_{m})
    =a_{1}+\op_{Q'_{11}}(b_{1})+\cdots +\op_{Q_{1m'}}(b_{m'})\\
    \vdots\\
    \op_{Q_{n1}}(z_{1})+\cdots +\op_{Q_{nm}}(z_{m})
    =a_{n}+\op_{Q'_{n1}}(b_{1})+\cdots +\op_{Q_{nm'}}(b_{m'})\\
    D_{\ell_{1}}(\op_{P_{1}}(z_{1})+k_{1}),\ldots,
    D_{\ell_{m}}(\op_{P_{m}}(z_{m})+k_{m}).
  \end{cases}
\]
\indent Now, for each $i\in[n]$, choose, according to Lemma \ref{lemma-dec-eq}, $\bar{J}_{i}=(J_{i0},\ldots,J_{i\ell_{1}})\in\frak{P}([m])$ and $\bar{J'}_{i}=(J'_{i0},\ldots,J'_{i\ell_{2}})\in\frak{P}([m'])$ such that for all $i\in[n]$, the following equalities hold in a non-degenerate way
\[
  \sum_{j\in J_{i0}}\op_{Q_{ij}}(z_{j})=a_{i}+\sum_{j\in J'_{i0}}\op_{Q_{ij}}(b_{j}),
  \]
\[
  \sum_{j\in J_{is_{1}}}\op_{Q_{ij}}(z_{j})=\sum_{j\in J'_{is_{2}}}\op_{Q_{ij}}(b_{j}),
\]
for all $(s_{1},s_{2})$ in some fixed $K_{i}\subset [m]\times[m']$ and for the $s_{1}$ and $s_{2}$ that do not appear in $K_{i}$
\[
  0=\sum_{j\in J'_{is_{2}}}\op_{Q_{ij}}(b_{j})\text{ and }
  \sum_{j\in J_{is_{1}}}\op_{Q_{ij}}(z_{j})=0.
\]
This decomposition of each equation in the system shows that $\bar{b}_{J'}$ is in $N^{\nd}_{[Q],[Q'],\bar{J}_{0},\bar{J_{1}}}(\bar{a})$, where $J'=\cup_{i\in[n]}J'_{i0}$. For the homogeneous equations above, we may apply \ref{axiom-eq} to obtain the desired relations between $\bar{z}_{J_{is_{1}}}$ and $\bar{b}_{J'_{is_{2}}}$ whenever $(s_{1},s_{2})\in K_{i}$.\\
\indent To summarize, we state the
following corollary, which is an explicit statement of the above discussion.
\begin{corr}\label{corollary-im}
  Let $[Q]\in\IZ[X]^{n\times m}$ and $D$ a set of divisibility of size $m$. Let
  $[Q']\in\IZ[X]^{n\times m'}$. Let $\str{M}\models T_{R}$ and $\bar{a}\in
  M^{n}$. Then for all $\bar{b}\in R(M)^{m'}$
  \[
    \str{M}\models
      \Im_{[Q],D}\left(a_{1}+\sum_{i=1}^{m'}\op_{Q'_{1i}}(b_{i}),
        \ldots,a_{n}+\sum_{i=1}^{m'}\op_{Q'_{ni}}(b_{i})\right)
  \]
  if and only if for all $i\in[n]$ there are $(J_{i0},\ldots,J_{is})\in\frak{P}([m])$
  and $(J'_{i0},\ldots,J'_{is'})\in\frak{P}([m'])$  and
  $K_{i}\subset[s]\times[s']$ such that for all $s_{1}\in[n]$ there is at most
  one $s_{2}\in[n]$ such that $(s_{1},s_{2})\in K_{i}$ and
  \begin{align}
   \str{M}\models&\,\bar{b}_{J'}\in N^{\nd}_{[Q],[Q'],\bar{J}_{0},\bar{J'_{0}}}(\bar{a})\label{eqim:0}\\
    &\wedge
            \bigwedge_{i\in[n]}\bigwedge_{(s_{1},s_{2})\in K_{i}}\left(
            \sum_{j\in J_{is_{1}}}\op_{Q_{ij}}(S^{k_{ij}}(b_{j^{*}}))=\sum_{j\in
            J'_{is_{2}}}\op_{Q'}(S^{k'_{ij}}(b_{j^{*}}))\right.\label{eqim:1}\\
    &\left.\wedge \bigwedge_{j\in J'_{is_{2}}}b_{j}=S^{k'_{ij}}(b_{j^{*}})
            \bigwedge_{j\in J_{is_{1}}}D_{\ell_{j}}(\op_{P_{j}}(S^{k_{ij}}(b_{j^{*}}))+k'_{j})
      \right)\\
    &\wedge\bigwedge_{i\in[n]}\bigwedge_{(s_{1},s_{2})\notin K_{i}}\left(
            0=\sum_{j\in J'_{is_{2}}}\op_{Q'_{ij}}(S^{k_{ij}}(b_{j^{*}}))\wedge \bigwedge_{j\in J'_{is_{2}}}b_{j}=S^{k_{ij}}(b_{j^{*}})
      \right)\label{eqim:2}\\
    &\wedge\bigwedge_{i\in[n]}\bigwedge_{(s_{1},i',s'_{1},s'_{2})\in K'_{i}}\left(
      \sum_{j\in J_{is_{1}}}\op_{Q_{ij}}(S^{k_{ij}}(b_{j^{*}}))=0
      \wedge
      \bigwedge_{j\in J_{is_{1}}}
      D_{\ell_{j}}(\op_{P_{j}}(S^{k_{ij}}(b_{j^{*}}))+k'_{j})\right)\label{eqim:3}\\
    &\wedge\Im_{[\tilde{Q}],D}(0),
  \end{align}
  where $J'=\cup_{i\in[n]}J'_{i0}$, for all $i\in[n]$,
 $j^{*}=\min J'_{is_{2}}$ and
  \begin{enumerate}
    \item if $(s_{1},s_{2})\in K_{i}$, $\bar{k}_{i}$ and $\bar{k}'_{i}$ are given by \ref{axiom-eq}
      applied to the operators in (\ref{eqim:1});
    \item if $(s_{1},s_{2})\notin K_{i}$, $\bar{k}_{i}$ is given by
      \ref{axiom-eq} applied to the operators in (\ref{eqim:2}),
    \item $(s_{1},i',s'_{1},s'_{2})\in K'_{i}$ if and only if
      $(s_{1},s_{2})\notin K_{i}$ for all $s_{2}\in[n]$, $J_{is_{1}}\cap
      J_{i's'_{1}}\neq\emptyset$, $(s'_{1},s'_{2})\in K_{i}$ and $j^{*}=\min
      J_{i's'_{2}}$. In this case, $\bar{k}_{i}$ is given by
      \ref{axiom-eq} applied to the operators in (\ref{eqim:3}),
    \end{enumerate}
    and $[\tilde{Q}]$ is the matrix defined by $\tilde{Q}_{ij}=Q_{ij}$ if
    $Q_{ij}$ does not appear in (\ref{eqim:0})-(\ref{eqim:3}) and
    $\tilde{Q}_{ij}=0$ otherwise.\qed
\end{corr}
\subsubsection{$T_{R}$ has algebraically prime models}\label{section-alg-prime}
\newcommand{\Div}{\textrm{div}} Let $\str{M}\models T_{R}$ and
$\str{A}\subset\str{M}$. For $X\subset M$, we let $\Div(X)$ be the
\emph{divisible closure of $X$ in $\str{M}$}, that is the substructure generated
by $\{d\mid nd\in X\textrm{ for some }n\in\IN^{>0}\}$. We will show later on
that when $X$ is the domain of an $\L$-substructure, $\Div(X)$ is the divisible
closure of $X$ in the group theoretic sense. The construction of
the algebraically prime model over $\str{A}$, denoted $\overline{\str{A}}$, is
done as follows. Let $\bar{\op}$ be a $n$-tuple of non-trivial terms. Call an
$n$-tuple $\bar{b}\in R(M)$ \emph{$\bar{\op}$-good} if
\begin{enumerate}
\item $b_{i}\notin A$ for all $i\in[n]$;
\item $\op_{1}(b_{1})+\cdots+\op_{n}(b_{n})\in\str{A}$;
\item $b_{i}\notin\Orb(b_{j})$ whenever $j\neq i$.
\end{enumerate}
Let $\tilde{\str{A}}$ be the substructure generated by $\str{A}$ and
$\bar{\op}$-good tuples of elements of $R(M)$, for all tuples $\bar{\op}$
of non-trivial terms. This structure will satisfy all axioms of $T_{R}$
except the definition of the symbols $D_{n}$. So our algebraically prime model
over $\str{A}$ will be $\overline{\str{A}}=\Div(\tilde{\str{A}})$.
\begin{lemma}\label{lemma-alg-prime}
  $\overline{\str{A}}$ is a model of $T_{R}$.
\end{lemma}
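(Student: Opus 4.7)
The plan is to verify each of the axioms \ref{axiom-group}--\ref{axiom-eq} in $\overline{\str{A}}$, exploiting two facts: $\tilde{\str{A}}$ is by construction a substructure of $\str{M}$, and $\overline{\str{A}}$ is its divisible closure inside $\str{M}$. First I would dispatch the axioms that descend automatically to substructures. The universal axioms \ref{axiom-seq} (i.e.\ $T_2^R$), \ref{axiom-succ}, \ref{axiom-operator} and \ref{axiom-eq}, together with the universal group-theoretic portion of \ref{axiom-group}, transfer from $\str{M}$ to $\tilde{\str{A}}$ and a fortiori to $\overline{\str{A}}$. The remaining content of \ref{axiom-group} is that $\overline{\str{A}}$ is a model of Presburger arithmetic with the $D_n$ predicates correctly interpreted; this follows because taking the divisible closure of a subgroup of the $\IZ$-group $\str{M}$ inside $\str{M}$ again yields a $\IZ$-group in which $D_n$ retains its meaning of $n$-divisibility.

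The core step is \ref{axiom-image}. Let $(C,D)$ be given and take $\bar{x} \in \overline{A}$ with $\str{M} \models \Sigma_{C,D}(\bar{x})$; I must find witnesses $\bar{y} \in R(\overline{\str{A}})$ realising $\phi_C(\bar{y}) \wedge \phi_D(\bar{x},\bar{y})$. I would apply Proposition \ref{prop-sol-inhom} to each equation $\sum_j \op_{ij}(y_j) = x_i$ of $\phi_D$: after discarding trivial operators, the non-degenerate solutions in $R(\str{M})$ form a finite set, and degenerate ones reduce to non-degenerate solutions of shorter sub-equations. Pick any $\str{M}$-witness $\bar{y}$ and analyse its orbit structure via Lemma \ref{lemma-sol-eq-orbits}: the components of $\bar{y}$ split into orbit-groups, some meeting $A$ and others disjoint from $A$. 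Orbit-groups meeting $A$ already belong to $R(\tilde{\str{A}})$ because $\tilde{\str{A}}$ is closed under $S$ and $S^{-1}$. The orbit-groups disjoint from $A$ package together into sub-tuples witnessing operator equations whose right-hand sides lie in $\str{A}$, i.e.\ form $\bar{\op}'$-good tuples for suitable sub-tuples $\bar{\op}'$, and these were explicitly thrown into $\tilde{\str{A}}$ by construction. The divisibility constraints $\phi_C(\bar{y})$ hold in $\str{M}$ and hence in the substructure. Therefore $\bar{y} \in R(\tilde{\str{A}}) \subseteq R(\overline{\str{A}})$.

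Finally, I would check that passing from $\tilde{\str{A}}$ to its divisible closure does not disturb the relations established above nor introduce unaccounted-for $R$-elements. Any new $d \in \overline{A} \setminus \tilde{A}$ satisfies $nd \in \tilde{A}$ for some $n>1$; if such a $d$ happens to lie in $R(\str{M})$, then its entire $S$-orbit is already present in $\tilde{\str{A}}$ via orbit-closure considerations against $A$, forcing $d$ itself into $\tilde{\str{A}}$. Hence $R(\overline{A}) = R(\tilde{A})$, no $\Sigma_{C,D}$-witnesses are lost or spuriously created, and the axioms verified above persist. The main obstacle is the bookkeeping in \ref{axiom-image}, specifically the orbit decomposition that matches an arbitrary $\str{M}$-solution against the combination of $A$-generated orbits and $\bar{\op}'$-good tuples that have been adjoined; the remaining axioms follow by routine substructure and divisible-closure arguments.
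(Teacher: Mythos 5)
Your overall strategy coincides with the paper's: the universal axioms descend to the substructure, the divisible closure takes care of the $D_n$-part of \ref{axiom-group}, and the only axiom needing real work is \ref{axiom-image}, which you discharge by tracing an $\str{M}$-witness back to the adjoined good tuples. There is, however, one step in your treatment of \ref{axiom-image} that does not go through as written. You take $\bar x\in\overline{A}$ with $\Sigma_{C,D}(\bar x)$, pick an $\str{M}$-witness $\bar y$, and assert that the orbit-groups of $\bar y$ disjoint from $A$ ``witness operator equations whose right-hand sides lie in $\str{A}$'', hence form good tuples. But goodness is defined relative to $\str{A}$, not to $\overline{\str{A}}$: the relevant sums equal $x_i$ minus the contributions of the $A$-orbit components, and this lies only in $\overline{A}$, not in $A$, whenever $\bar x\notin A^n$. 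So for a general $\bar x$ the components of $\bar y$ need not appear in any $\bar\op'$-good tuple, and the argument stalls exactly at the point you yourself flag as the main obstacle.

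The missing ingredient is the first half of the paper's proof: a normal form for elements of $\tilde{\str{A}}$, namely that every $d\in\tilde{\str{A}}$ can be written as $a+\sum_i\op_i(b_{\lambda_i})$ with $a\in A$ and the $b_{\lambda_i}$ adjoined generators in pairwise distinct orbits (proved by evaluating $S$ on such terms and using Proposition \ref{prop-terms-substructures}), together with the observation---which your final paragraph does make---that passing to the divisible closure creates no new elements of $R$. With the normal form in hand one substitutes $x_i=a_i+\sum_j\op'_{ij}(b_{\mu_{ij}})$, moves the generator part to the other side of $\phi_D$ (merging operators on components lying in a common orbit), and thereby reduces to the case $\bar x\in A^n$, where your good-tuple bookkeeping applies verbatim. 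Without this reduction the claim that the right-hand sides land in $\str{A}$ is unjustified and in general false, so you should add the normal-form lemma as an explicit preliminary step.
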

\begin{proof}
  We begin with a description of elements in $\tilde{\str{A}}$. Assume
  $\tilde{\str{A}}=\langle A, (b_{\lambda})_{\lambda<\kappa}\rangle$, where
  $b_{\lambda}\notin\Orb(b_{\lambda'})$ for all $\lambda\neq \lambda'$ and each
  $b_{\lambda}$ appears in a good tuple. We want to show that any
  $d\in\tilde{\str{A}}$ can be put in the form
  $a+\sum_{i=1}^{n}\op_{i}(b_{\lambda_{i}})$, where
  $\lambda_{i}\neq\lambda_{j}$ for all $i\neq j\in[n]$ and $a\in A$.  Let
  $t(\bar{x},y)$ be the term $y+\sum_{i=1}^{n}\op_{i}(x_{i})$. We show that for
  all $a\in A$ and $b_{\lambda_{1}},\ldots,b_{\lambda_{n}}$ in different orbits,
  either $S(t(\bar{b},a))=t(\bar{b},a)$ or $t(\bar{b},a)=S^{k}(b_{\lambda})$ for
  some $\lambda<\kappa$ and $k\in\IZ$. Assume $S(t(\bar{b},a))\neq
  t(\bar{b},a)$. This implies that $b=t(\bar{b},a)\in R(M)$. Then, since
  $a=b-\sum_{i=1}^{n}\op_{i}(b_{\lambda_{i}})$, either $b$ is in the orbit of
  $b_{\lambda_{i}}$ for some $i\in[n]$ or $(b,\bar{b})$ is an $(x,-\bop)$-good
  tuple. This shows that $b=S^{k}(b_{\lambda})$ for some $\lambda<\kappa$ and
  $k\in\IZ$. Thus every element $\tilde{\str{A}}$ is of the form
  $a+\sum_{i=1}^{n}\op_{i}(b_{\lambda_{i}})$.\\
  \indent We now do the same job for elements in $\overline{\str{A}}$.
  \begin{claim}\label{claim-elt}
    Let $d\in\overline{\str{A}}$. Then there
    exist $a\in\tilde{\str{A}}$ and $n\in\IN^{>0}$ such that $nd=a$.
  \end{claim}
  \begin{proof}
    Let $X$ be the set $\{d\mid nd\in \tilde{A}\textrm{ for some
    }n\in\IN^{>0}\}$. We first notice that for all $\bar{d}\in X^{k}$ and
    $\bar{m}\in \IZ^{k}$, there is $n\in\IN$ such that
    $n(m_{1}d_{1}+\cdots+m_{k}d_{k})\in\tilde{A}$ (just take $n$ to be the
    product of the witnesses of the fact that $\bar{d}\in X^{k}$). So to
    conclude, it is enough to show that for all terms $t(\bar{x})$,
    $|x|=k$, and
    all $\bar{d}\in X^{k}$' either $t(\bar{d})\in \tilde{A}$ or there is
    $\bar{m}\in\IZ^{k}$ such that
    $t(\bar{d})=m_{1}d_{1}+\cdots+m_{k}d_{k}$. This is done by induction on the
    complexity of terms (the complexity being here the number of occurrences of
    the symbols $S$ and $S^{-1}$).\\
    \indent Let $t(\bar{x})$ be a term, $|x|=k$, and
    $\bar{d}\in X^{k}$ and
    assume that either $t(\bar{d})\in \tilde{A}$ or there is
    $\bar{m}\in\IZ^{k}$ such that $t(\bar{d})=m_{1}d_{1}+\cdots+m_{k}d_{k}$. We
    may assume that $t(\bar{d})=m_{1}d_{1}+\cdots+m_{k}d_{k}\notin\tilde{A}$, since
    $\tilde{\str{A}}$ is closed under $S$ and $S^{-1}$. We claim that
    $S(t(\bar{d}))=t(\bar{d})$. Indeed, since $\bar{d}\in X^{k}$, there exists
    $n\in\IN$ such that $nt(\bar{d})\in\tilde{A}$. Thus, if
    $S(t(\bar{d}))=t(\bar{d})$ is not true, then $t(\bar{d})\in R(M)$ and this
    imply that $t(\bar{d})$ is $(nx)$-good, in contradiction with our
    assumption that $t(\bar{d})\notin\tilde{A}$.
  \end{proof}
  Let us finally show that $\overline{\str{A}}\models T_{R}$. The only axiom
  that requires details is \ref{axiom-image} -- the defining axioms for the
  divisibility predicates are true since we took the divisible closure of
  $\tilde{\str{A}}$ and the others are universal and thus
  true in any substructure. So assume that
  $\overline{\str{A}}\models\Im_{[Q],D}(d_{1},\ldots,d_{n})$, where
  $[Q]\in(\IZ[X])^{n\times m}$ and $D$ is a set of divisibility conditions
  of size $m$. By Claim
  \ref{claim-elt}, we may assume that $d_{1},\ldots,d_{n}\in\tilde{\str{A}}$:
  for all $i\in[n]$, $d_{i}=a_{i}+\sum_{j=1}^{k}\op'_{ij}(d_{\lambda_{ij}})$. So
  we may also assume that $d\in A$. Since $\str{M}\models T_{R}$ and we can
  find $b_{1},\ldots,b_{m}\in R(M)$ such that
  \[\str{M}\models \bigwedge_{i\in [n]}\sum_{j\in[m]}\op_{Q_{ij}}(b_{j})=d_{i}
    \wedge\bigwedge_{i\in[m]}D_{\ell_{i}}(\op_{Q_{i}}(b_{i})+k_{i}).\]
  We may assume that for all $j\in[m]$ there exists $i\in[n]$ such that
  $\op_{Q_{ij}}$ is non-trivial
  (if for some $j\in[m]$ the terms $\op_{Q_{ij}}$ are trivial for all $i\in[n]$, we
  may replace, by \ref{axiom-r0} and \ref{axiom-operator}, $b_{j}$ by any
  $b'_{i}\in R(\IZ)$ such
  that $\str{Z}_{R}\models D_{\ell_{j}}(\op_{j}(b'_{j})+k_{j})$, which is
  possible since $\str{Z}_{R}\subset\str{A}$ and $\str{Z}_{R}\models
  T_{R}$). Now by construction of $\overline{\str{A}}$, for all $i\in[m]$,
  $b_{i}$ is in the orbit of some $b_{\lambda_{i}}$. This implies that
  $\bar{b}\in \overline{A}^{m}$, as desired.
\end{proof}
Let us show that any embedding $f:\str{A}\to\str{N}$ extends to an embedding
$\bar{f}:\overline{\str{A}}\to\str{N}$.
\begin{lemma}
  Let $f:\str{A}\to\str{N}$ be an $\L$-embedding. Then $f$ extends to an
  $\L$-embedding $\bar{f}:\overline{\str{A}}\to\str{N}$.
\end{lemma}
\begin{proof}
  Let $\L_{0}$ be the language $\{+,-,0,1,R\}\cup\L_{S}$. We first extend $f$ to
  an $\L_{0}$-embedding $\tilde{f}:\tilde{\str{A}}\to\str{N}$. Let $q$ be the
  partial type
  \begin{align*}
    &\{\op_{1}(x_{\lambda_{1}})+\cdots+\op_{n}(x_{\lambda_{n}})=
      f(a)\mid \str{M}\models
      \op_{1}(b_{\lambda_{1}})+\cdots+\op_{n}(b_{\lambda_{n}})=a,
      a\in A\}\\
    \cup&\{x_{\lambda}\neq f(a)\mid \lambda<\kappa,a\in A\}\\
    \cup& \{S^{k}(x_{\lambda_{1}})\neq x_{\lambda_{2}}\mid \lambda_{1}\neq\lambda_{2},z\in\IZ\}\\
    \cup& \{D_{\ell}(\op(x_{\lambda})+k)\mid \str{M}\models
          D_{\ell}(\op(b_{\lambda}+k))\}.
  \end{align*}
  We claim that $q$ is finitely consistent in $\str{N}$. Let $\Delta$ be a
  finite part of $q$. We may assume that the conjunction of the formulas in $\Delta$
  is of the form
  \begin{align*}
    &\bigwedge_{i\in
      I_{1}}\op_{i1}(x_{\lambda_{1}})+\cdots+\op_{in}(x_{\lambda_{n}})=f(a_{i})
    \wedge \bigwedge_{i\in[n]}D_{\ell_{i}}(\op_{i}(x_{\lambda_{i}})+k_{i})\\
    \wedge &\bigwedge_{i\in I_{2};j\in[n]}x_{\lambda_{j}}\neq f(a_{ij})
    \wedge \bigwedge_{i,j\in[n];k\in I_{3}}S^{k}(x_{\lambda_{i}})\neq x_{\lambda_{j}}.
  \end{align*}
  By \ref{axiom-image}, there exists $\bar{b}'\in R(N)^{n}$ such that
 \begin{align*}
    &\bigwedge_{i\in
      I_{1}}\op_{i1}(b'_{{1}})+\cdots+\op_{in}(b'_{{n}})=f(a_{i})
    \wedge \bigwedge_{i\in[n]}D_{\ell_{i}}(\op_{i}(b'_{{i}})+k_{i}).
 \end{align*}
 Assume towards a contradiction that $\bar{b}'$ is not a realization of
 $\Delta$. Then we have that, for some $i_{1}\in I_{2}$,
 $j_{1},i_{2},j_{2}\in[n]$ and $k\in I_{3}$,
 \[b'_{j_{1}}=f(a_{i_{1}j_{1}})\vee S^{k}(b'_{i_{2}})=b'_{j_{2}}.\]
 So, again using \ref{axiom-image}, we can find $\bar{b}''\in R(M)^{n}$ such
 that
  \begin{align*}
    &\bigwedge_{i\in
      I_{1}}\op_{i1}(b''_{{1}})+\cdots+\op_{in}(b''_{{n}})=a_{i}
    \wedge \bigwedge_{i\in[n]}D_{\ell_{i}}(\op_{i}(b''_{{i}})+k_{i})
    \wedge(b''_{j_{1}}=a_{i_{1}j_{1}}\vee S^{k}(b''_{i_{2}})=b''_{j_{2}}),
  \end{align*}
  in contradiction with the fact that $b_{\lambda_{1}},\ldots,b_{\lambda_{n}}$
  is a good tuple. Hence $q$ is finitely consistent in $\str{N}$ and so
  realized in an elementary extension $\str{N}^{*}$ of
  $\str{N}$ by some $(b'_{\lambda})_{\lambda<\kappa}$. Let us show that
  $(b'_{\lambda})_{\lambda<\kappa}$ is in $\str{N}$. Let $\lambda<\kappa$. By
  definition, $b_{\lambda}$ appears in a $\bar{\op}$-good tuple: there exist
  $b_{\lambda_2},\ldots,b_{\lambda_{n}}\in R(M)\backslash A$ and $a\in A$ such
  that
  $\op_{1}(b_{\lambda})+\op_{2}(b_{\lambda_{2}})+\cdots+\op_{n}(b_{\lambda_{n}})=a$.
  The same holds for $b'_{\lambda},b'_{\lambda_{2}},\ldots,b'_{\lambda_{n}}$ and
  $f(a)$. Furthermore, we have that
  $\str{N}\models\Im_{\bar{Q}}(f(a))$, where $\op_{i}=\op_{Q_{i}}$. Since
  $\str{N}\models T_{R}$, there
  are $d_{1},\ldots,d_{n}\in R(N)$ such that
  \[\sum_{i=1}^{n}\op_{i}(d_{i})=f(a).\]
  Hence, by Lemma \ref{lemma-sol-eq-orbits}, $b'_{\lambda}$ is in the orbit of
  $d_{i}$ for some $i\in[n]$: this shows that $b'_{\lambda}\in N$.\\
  \indent Since for all $\lambda_{1}\neq\lambda_{2}$ and all $z\in\IZ$, the
  formula $S^{k}(x_{\lambda_{1}})\neq x_{\lambda_{2}}$ is in $q$, we have that
  for  all $\lambda_{1}\neq\lambda_{2}$,
  $b'_{\lambda_{1}}\notin\Orb(b'_{\lambda_{2}})$. Likewise, we have
  that $b'_{\lambda}\notin f(A)$ for all $\lambda<\kappa$. This shows
  that $(b'_{\lambda})_{\lambda<\kappa}$ realizes the quantifier-free type of
  $(b_{\lambda})_{\lambda<\kappa}$ over $A$ in $\L_{0}$. Hence the map
  $\tilde{f}$ defined on $\tilde{\str{A}}$ by
  $a+\sum_{i=1}^{n}\op_{i}(b_{\lambda_{i}})\mapsto
  f(a)+\sum_{i=1}^{n}\op_{i}(b'_{\lambda_{i}})$ is an $\L_{0}$-embedding.\\
  \indent Now we extend $\tilde{f}$ to an $\L$-embedding
  $\bar{f}:\overline{\str{A}}\to \str{N}$.  Recall that for all
  $d\in\overline{\str{A}}\backslash\tilde{\str{A}}$, there exist $a\in\str{A}$,
  $\bar{\op}$ a tuple of non-trivial terms,
  $b_{\lambda_{1}},\ldots,b_{\lambda_{n}}$ and $n\in\IN^{>0}$ such that
  $nd=a+\sum_{i=1}^{n}\op_{i}(b_{\lambda_{i}})$. By construction $\tilde{f}(nd)$
  is divisible by $n$: by \ref{axiom-group} there exists a unique $d^{*}$ such
  that $\tilde{f}(nd)=nd^{*}$ (uniqueness follows from the fact that models of
  $T_{1}$ are torsionless). We extend $\tilde{f}$ by the rule
  $\bar{f}(d)=d^{*}$. So $\bar{f}$ respects the divisibility predicates. And
  since the $\Im$ predicates are definable by
  $\L_{0}\cup\{D_{n}\mid n\in\IN^{>1}\}$-formulas, we get that $\bar{f}$ is indeed
  an $\L$-embedding
\end{proof}
\subsubsection{$T_{R}$ is 1-e.c.}\label{section-1ec}
We will need an analogue of Proposition \ref{proposition-image-op}.
\begin{proposition}\label{proposition-op-nstd}
  Let $Q_{1},\ldots,Q_{k}\in\IZ[X]$. Then for all $\str{M}\models T_{R}$,
  $\{z\in\IN\mid \str{M}\models \exists\bar{x}\in R\;
  \op_{Q_{1}}(x_{1})+\cdots+\op_{Q_{k}}(x_{k})=z\}$ is not piecewise syndetic.
\end{proposition}
\begin{proof}
  This is an immediate consequence of Proposition \ref{proposition-image-op} and
  the following observation:
  \begin{align*}
    &\{z\in\IN\mid \str{M}\models \exists\bar{x}\in R\;
      \op_{Q_{1}}(x_{1})+\cdots+\op_{Q_{k}}(x_{k})=z\}\\
    =&\{z\in\IN\mid \str{Z}_{R,\L}\models \exists\bar{x}\in R\;
  \op_{Q_{1}}(x_{1})+\cdots+\op_{Q_{k}}(x_{k})=z\}.
  \end{align*}
\end{proof}
\begin{proposition}\label{prop-mc-1}
  Let $\str{M},\str{M}_{0}\models T_{R}$ such that $\str{M}_{0}\subset
  \str{M}$. Assume that $R(M_{0})=R(M)$. Then $\str{M}_{0}$ is 1-e.c. in
  $\str{M}$.
\end{proposition}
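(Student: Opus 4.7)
The plan is to verify the 1-e.c.\ criterion directly: given a quantifier-free $\L$-formula $\phi(x,\bar{a})$ with $\bar{a}\in M_{0}$ such that $\str{M}\models\phi(b,\bar{a})$ for some $b\in M$, we exhibit $c\in M_{0}$ with $\str{M}_{0}\models\phi(c,\bar{a})$. If $b\in M_{0}$ we are done, so assume $b\in M\setminus M_{0}$; the hypothesis $R(M_{0})=R(M)$ then forces $b\notin R(M)$, and axiom \ref{axiom-succ} (together with the axioms of $T_{2}^{R}$) gives $S^{k}(b)=b$ for every $k\in\IZ$. Consequently every $\L_{g}\cup\L_{S}$-term in $x$ evaluated at $b$ collapses to an $\L_{g}$-term of the form $mx+v$ with $m\in\IZ$ and $v$ an $\L_{g}$-term in $\bar{a}$.

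Putting $\phi$ in disjunctive normal form and fixing a disjunct realized by $b$, each literal becomes (after the above collapse) one of: (i) $mx+v=0$ or $D_{n}(mx+v)$ (with negations), (ii) $R(mx+v)$ (with negation), or (iii) $\Sigma_{C,D}(m_{1}x+v_{1},\ldots,m_{k}x+v_{k})$ (with negation). The key observation, enabled by $R(M_{0})=R(M)$, is that positive literals of types (ii) and (iii) \emph{absorb} into type (i): a positive $\Sigma_{C,D}$-literal at $b$ provides, via axiom \ref{axiom-image}, witnesses $\bar{y}\in R(M)=R(M_{0})$ forcing each relevant $m_{i}b+v_{i}$ to be a specific $\L_{g}$-element of $M_{0}$ (namely $\sum_{j}\op_{ij}(y_{j})$), thus turning the literal into $\L_{g}$-equations; similarly a positive $R$-literal pins $mb+v$ to a specific element of $R(M_{0})\subseteq M_{0}$. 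What remain are $\L_{g}$-constraints on $c$ together with finitely many \emph{avoidance} constraints of the form $mc+v\notin R(M_{0})$ or $(m_{1}c+v_{1},\ldots)\notin\Sigma_{C_{j},D_{j}}(M_{0})$.

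Satisfying $\phi$ thus reduces to finding $c\in M_{0}$ in a certain nonempty $\L_{g}$-definable set $X\subseteq M_{0}$ (nonempty because $b\in X$ in $\str{M}$ and $\str{M}_{0}\preceq\str{M}$ in $\L_{g}$, by quantifier elimination for $T_{1}$) while simultaneously avoiding finitely many ``bad'' sets, each essentially of the form $\{z+\op_{1}(y_{1})+\cdots+\op_{k}(y_{k})\mid \bar{y}\in R(M_{0})\}$ (up to an affine shift); here $X$ is a union of translates of cosets of subgroups of $M_{0}$. The main obstacle, and the final step, is the model-theoretic analogue of Theorem \ref{theorem-cover-z} inside $\str{M}_{0}$: such cosets cannot be covered by finitely many ``$\Sigma$-image'' sets. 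This should be obtained by transcribing the proofs of Proposition \ref{proposition-image-op} and Theorem \ref{theorem-cover-z} inside $\str{M}_{0}$, the crucial axiomatic input being Proposition \ref{prop-sol-inhom} (available in any model of $T_{R}$), which plays the role of Proposition \ref{proposition-sol-eq} in the original argument.
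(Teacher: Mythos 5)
Your proposal is correct and follows essentially the same route as the paper: use $R(M_{0})=R(M)$ to discard positive $R$- and $\Sigma$-literals involving $x$ nontrivially, reduce to $\L_{g}$-conditions plus finitely many avoidance conditions, produce a candidate in $M_{0}$ by model completeness of $T_{1}$, and adjust it using the fact that $\Sigma$-image sets cannot cover an arithmetic progression. The paper implements your final step concretely by translating the candidate $b_{0}$ by standard $m\in\IN$ and intersecting a cofinite set, a piecewise syndetic set of divisibility-preserving translates, and the complement of a non-piecewise-syndetic set (Proposition \ref{proposition-image-op} together with Brown's Lemma), rather than transferring all of Theorem \ref{theorem-cover-z} into $\str{M}_{0}$.
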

\begin{proof}
  Let $\phi(x,\bar{y})$ be a quantifier-free formula such that
  $\str{M}\models \phi(b,\bar{a})$ for some $b\in M\backslash M_{0}$ and
  $\bar{a}\in M_{0}$. We will show that there exists $b_{0}\in M_{0}$ such that
  $\str{M}_{0}\models\phi(b_{0},\bar{a})$. Let us simplify $\phi$.\\
  \indent First we show that for all $\L$-terms $t(x,\bar{y})$, $\bar{y}$ of
  size $n$, for all $b\in M\setminus M_{0}$ and all $\bar{a}\in M_{0}^{n}$,
  there are $n\in\IZ$ and $a\in M_{0}$ such that $t(b,\bar{a})=nb+a$. It is
  enough to show that for all $n\in\IZ\setminus\{0\}$, $b\in M\setminus M_{0}$
  and $a\in M_{0}$, $S(nb+a)=nb+a$. But this is the case since
  $nb+a\notin M_{0}$ and $R(M)=R(M_{0})\subset M_{0}$.  In particular for all
  $b\in M\setminus M_{0}$, $n\in\IZ$ and $a\in M_{0}$, $\str{M}\models R(nb+a)$
  if and only if $n=0$ and
  $\str{M}\models R(a)$.\\
  \indent Now we look at the atomic formulas satisfied by elements in
  $M\setminus M_{0}$ with parameters in $M_{0}$. Let $b\in M\setminus M_{0}$,
  $n_{1},\ldots,n_{k}\in\IZ$, $a_{1},\ldots,a_{k}\in M_{0}$,
  $[Q]\in(\IZ[X])^{k\times m}$ and $D$ be a set of divisibility conditions of
  size $m$. Since $R(M)=R(M_{0})$, we have
  $\str{M}\models \Im_{[Q],D}(n_{1}b+a_{1},\ldots,n_{k}b+a_{k})$ if and only
  if $n_{1}=\cdots=n_{k}=0$ and $\str{M}\models\Im_{[Q],D}(\bar{a})$.
  Likewise, for all $n\in\IZ$ and $a\in M_{0}$, we have $\str{M}\models
  nb+a=0$ if and only if $n=0$ and $\str{M}\models a=0$.\\
  \indent Thus, after writing $\phi(x,\bar{y})$ in its equivalent disjunctive
  normal form, we may select a conjunctive clause satisfied by $(b,\bar{a})$ and
  then assume that $\phi(x,\bar{a})$ is of the form
  \begin{align*}
    &\bigwedge_{i\in I_{1}} n_{i}x+a'_{i}\neq 0\\
    \wedge &\bigwedge_{i\in I_{2}} \neg\Im_{[Q]_{i},D_{i}}(n_{i1}x+a'_{i1},
             \ldots,n_{im_{i}}x+a'_{im_{i}})\\
    \wedge &\bigwedge_{i\in I_{3}} D_{\ell_{i}}(n_{i}x+k_{i}),
  \end{align*}
  where for all $i\in I_{1}$, $n_{i}\in \IZ\setminus\{0\}$ and
  $a'_{i}=t_{i}(\bar{a})$ for some $\L$-term $t_{i}(\bar{y})$,
  for all $i\in I_{3}$, $m_{i}\in\IN^{>0}$,
  $\bar{n}_{i}\in(\IZ\setminus\{0\})^{m_{i}}$ and $a'_{ij}=t_{ij}(\bar{a})$ for
  some $\L$-term $t_{ij}(\bar{y})$
  and for all $i\in I_{3}$, $n_{i}\in\IZ\setminus\{0\}$, $\ell_{i}\in\IN^{>1}$
  and $0\le k_{i}<\ell_{i}$.\\
  \indent Let us finally show that $\phi(M_{0},\bar{a})$ is not empty.  By model
  completeness of $\Th(\IZ,+,0,1,D_{n}\mid 1<n\in\IN)$, there exists
  $b_{0}\in M_{0}$ such that
  \[\str{M}_{0}\models\bigwedge_{i\in
      I_{1}}n_{i}b_{0}+a'_{i}\neq 0\wedge\bigwedge_{i\in
      I_{3}}D_{\ell_{i}}(n_{i}b_{0}+a'_{i}).\]
  However, $\str{M}_{0}$ may not satisfy $\phi(b_{0},\bar{a})$. But this can be
  overcome in the following way. Let
  \[X_{1}=\left\{m\in\IN\,\left|\, \str{M}_{0}\models\bigwedge_{i\in
          I_{3}}n_{i}(b_{0}+m)+a'_{i}\neq 0\right.\right\},\]
  \[X_{2}=\left\{m\in\IN\,\left|\, \str{M}_{0}\models\bigwedge_{i\in
          I_{2}}\neg\Im_{[Q]_{i},D_{i}}(n_{i1}(b_{0}+m)+a'_{i1},\ldots)\right.\right\},\]
  and
  \[
    X_{3}=\left\{m\in\IN\,\left|\, \str{M}_{0}\models\bigwedge_{i\in
          I_{3}}D_{\ell_{i}}(n_{i}m)\right.\right\}.
  \]
  We want to show that the set $X=X_{1}\cap X_{2}\cap X_{3}$ is not empty.
  Suppose otherwise that $X=\emptyset$. This implies that $X_{3}\subset
  \IN\setminus (X_{1}\cap X_{2})$. But then, since $X_{3}$ is piecewise syndetic,
  $\IN\setminus(X_{1}\cap X_{2})$ is piecewise syndetic. Hence, by Brown's Lemma,
  $\IN\setminus X_{2}$ is piecewise syndetic, $\IN\setminus X_{1}$ being
  finite. But, by Proposition \ref{proposition-op-nstd}, this is not possible since
  $\IN\setminus X_{2}$ is in the image of a sum of terms of the form $\op_{Q}$.
\end{proof}

In order to establish that $T_{R}$ is 1-e.c., we first show that a conjunction
of $\Im$ predicates is equivalent to an $\Im$ predicate.
\begin{lemma}\label{lemma-im-conj}
  For all $[Q]_{1}\in\IZ^{n_{1}\times m_{1}}$, \ldots,
  $[Q]_{\ell}\in\IZ^{n_{\ell}\times m_{\ell}}$ and sets of divisibility conditions
  $D_{1},\ldots,D_{\ell}$, there exists $[Q]\in
  \IZ^{(n_{1}\cdots n_{\ell})\times(m_{1}\cdots m_{\ell})}$ and a set of divisibility
  conditions $D$ such that
  \[
    T_{R}\models\forall \bar{y}_{1},\ldots,\bar{y}_{\ell}\in R
    \left(
      \bigwedge_{i\in[\ell]}\Im_{[Q]_{i},D_{i}}(\bar{y}_{i})
      \leftrightarrow \Im_{[Q],D}(\bar{y}_{1},\ldots,\bar{y}_{\ell})
    \right).
  \]
\end{lemma}
\begin{proof}
  Just take $[Q]=[Q]_{1}\oplus\cdots\oplus[Q]_{\ell}$ ($\oplus$ denotes the
  direct sum) and $D=D_{1}\sqcup\cdots\sqcup D_{\ell}$.
\end{proof}

\begin{theorem}\label{thm-mc}
  The theory $T_{R}$ is 1-e.c.
\end{theorem}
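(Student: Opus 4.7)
The plan is to extend the argument of Proposition \ref{prop-mc-1} by removing the hypothesis $R(M_0)=R(M)$. Let $\str{M}_0\subseteq\str{M}$ be models of $T_R$, let $\phi(x,\bar{a})$ be a quantifier-free $\L$-formula with $\bar{a}\in M_0$, and suppose $\str{M}\models\phi(b,\bar{a})$ for some $b\in M\setminus M_0$. We seek $b_0\in M_0$ realizing $\phi(x,\bar{a})$ in $\str{M}_0$, and proceed by cases on whether $\str{M}\models R(b)$.

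If $\str{M}\models\neg R(b)$, axiom \ref{axiom-succ} collapses $S^k(b)=b$ for all $k\in\IZ$, so every $\L$-term built from $b$ and $\bar{a}$ simplifies to a linear combination $nb+c$ with $c\in M_0$. Any atomic subformula $R(nb+c)$ or $\Sigma_{C,D}(nb+c,\ldots)$ with $n\neq 0$ would, by uniqueness of division in $T_1$, force $nb+c$ to lie in $M_0$ (and hence $b$ into $M_0$, contradicting $b\notin M_0$) unless the corresponding $R$-witness lies in $R(M)\setminus R(M_0)$; in the latter event one absorbs the required $R$-witnesses into $\str{M}_0$ by passing to the algebraically prime extension of $M_0$ by those witnesses via Lemma \ref{lemma-alg-prime}, after which the formula has the shape treated in Proposition \ref{prop-mc-1}. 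The Presburger model-completeness of $\Th(\IZ,+,0,1,D_n\mid n>1)$ combined with the non-piecewise-syndeticity statement of Proposition \ref{proposition-image-op} then produces the desired $b_0\in M_0$.

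The genuinely new case is $b\in R(M)\setminus R(M_0)$. Here I would normalize the atomic $R$-subformulas of $\phi(b,\bar{a})$ into three kinds: positive and negative $\Sigma_{C,D}$-atoms applied to tuples built from $\bar{a}$ and shifts $S^k(b)$; equations and inequations $\op(b)+a=0$ possibly combined with divisibility constraints $D_m(\op(b)+a)$; and orbit conditions $S^{k_\ell}(b)=a_\ell$ and their negations. Positive orbit conditions pin $b$ to a finite $M_0$-definable set (either producing $b_0$ or contradicting $b\notin M_0$). The equations are controlled by Proposition \ref{prop-sol-inhom} and Lemma \ref{lemma-inj-op}: they either confine $b$ to a finite $M_0$-definable set or reduce to $\op(b)=0$, which by axiom \ref{axiom-operator} has either finitely or cofinitely many $R$-solutions. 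Each $\Sigma_{C,D}(\bar{t}(\bar{a}))$ with $\bar{t}(\bar{a})\in M_0$ holds in $\str{M}_0$ iff it holds in $\str{M}$, since $\Sigma_{C,D}\in\L$ and the inclusion is an $\L$-embedding; hence by axiom \ref{axiom-image} the corresponding $R$-witnesses exist in $R(M_0)$. What remains is to choose $b_0\in R(M_0)$ avoiding finitely many orbits, satisfying a finite list of divisibility constraints, and missing finitely many forbidden sets of the form $\{z+\op_1(n_1)+\cdots+\op_k(n_k)\mid\bar{n}\in\IN\}$; by Proposition \ref{proposition-image-op} those forbidden sets are not piecewise syndetic, so their union cannot cover an infinite arithmetic progression in $R(M_0)$ and a suitable $b_0$ exists.

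The main obstacle is precisely the case $b\in R(M)\setminus R(M_0)$: the realization $b$ lies entirely outside $\str{M}_0$, so a fresh witness in $R(M_0)$ must be constructed rather than merely transported. The two decisive inputs are axiom \ref{axiom-image}, which, because $\str{M}_0\models T_R$, forces $R(M_0)$ to contain witnesses for every $\Sigma_{C,D}$-atom asserted over $M_0$, and Proposition \ref{proposition-image-op}, which rules out the forbidden shift-sets covering any arithmetic progression; together they provide enough room in $R(M_0)$ for the required $b_0$.
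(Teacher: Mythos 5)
Your case split on whether $\str{M}\models R(b)$ does not match the actual structure of the problem, and this creates two genuine gaps. First, in the case $\neg R(b)$ the element $b$ still interacts with the new part $R(M)\setminus R(M_{0})$ through the predicates $\Sigma_{C,D}$: whether $\Sigma_{C,D}(nb+c,\ldots)$ holds is witnessed by tuples in $R(M)$ that may lie outside $R(M_{0})$, and your proposed remedy --- pass to the algebraically prime extension of $\str{M}_{0}$ by those witnesses and invoke Proposition \ref{prop-mc-1} --- only yields a realization in that extension, not in $M_{0}$. Pushing such a realization down from $\overline{\langle M_{0},\text{witnesses}\rangle}$ to $\str{M}_{0}$ is exactly an instance of the case $R(M_{0})\subsetneq R(M')$ that you are trying to prove, so the reduction is circular. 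The paper avoids this by working with the structure of $\str{M}$ itself: by Lemma \ref{lemma-alg-prime} one may assume $\str{M}=\overline{\langle M_{0},R(M)\rangle}$, so every element of $M$ has the form $a+\sum_{i}\op_{i}(b_{i})$ with $a\in M_{0}$ and the $b_{i}\in R(M)\setminus R(M_{0})$ in different orbits; one then proves the stronger statement that any quantifier-free formula satisfied by such a tuple $\bar{b}$ over $M_{0}$ is realized by a tuple in $R(M_{0})$. Your dichotomy never produces this tuple of new $R$-elements, which is where all the work lies.

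Second, even in your case $b\in R(M)\setminus R(M_{0})$, the atoms you do not treat are precisely the hard ones: positive and negative $\Sigma_{C,D}$-atoms whose arguments involve $b$ rather than only $\bar{a}$. The paper eliminates these using Axiom \ref{axiom-eq} together with Lemma \ref{lemma-sol-eq-orbits}, rewriting a condition $\Sigma_{C_{i},D_{i}}\bigl(\sum_{j}\op_{ij1}(b_{j})+a_{i1},\ldots\bigr)$ as a disjunction of conditions of the form $\Sigma_{C,D'}(\bar{a})\wedge\bigwedge_{i}\op_{i}(b_{i})=0\wedge\bigwedge_{i}D_{\ell_{i}}(\op'_{i}(b_{i})+k_{i})$ and then removing the equations via Axiom \ref{axiom-operator}; this is the technical core of the proof and is absent from your argument. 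Finally, your concluding appeal to Proposition \ref{proposition-image-op} is misplaced in this case: piecewise syndeticity is the right tool only in the situation of Proposition \ref{prop-mc-1}, where one perturbs a Presburger witness by translation along $b_{0}+\IN$; when the witness must be found inside $R(M_{0})$, which is not an arithmetic progression, the existence of the required elements comes instead from the fact that $\Sigma_{C,D}(\bar{a})$ is an atomic $\L$-formula, hence transfers from $\str{M}$ to $\str{M}_{0}$, combined with Axiom \ref{axiom-image} applied in $\str{M}_{0}$.
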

\begin{proof}
  Let us show that for all $\str{M},\str{M}_{0}\models T_{R}$ such that
  $\str{M}_{0}\subset \str{M}$, then $\str{M}_{0}$ is 1-e.c. in $\str{M}$. Let
  $\str{M},\str{M}_{0}\models T_{R}$ such that $\str{M}_{0}\subset \str{M}$. Two
  cases are possible: either $R(M_{0})=R(M)$ or $R(M_{0})\subsetneq R(M)$. The
  first case has been proved in
  Proposition \ref{prop-mc-1}. So let us assume that we are in the second case.\\
  \indent By Lemma \ref{lemma-alg-prime}, we may assume that
  $\str{M}=\overline{\str{A}}$ where $\str{A}$ is the substructure of $\str{M}$
  generated by $M_{0}\cup R(M)$. Recall that by the proof of
  Lemma \ref{lemma-alg-prime}, any element $d$ of $\str{M}$ is such that
  $nd=a+\sum_{i=1}^{\ell}\op_{i}(b_{i})$, where $n\in\IN$, $a\in M_{0}$ and
  $b_{1},\ldots,b_{\ell}\in R(M)\backslash R(M_{0})$ are in different orbits. Our
  strategy is to
  establish that $\str{M}_{0}$ is 1-e.c. in $\str{M}$ from the fact that for all tuple
  $\bar{b}$ of elements of $R(M)\backslash R(M_{0})$ in different orbits, all
  $\bar{a}\in M_{0}$ and all $\phi(\bar{x},\bar{y})$,
  $\str{M}\models \phi(\bar{b},\bar{a})$ implies
  $\str{M}_{0}\models\exists\bar{x}\in R\, \phi(\bar{x},\bar{a})$.\\
  \indent Let us first look at terms evaluated at $d\in M$.
  \begin{claim}
    Let $t(x,\bar{y})$ be an $\L$-term, where $\bar{y}$ is a tuple of size
    $k$. Then for all $d\in M$ and $\bar{a}\in M_{0}^{k}$, there are $m\in\IZ$
    and $a\in M_{0}$ such that $t(d,\bar{a})=md+a$.
  \end{claim}
  \begin{proof}
    Let $d\in M$, $n\in\IN^{>0}$, $a\in M_{0}$ and $b_{1},\ldots,b_{\ell}\in
    R(M)$ in different orbits such that
    $nd=a+\sum_{i=1}^{\ell}\op_{i}(b_{i})$, with $\op_{i}=\op_{Q_{i}}$ for some
    $Q_{i}\in\IZ[X]$. To prove the claim, it is enough to
    show that $S(md+a')=md+a'$ for all $m\in\IZ\setminus\{0\}$ and $a'\in
    M_{0}$, unless $m=0$ or $d\in M_{0}$. Without loss of generality, we assume
    that $\op_{i}$ is non-trivial for all $i\in[\ell]$.
    Assume $md+a'=b\in R(M)$, $m\neq 0$. Then we have
    that $n(md+a')=ma+a'+\sum_{i=1}^{\ell}m\op_{i}(b_{i})=nb$. Thus
    \[\str{M}\models \Im_{nQ,-m\bar{Q}}(ma+na')\]
    and since $\str{M}_{0}\subset\str{M}$ and $\str{M}_{0}\models T_{R}$, there
    are $b',b'_{1},\ldots,b'_{\ell}\in R(M_{0})$ such that
    \[nb-\sum_{i=1}^{\ell}m\op_{i}(b_{i})-
      \left(nb'-\sum_{i=1}^{\ell}m\op_{i}(b'_{i})\right)=0.\]
    But by Lemma \ref{lemma-sol-eq-orbits}, this implies that
    $b,b_{1},\ldots,b_{\ell}\in M_{0}$. In particular $d\in M_{0}$.
  \end{proof}
  \indent Now, let $\phi(x,\bar{y})$ an $\L$-formula, with $\bar{y}$ of size
  $k$, such that $\str{M}\models\phi(d,\bar{a})$, for some $d\in M\setminus
  M_{0}$ and $\bar{a}\in M_{0}^{k}$. Using the previous claim, we may assume
  that $\phi(x,\bar{a})$ is of the form
  \begin{align*}
    &\bigwedge_{i\in I_{1}}m_{i}x+a'_{i}\neq 0\wedge
      \bigwedge_{i\in I_{2}}D_{\ell_{i}}(m_{i}x+s_{i})\\
    \wedge&\bigwedge_{i\in
            I_{3}}\Im_{[Q]_{i},D_{i}}(m_{i1}x+a'_{i1},\ldots,m_{ik_{i}}x+a'_{ik_{i}})\\
    \wedge&\bigwedge_{i\in
            I_{4}}\neg\Im_{[Q]_{i},D_{i}}(m_{i1}x+a'_{i1},
            \ldots,m_{ik_{i}}x+a'_{ik_{i}}),
  \end{align*}
  where, for all $i\in I_{1}\cup I_{2}$, $a'_{i}=t_{i}(\bar{a})$ for some
  $\L$-term $t_{i}(\bar{y})$, $m_{i}\in\IZ\setminus\{0\}$,
  $\ell_{i}\in\IN^{>1}$, $0\le s_{i}<\ell_{i}$ and for all $i\in I_{3}\cup
  I_{4}$, $k_{i}\in\IN^{>0}$, $a'_{ij}=t_{i}(\bar{a})$ for some $\L$-term
  $t_{ij}(\bar{y})$, $\bar{m}_{i}\in(\IZ\setminus\{0\})^{k_{i}}$,
  $[Q]_{i}\in(\IZ[X])^{k_{i}\times k'_{i}}$ and $D_{i}$ is a set of divisibility
  conditions. We may also assume that $|I_{3}|\le 1$ by Lemma \ref{lemma-im-conj}.\\
  \indent Since $d\in M\setminus M_{0}$, $nd=a+\sum_{i=1}^{\ell}\op_{i}(b_{i})$, for
  some $n\in\IN^{>0}$, $a\in M_{0}$, $b_{1},\ldots,b_{\ell}$ in different orbits
  and $\op_{1},\ldots,\op_{\ell}$ non-trivial. Since
  $D_{n_{1}n_{2}}(n_{1}x)\leftrightarrow D_{n_{2}}(x)$ and
  $\Im_{[Q],D}(\bar{y})\leftrightarrow\Im_{n[Q],D}(n\bar{y})$, instead of
  looking at $\phi(x,\bar{a})$ we may look at the formula
  $\tilde{\phi}(\bar{x},\bar{a})$ defined by
  \begin{align*}
    &\bigwedge_{i\in I_{1}}m_{i}\left(a+\sum_{i=1}^{\ell}\op_{i}(x_{i})\right)+na'_{i}\neq 0\wedge
      \bigwedge_{i\in I_{2}}D_{n\ell_{i}}\left(m_{i}\left(a+\sum_{i=1}^{\ell}\op_{i}(x_{i})\right)+ns_{i}\right)\\
    \wedge&\bigwedge_{i\in
            I_{3}}\Im_{n[Q]_{i},D_{i}}\left(m_{i1}\left(a+\sum_{i=1}^{\ell}\op_{i}(x_{i})\right)+na'_{i1},
            \ldots,m_{ik_{i}}\left(a+\sum_{i=1}^{\ell}\op_{i}(x_{i})\right)+na'_{ik_{i}}\right)\\
    \wedge&\bigwedge_{i\in
            I_{4}}\neg\Im_{n[Q]_{i},D_{i}}\left(m_{i1}\left(a+\sum_{i=1}^{\ell}\op_{i}(x_{i})\right)+na'_{i1},
            \ldots,m_{ik_{i}}\left(a+\sum_{i=1}^{\ell}\op_{i}(x_{i})\right)+na'_{ik_{i}}\right).
  \end{align*}
  Furthermore, we may replace
  \[
    \bigwedge_{i\in
      I_{2}}D_{n\ell_{i}}\left(m_{i}\left(a+\sum_{i=1}^{\ell}\op_{i}(x_{i})\right)+ns_{i}\right)
  \]
  by
  \[
    \bigwedge_{i\in
      [\ell]}D_{\ell'_{i}}\left(m'_{i}\op_{i}(x_{i})+s'_{i}\right),
  \]
  where for all $i\in [\ell]$, $\ell_{i}\in\IN^{>1}$,
  $m'_{i}\in\IZ$ and $0\le s'_{i}<\ell'_{i}$. Finally, by Lemma
  \ref{lemma-dec-eq} and Corollary
  \ref{corollary-im}, we may assume that $\tilde{\phi}(\bar{x},\bar{a})$ is
  of the form
  \begin{align*}
      &\bigwedge_{i\in[n]}\bigwedge_{j\in J}D_{\ell_{ij}}(\op_{Q_{ij}}(S^{k_{j}}(x_{i}))+k_{ij})\\
    \wedge&\bigwedge_{(i,j)\in K_{1}}\op_{Q'_{j}}(x_{i})=0
            \wedge\bigwedge_{(i,j)\in K_{2}}\op_{Q'_{j}}(x_{i})\neq 0
    \wedge\bigwedge_{i\in I}\bar{x}_{J_{i}}\notin F_{i},
  \end{align*}
  where, for all $i\in I$, $F_{i}$ is a finite set of $|J_{i}|$-tuples in
  $M_{0}$. But then, by \ref{axiom-cong} and \ref{axiom-operator}, we may find a
  realization $\bar{b}_{0}$ of $\tilde{\phi}(\bar{x},\bar{a})$ in $R(M_{0})$, as
  desired.
\end{proof}
\subsection{Superstability}
From the quantifier elimination of $T_{R}$, we deduce, by means of counting of
types, that it is superstable.
\begin{theorem}\label{theorem-sup2}
  The theory $T_{R}$ is superstable.
\end{theorem}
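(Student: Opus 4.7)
The plan is to derive superstability directly from quantifier elimination (Theorem \ref{thm-eq}) by a type-counting argument: I aim to show that $|S_{1}(A)| \leq |A|$ for every set of parameters $A$ with $|A| = \lambda \geq 2^{\aleph_{0}}$. By QE, a complete $1$-type $p(x) \in S_{1}(A)$ is determined by the set of atomic and negated atomic $\L$-formulas with parameters from $A$ that it contains, so the task reduces to enumerating this quantifier-free data for a realization $b$.

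First I would decompose $\tp{b/A}$ into five pieces: (i) the quantifier-free $\L_{g}$-type of $b$ over $A$; (ii) the truth value of $R(b)$; (iii) in case $R(b)$ holds, whether $b = S^{k}(a)$ for some $a \in R(A)$ and $k \in \IZ$, or $b$ lies in a fresh $S$-orbit; (iv) for each operator $\op$, the truth of $\op(b) = 0$ together with the divisibility conditions $D_{m}(\op(b) + k)$; and (v) the truth of all $\Sigma_{C,D}$-formulas whose arguments are terms built from $b$ and elements of $A$. The first four contributions are easy to bound: since the Presburger reduct $(\IZ,+,-,0,1,D_{n})_{n>1}$ is superstable (\cite[Theorem 15.4.4]{rothmaler}), (i) yields at most $\lambda$ possibilities; (ii) is binary; (iii) contributes $\lambda \cdot \aleph_{0} + 1 = \lambda$; and by Axiom \ref{axiom-operator}, for each of countably many operators $\op$ the truth value of $\op(b) = 0$ is determined by finitely many thresholds, so together with the countable family of congruence conditions (iv) contributes at most $2^{\aleph_{0}} \leq \lambda$.

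The main obstacle is (v). Here I would invoke Claim \ref{claim-terms} and the analysis of Section \ref{subsection-cons-t}: every term $t(b,\bar{a})$ with $\bar{a} \in A$ simplifies to the form $\op(b) + a_{0}$ with $\op$ an operator and $a_{0} \in A$, so a generic $\Sigma_{C,D}$-condition involving $b$ is of the form $\Sigma_{C,D}(\op_{1}(b) + a_{1}, \ldots, \op_{n}(b) + a_{n})$. By Axiom \ref{axiom-image} and Proposition \ref{prop-terms-substructures}, if $b$ lies in a new orbit then any such $\Sigma_{C,D}$-condition witnessed outside the substructure generated by $A \cup \{b\}$ is excluded by Lemma \ref{lemma-sol-eq-orbits}; what remains is already determined by (i)--(iv) together with the $\Sigma_{C,D}$-type of $\bar{a}$ over $A$. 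If instead $b \in \Orb(a)$ for some $a \in R(A)$, then every term $\op(b) + a_{i}$ is an element of the substructure generated by $A$, so (v) is already encoded in the $\Sigma_{C,D}$-theory of $A$.

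Summing the contributions gives $|S_{1}(A)| \leq \lambda$, hence $\lambda$-stability for all $\lambda \geq 2^{\aleph_{0}}$, which is exactly superstability of $T_{R}$. The delicate point throughout is step (v): one must use the rigidity of non-degenerate solutions supplied by Axiom \ref{axiom-eq} and Lemma \ref{lemma-sol-eq-orbits} to rule out the a priori $2^{\lambda}$ many choices, and to verify that $n$-types for $n > 1$ add nothing new, since their $\Sigma_{C,D}$-content is controlled by the same orbit/operator dichotomy applied jointly to the tuple.
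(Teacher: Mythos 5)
Your overall strategy (quantifier elimination plus type counting, with the $\Sigma_{C,D}$-data controlled by Axiom \ref{axiom-eq} and Lemma \ref{lemma-sol-eq-orbits}) is the same as the paper's, and your treatment of pieces (i)--(iv) together with the two subcases of (v) in which $R(b)$ holds matches the paper's ``case (1)''. However, there is a genuine gap: your analysis of the $\Sigma_{C,D}$-part only covers realizations $b$ with $R(b)$ (fresh orbit, or orbit of some $a\in R(A)$). The case where $R(mb+a)$ fails for every $m\in\IZ\backslash\{0\}$ and $a\in A$ --- which contains the generic type and is where the counting is hardest --- is not addressed at all. There, by Axiom \ref{axiom-succ} one has $S(b)=b$, so every term reduces to $mb+a$ and the relevant atomic formulas are $\Sigma_{\bar{\op}}(mb+a)$ with $a$ ranging over \emph{all} of $A$; a priori this is $2^{|A|}$ many truth assignments, and nothing in (i)--(iv), nor Claim \ref{claim-terms} (which is stated only for tuples in $R(M)\backslash R(M_{0})$), reduces this count. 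A secondary issue: your decomposition tracks only the truth value of $R(b)$, whereas the correct dichotomy is whether $R(mb+a)$ holds for \emph{some} $m\neq 0$ and $a\in A$, after which one changes variables to reduce to $b\in R$.

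The paper closes the missing case with a separate claim: for each tuple $\bar{\op}$ of operators and each $m\neq 0$ there is \emph{at most one} $a_{\bar{\op}}\in A$ such that $\sum_{i}\op_{i}(x_{i})=mb+a_{\bar{\op}}$ admits a non-degenerate solution in $R\backslash R(A)$ (proved via Lemma \ref{lemma-sol-eq-orbits} applied to $\Sigma_{\bar{\op},-\bar{\op}}(a_{\bar{\op}}-a')$). Consequently $\Sigma_{\bar{\op}}(mb+a)$ holds if and only if a disjunction of conditions of the form $\Sigma_{\bar{\op}_{I}}(mb+a_{\bar{\op}_{I}})\wedge\Sigma_{\bar{\op}_{[n]\backslash I}}(a-a_{\bar{\op}_{I}})$ holds, and the second conjunct is decided inside the model $A$, so the whole $\Sigma$-part of the type is determined by the assignment $\bar{\op}\mapsto a_{\bar{\op}}$ together with data already counted. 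This is the step your proposal is missing; without it (or a substitute) the bound $|S_{1}(A)|\le\max\{|A|,2^{\aleph_{0}}\}$ does not follow.
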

\begin{proof}
  Let $\str{C}$ be a monster model of $T_{R}$ and let $A\subset C$ be a small
  set of parameters. We want to show that
  $|S_{1}(A)|\le\max\{2^{\aleph_{0}},|A|\}$. Without loss of generality, we may
  assume that $A$ is the domain of a model. By quantifier elimination (see
  Theorem \ref{thm-eq}), any type $p(x)$ over $A$ is determined by the set of
  atomic formulas it contains. Let $\L_{1}=\L_{g}\cup\L_{S}$ and $\L_{2}$ be
  $\L\backslash\{D_{n}\mid n>1\}$. Let $p_{|\L_{i}}$ denote the restriction of $p$
  to $\L_{i}$, so that $p(x)=p_{|\L_{1}}(x)\cup p_{|\L_{2}}(x)$. We may assume
  that $p(x)$ does not contain a formula of the form $x=a$ for some $a\in M$. We
  consider two cases:
  \begin{enumerate}[label=(Case \arabic*)]
  \item\label{case-1} there exist $m\in\IZ\backslash\{0\}$ and $a\in A$ such
    that $R(mx+a)\in p(x)$;
  \item\label{case-2} for all $m\in\IZ\backslash\{0\}$ and all $a\in A$,
    $R(mx+a)\notin p(x)$.
  \end{enumerate}
  \begin{claim}
    Let $t(x,\bar{y})$ be an $\L$-term, with $\bar{y}$ of size $n$. Then for all
    $d\in C\setminus A$ and $\bar{a}\in A^{n}$ one of the following holds:
    \begin{enumerate}
    \item if $md+a=b\in R(C)$ for some $m\in\IZ\setminus\{0\}$ and $a\in A$,
      then there exist $Q\in\IZ[X]$, $m'\in\IZ$ and $a'\in A$ such that
      $t(d,\bar{a})=\op_{Q}(b)+m'x+a'$;
    \item if for all $m\in\IZ\setminus\{0\}$ and all $a\in A$,
      $md+a\notin R(C)$, then there exist $m'\in\IZ$ and $a'\in A$ such that
      $t(d,\bar{a})=m'd+a'$.
    \end{enumerate}
  \end{claim}
  \begin{proof}
    Let $d\in C\setminus A$.
    \begin{enumerate}
    \item Assume $md+a=b\in R(C)$ for some $m\in\IZ\setminus\{0\}$ and $a\in
      A$. It is enough to show that for all $Q\in\IZ[X]$
      $m'\in\IZ\setminus\{0\}$ and all
      $a'\in A$, if $\op_{Q}(b)+m'b+a'=b'\in R(C)\setminus R(A)$, then there exists $k\in\IZ$
      such that $\op_{Q}(b)+m'b+a'=S^{k}(b)$. Notice that $\op_{Q}(b)+m'b+a'=b'$
      is equivalent to $m\op_{Q}(b)+m'b+ma'-m'a=mb'$. Let
      $\op'(x)=m\op_{Q}(x)+m'x$, so that $\op'(b)-mb'=m'a-ma'$. Notice that
      $\op'$ is non-trivial, since $b'\in R(C)\setminus R(A)$.
      Since $\str{A}$ is a model of $T_{R}$, we can find
      $b_{0},b_{0}'\in R(A)$ such that $\op'(b_{0})-mb'_{0}=m'a-ma'$.
      Since
      $b,b'\in R(C)\setminus R(A)$, this implies by Lemma
      \ref{lemma-sol-eq-orbits} that $m'a-ma'=0$. As $\op'$ is non-trivial and
      $m\in\IZ\setminus\{0\}$,
      $(b,b')$ is a non-degenerate solution of $\op'(x)-my=0$. So, by
      \ref{axiom-eq}, there exists $k\in\IZ$ such that $b'=S^{k}(b)$, which is
      what we wanted.
    \item Assume that for all $m\in\IZ\setminus\{0\}$ and all $a\in A$,
      $md+a\notin R(C)$. In that case $S(md+a)=md+a$ for all
      $m\in\IZ\setminus\{0\}$ and all $a\in A$. This is enough to
      conclude.\qedhere
    \end{enumerate}
  \end{proof}
  \indent Using the previous claim, we may assume in the rest of the proof that
  the terms (with parameters in $A$) that appear in formulas are of the form
  $\op_{Q}(mx+a)+m'x+a'$, where $m'\in\IZ$, $a'\in A$ and
  \begin{enumerate}
  \item $m\in\IZ\setminus\{0\}$ and $a\in A$ are fixed when we are in
    \ref{case-1};
  \item $Q=0$ when we are in
    \ref{case-2}.
  \end{enumerate}
  \begin{claim}
    The number of types of the form $p_{|\L_{1}}(x)$ is at most
    $\max\{2^{\aleph_{0}},|A|\}$.
  \end{claim}
  \begin{proof}
    Indeed, any formula of the form $D_{n}(\op_{Q}(mx+a)+m'x+a')$ is equivalent
    to a
    formula of the form $D_{n}(\op_{Q}(mx+a)+m'x+k)$, where $k\in\IZ$ is such
    that
    $D_{n}(a'-k)$. In \ref{case-2}, we know that a formula of the form
    $m'x+a'=0$ is never in $p(x)$, unless $m'=0$ and $a'=0$. Let us now look at
    equations when we are in \ref{case-2}.\\
    \indent Assume that $\op_{Q}(mx+a)+m'x+a'=0\in p_{|\L_{1}}(x)$, where
    $m'\in\IZ$
    and $a'\in
    A$. Then, by axiom \ref{axiom-image}, $\Im_{mQ,m'X}(m'a-ma')$ holds in
    $\str{A}$. Thus there exists $b'\in R(A)$ such that $\op(mx+a)+m'(mx+a)=\op(b')+m'b'$. This
    implies, by axiom \ref{axiom-operator} that $\op(mx+a)+m'(mx+a)=0$. Hence
    $m'a-ma'=0$. So
    the only equations that appear in $p(x)$ are of the form $\op(mx+a)=0$.
  \end{proof}
  By the previous claim, it remains to show that the number of types of the form
  $p_{|\L_{2}}(x)$ is at most $\max\{2^{\aleph_{0}},|A|\}$. So we need to look
  at formulas of the form
  $\Im_{[Q],D}(\op_{1}(mx+a)+m_{1}x+a_{1},\ldots,\op_{k}(mx+a)+m_{k}x+a_{k})$. For
  simplicity, we only look at the case $k=1$. We may restrict ourselves to
  formulas of the form $\Im_{\bar{Q},D}(\op(mx+a)+a')$ in \ref{case-1} and
  $\Im_{\bar{Q},D}(m'x+a')$ in \ref{case-2}. We want to show that in both cases,
  we
  can separate the parameters from the variable, in the same way we did for
  divisibility conditions. This will be enough to
  conclude. For \ref{case-1}, this is a consequence of Corollary
  \ref{corollary-im}. For \ref{case-2}, we have the following claim.
  \begin{claim}
    Assume we are in \ref{case-2}. Let $\bar{Q}\in\IZ[X]^{n}$ and
    $m\in\IZ\backslash\{0\}$. Then there exists at most one $a_{\bar{Q}}\in A$
    such that $\sum_{i=1}^{n}\op_{Q_{i}}(x_{i})=mx+a_{\bar{Q}}$ has a
    non-degenerate solution in $R(C)\backslash R(A)$.
  \end{claim}
  \begin{proof}
    Assume that there exists another $a'\in A$ that satisfies the claim. Then we
    have $\str{A}\models\Im_{\bar{\op},-\bar{\op}}(a_{\bar{Q}}-a')$. Thus,
    we can find tuples $\bar{b}_{1},\bar{b}_{2}\in R(C)\backslash R(A)$ and
    $\bar{b}'_{1},\bar{b}'_{2}\in R(A)$ such that
    \[\sum_{i=1}^{n}\op_{Q_{i}}(b_{1i})-\op_{Q_{i}}(b_{2i})
      -(\op_{Q_{i}}(b'_{1i})-\op_{Q_{i}}(b'_{2i}))=0.\]
    But this can happen only if $a_{\bar{Q}}=a'$ by Lemma \ref{lemma-sol-eq-orbits}.
  \end{proof}
  As a consequence, we get that in \ref{case-2}, a formula of the form
  $\Im_{\bar{Q},D}(mx+a)$ is in $p_{|\L_{2}}(x)$ if and only if some
  disjunction of formulas of the form
  \[\Im_{\bar{Q}_{I},D}(mx+a_{\bar{Q}_{I}})\wedge
    \Im_{\bar{Q}_{[n]\backslash I},D}(a-a_{\bar{Q}_{I}})\]
  is in $p_{|\L_{2}}(x)$. This proves that the number of types of the form
  $p_{|\L_{2}}(x)$ in \ref{case-2} is at most $\max\{|A|,2^{\aleph_{0}}\}$. We
  conclude that $|S_{1}(A)|\le\max\{|A|,2^{\aleph_{0}}\}$.
\end{proof}
\begin{corr}\label{cor-sup2}
  $\Th(\str{Z})$ is superstable.
\end{corr}
\subsection{Decidability}\label{section-decid}
As a consequence of the fact that the theory of $\str{Z}_{R}$ is axiomatized by
$T_{R}$ when $R(\Z)$ is enumerated by a regular sequence, we get the following
decidability result.
First let us recall that a sequence $(r_{n})$ is \emph{effectively congruence
  periodic} if for all $k\in\IN^{>1}$, there exist effective constants $m,
p\in\IN$ such that the sequence $(r_{n})_{n\ge m}$ is periodic modulo $k$ with
period $p$.
\begin{theorem}\label{theorem-decidable}
  Assume that
  \begin{enumerate}
  \item the limit $\theta=\lim_{n\to\infty}r_{n+1}/r_{n}$ can be computed
    effectively and
  \item $R(\Z)$ is effectively congruence periodic,
  \end{enumerate}
  Then the $\L$-theory $T_{R}$ is decidable.
\end{theorem}
\begin{proof}
  Indeed, under these assumptions, the constants that appear in
  \ref{axiom-operator}, \ref{axiom-eq} can be computed effectively, using the
  proofs of Propositions \ref{proposition-operators} and
  \ref{proposition-op-inhom}. Furthermore, \ref{axiom-image} becomes
  effective thanks to the effective periodicity of $R(\Z)$. Thus, $T_{R}$ is
  recursively axiomatizable. And since $T_{R}$ is complete, we may conclude that
  $T_{R}$ is decidable.
\end{proof}

Examples of regular sequences that satisfy Theorem \ref{theorem-decidable} are
$(q^{n})$, $(n!)$ and the Fibonacci sequence. More generally in \cite{P} a family of such regular sequences is described, namely regular sequences for which in addition $\theta\in \R$, the sequence $(r_{n}/\theta^n)$ has a non zero limit, and $(r_{n}/\theta^n)$ converges to that limit effectively $(\dagger)$ \cite[Proposition 11]{P}.
Furthermore in the case $\theta$ is transcendental, one asks that the sequence is effectively congruence periodic. (When $(r_{n})$ satisfies a linear recurrence relation this is automatic.)
In the appendix of \cite{P}, one can find a proof that if $(r_{n})$ is an A. Bertrand sequence, then condition $(\dagger)$ is indeed fulfilled. (The condition of being an A. Bertrand sequence is a condition on the $\theta$-expansions of real numbers in the interval $[0,1]$ \cite[Definition, section 1]{P}.)

\indent Previously a lot of work has been done on the corresponding expansions of Presburger arithmetic: they are
decidable and admit quantifier elimination (adding to the language in particular the function $\lambda(x)$ which sends a positive $x$ to the biggest element of the sequence smaller than $x$, see below). These results are mainly due to
A. L. Sem\"enov who introduced the notion of
\emph{(effectively) sparse} sequences \cite{Sem}.
Let us introduce some notations: $=_{pp}$ (resp. $>_{pp} , <_{pp} $) means
equality (resp. $>$, $<$) for all but finitely many.\\
\indent A sequence $(r_{n})$ is \emph{sparse} if it satisfies the following properties:
\begin{enumerate}

\item
for all $Q\in\IZ[X]$, either $\{n\in \IN\mid \op_{Q}(n)=0\}=\N$, or
$\op_{Q}>_{pp} 0$, or $\op_{Q}<_{pp} 0$

\item for all $Q\in\IZ[X]$, if $\op_{Q}>_{pp} 0$, then
  there exists a natural number $\Delta$ such that $\op_{Q}(n+\Delta)-r_{n}>0$ for
  all $n\in\IN$.
\end{enumerate}
A sparse sequence $(r_{n})$ is \emph{effectively sparse} if the above conditions
are effective.
\par Let $\str{N}_{<,R}=(\IN,+,-,0,R,<)$.
A. L. Sem\"enov showed that $\str{N}_{<,R}$ is model
complete whenever $R$ is enumerated by a \emph{sparse sequence} and decidable
whenever $R$ is enumerated by an \emph{effectively sparse sequence} \cite[Theorem 3]{Sem}.
 Another proof of \cite[Theorem 3]{Sem} can be found in
\cite{P}. There, one axiomatizes the theory of $\str{N}_{<,R}$ for (almost) sparse sequences and proves it admits quantifier elimination \cite[Proposition
9]{P}, under the assumption that the (almost) sparse sequence is congruence
periodic. While this is an extra assumption in comparison to \cite[Theorem
3]{Sem}, we believe it can be removed at the cost of adding the symbols
$\Im_{[Q],D}$ we used in our quantifier elimination result.
Then decidability follows under the assumption that the sequence is effectively sparse \cite[Proposition 11]{P}.\\
\indent We end this section by giving a relation between sparse and regular
sequences.
\begin{lemma}\label{lem:regular}
 Let $(r_{n})$ be a regular sequence, then $(r_{n})$ is sparse.
\end{lemma}
\begin{proof}
  Let $\theta=\lim\limits_{n\to\infty} r_{n+1}/r_{n}\in \R^{>1}_{\infty}$. It is
  known that $(r_{n})$ is sparse whenever $\theta=\infty$  \cite[§ 3]{Sem} or
  $\lim\limits_{n\to\infty}r_{n}/\theta^{n}\in\IR^{>0}$ \cite[§4]{P}.

  Assume that $\theta\in\IR^{>1}$. We use the same kind of reasoning as in
  Section \ref{section-regseq} (see for instance Lemma \ref{lemma-op-fin}), but we consider
  polynomials $Q(X)\in \IQ[X]$ (instead of in $\Z[X]$) and in addition we
  whether the operators are strictly positive.

  Note that conditions such as
  $\op_{Q}(n)>0$ for all sufficiently large $n\in\IN$ are equivalent to
  $Q(\theta)>0$. So the first condition of the definition of sparse sequence is
  satisfied. For the second condition assume that $Q(\theta)>0$. The condition
  $\op_{Q}(n+\Delta)-r_{n}>0$ for all $n\in\IN$ sufficiently large is equivalent
  to $\theta^{\Delta}Q(\theta)-1>0$. Since $Q(\theta)>0$, we can then find
  $\Delta_{0}\in\IN$ and $n_{0}\in\IN$ such that $\op_{Q}(n+\Delta_{0})-r_{n}>0$
  for all $n\ge n_{0}$. So, letting $\Delta=\Delta_{0}+n_{0}$, we get that a regular
  sequence satisfies the second condition of the definition of a sparse sequence.
\end{proof}
\subsection{Expansions of Presburger arithmetic and the NIP
  property}\label{section-nip}
Let $R$ be enumerated by a regular and congruence periodic sequence. In this last section, we will show that the theory of $\str{Z}_{<,R}=(\IZ,+,-,0,R,<)$ is NIP. This result was announced in \cite[Page 5934]{aschenbrenner} and \cite[Page 359]{aschenbrenner2013} for $(2^{n})$ and the Fibonnaci sequence but without a
proof. So we thought useful to provide one here.\\
\indent We begin by briefly describing the setting of \cite{P} but putting ourselves in $\Z$ (instead of $\N$) in order to remain in the same setting as in the first parts of this article. So from now on, even though all the results in \cite{P} and \cite{Sem} are stated for expansions of $\N$ by a (congruence periodic almost) sparse sequence, we will apply them to the corresponding expansions of $\IZ$.
In \cite{P}, the second author considered theories $T_{<,R}$ extending
Presburger arithmetic and she showed on one hand that $T_{<,R}$ admits quantifier
elimination \cite[Proposition 9]{P} and on the other hand that given an (almost)
sparse congruence periodic sequence $(r_{n})$, the $\L_{<,R}$ expansion
$\str{Z}_{<,R}$ is a model of $T_{<,R}$. Let us briefly recall the
axiomatization of  $T_{<,R}$.

It is convenient to use the same language as in \cite{P}, namely
\[\L_{<,R}=\{+,-,<,0,1,\cdot/n,\lambda_{R},S,S^{-1}\mid  n\in\IN^{>1}\},\]
where $\cdot/n$ is a unary function symbol interpreted as
$\forall x\forall y\; (x/n=y\leftrightarrow\bigvee_{0\le k<n}x=ny+k)$, $n\in\IN^{>1}$, $\lambda_{R}$ is
a unary function symbol interpreted on $\IZ$ as the function sending $x\leq 0$ to $0$ and for $x>0$ to the
biggest element of $R$ smaller than or equal to $x$ and $S$, $S^{-1}$ are unary function symbols
interpreted, as before, as the successor and predecessor functions on $R$. Also in this last section, $\L_g$ will
denote the language $\{+,-,0,1,\cdot/n \mid  n\in\IN^{>1}\}$ (previously we used the unary relation symbols $D_n$ instead of the
unary functions $\cdot/n$.)

The $\L_{<,R}$-theory $T_{<,R}$  \cite[page 1353]{P} consists of a list of axioms translating the properties of $(\Z,+,-,<,0,1)$, of the sequence $(R(\Z),<,1,S, S^{-1})$, and of the relationships between the two structures, namely the properties of the unary function $\lambda_{R}$ (the sequence $R(\Z)$ is interpreted by $\{z\in \Z\mid\,\lambda_{R}(z)=z\}$):
\[
\forall x\;\big{(}(\lambda_{R}(\lambda_{R}(x))=\lambda_{R}(x)\;\wedge\;(x\leq 0\rightarrow \lambda_{R}(x)=0\;\wedge \forall x\geq 1\;
(\lambda_{R}(x)\leq x<S(\lambda_{R}(x))\;\wedge
\]
\[
\forall y (\lambda_{R}(x)\leq y<S(\lambda_{R}(x))\rightarrow \lambda_{R}(y)=\lambda_{R}(x))\big{)},
\]
how the sequence $R(\Z)$ behaves with respect to the congruences and finally the properties of an (almost) sparse sequence (called axiom (6) in \cite{P}). In the following we will make explicit use of this last scheme of axioms and so we will state it below explicitly. We use the notations of \cite{P} and in particular the $\L_{<,R}$-terms that we will describe are the analogs of the $\L_g\cup\L_S$-terms $\op_Q$, for $Q(X)\in \Z[X]$ in Section \ref{sec:axio}.
\par Given an $\L_{<,R}$-term $T(x)$ of the form $\sum_{j\geq 0} m_jS^{-j}(x)/n_j$ with $m=(m_j)$ and $n=(n_j)$ with $m_j \in \IZ, n_j \in \N, m_0\neq 0$, we associate finitely many expressions: $A(m,n)(x)-d(x)$, where $d(x)\in \IZ$, $A(m,n)(x)=\sum_{j\geq 0} \frac{m_j}{n_j}S^{-j}(x)$ and $d(x)=\sum_{j\geq 0} \frac{m_{j}}{n_{j}}d_{j}$
 where $S^{-j}(x)\equiv_{n_{j}} d_{j}$, $0\leq d_{j}<n_{j}$. So even though in $A(m,n)(x)$ coefficients in $\IQ$ may occur, we only apply division by a non-zero natural number $n_{j}$ when the numerator is divisible by $n_{j}$.

Given any such $A(m,n)(x)$ abbreviated by $A(x)$, there is a constant $k(m,n)$ and a natural number $s$ such that the following axiom holds:\\
\indent whenever for some $x\in R$, $x>k(m,n)$ and $A(x)>0$, then  we have either:
 \begin{align}
 \forall y\in R&\; (y>k(m,n)\rightarrow S^{s-1}(y)<A(y)<S^s(y)), \;\;{\rm or}\label{4}\\
\forall y\in R&\; (y>k(m,n)\rightarrow S^{s-1}(y)=A(y)).\label{5}
\end{align}
Moreover, we can describe the behaviour of $A(x)$ under a {\it small} perturbation: there is a constant $k'(m,n)\geq k(m,n)$ and $j_{0}\in \IN$ such that
for every $y\in R, y>k'(m,n)$ and all $j\geq j_{0}$:
 \begin{align}
S^{s-1}(y)\leq A(y)<S^s(y)&\rightarrow S^{s-1}(y)\leq A(y)+S^{-j}(y)<S^s(y), \label{6}\\
S^{s-1}(y)< A(y)<S^s(y)&\rightarrow S^{s-1}(y)\leq A(y)-S^{-j}(y)<S^s(y), \label{7}\\
S^{s-1}(y)=A(y)&\rightarrow S^{s-2}(y)\leq A(y)-S^{-j}(y)<S^{s-1}(y). \label{8}
\end{align}
We will refer to this scheme of axioms (when $A(m,n)(x)$ varies) by $(S_{sparse})$.
\begin{remark}
  The axiom scheme $(S_{{}sparse})$ has the following consequence. Let $M\models
  T_{<,R}$. Recall that two strictly positive elements $g, h\in M$
are in the same archimedean class if there are $n, m\in \N^0$ such that $g\leq
nh\leq mg$. We observe that if the sequence $(r_{n+1}/r_{n})$ is unbounded, then
$r_{n+1}/r_{n}\to\infty$. Indeed, if $(r_{n+1}/r_{n})$ is unbounded, then for
all $m\in\IN^{>0}$, for all but finitely many elements $x$ of $R$, by axioms
(\ref{4}) and (\ref{5}), we have  $mx<S(x)$. So we get that either there exists
$n\in \N^{>0}$
such that for all positive $y$ but finitely many $\lambda_R(y)\leq y\leq
n\lambda_R(y)$ or for all $m\in \N^{>0}$ for all but finitely many $y$,
$m\lambda_R(y)\leq S(\lambda_R(y))$. In other words for elements $y$ bigger than
$\IZ$, either $y$ and $\lambda_R(y)$ are in the same archimedean class or never.
\end{remark}
\begin{lemma}\label{lem:sparse}
 Let $R(\Z)$ be enumerated by a sparse sequence $(r_{n})$, then $\str{Z}_{<,R}$
 satisfies the scheme $(S_{sparse})$.
 \end{lemma}
\begin{proof}
Let $\op_{Q}(n)=a_{0}r_{n}+\cdots+a_{d}r_{n+d}$ with $Q(X)=\sum_{i=0}^d a_i X^i\in \IQ[X]$.
Note that, for $m\in \IN$, $\op_{QX^{m}}(n)= a_{0}r_{n+m}+\cdots+a_{d}r_{n+d+m}$, $\op_{QX^{m}-1}(n)=a_{0}r_{n+m}+\cdots+a_{d}r_{n+d+m}-r_{n}.$

Assume that $\op_{Q}>_{pp}0$. Since $(r_n)$ is sparse, there exists $\Delta\in \IN$ such that
$\op_{Q}(n+\Delta)-r_{n}>0$ for all $n$.  Then choose $\Delta$ minimal with the property that $\op_{QX^{\Delta}-1}>_{pp} 0$. This implies in particular that we do not have
$\op_{QX^{\Delta-1}-1}>_{pp} 0$. But $\op_{QX^{\Delta-1}-1}$ is another operator
and so since the sequence is sparse, we have that either $\op_{QX^{\Delta-1}-1}(n)=0$ for all $n$, or that $\op_{QX^{\Delta-1}-1}<_{pp}0$.
 So for almost all $n$, we have that $r_{0}\leq a_{0}r_{n+\Delta}+\cdots+a_{d}r_{n+d+\Delta}< r_{n+1}.$
 In case $a_{0}r_{n+\Delta}+\cdots+a_{d}r_{n+d+\Delta}-r_{n}>0$, by re-applying the same argument we get that for some $\Delta'$,
 we have that $a_{0}r_{n+\Delta+\Delta'}+\cdots+a_{d}r_{n+d+\Delta+\Delta'}-r_{n+\Delta'}-r_{n}>0$, namely setting $m=n+\Delta'$, we get:
 $a_{0}r_{m+\Delta}+\cdots+a_{d}r_{m+d+\Delta}-r_{m-\Delta'}>r_{m}.$
 Finally, we consider $r_{n+1}-a_{0}r_{n+\Delta}+\cdots+a_{d}r_{n+d+\Delta}$, since this is strictly positive there exists $\Delta''$ such that
 $r_{n+\Delta''+1}-(a_{0}r_{n+\Delta+\Delta''}+\cdots+a_{d}r_{n+d+\Delta+\Delta''})-r_{n}>0$. Therefore, $a_{0}r_{n+\Delta+\Delta''}+\cdots+a_{d}r_{n+d+\Delta+\Delta''}+r_{n}<r_{n+\Delta''+1}$. Again setting $m=n+\Delta''$, we get:
 $a_{0}r_{m+\Delta}+\cdots+a_{d}r_{m+d+\Delta}+r_{m-\Delta''}<r_{m+1}.$
 \end{proof}

\par Let $T$ be a complete $\L$-theory. Then $T$ is NIP if all (partitioned)
formulas $\varphi(x; y)$ are NIP \cite[Definition 2.10]{simon}. (In a
partitioned formula, one indicates the parameters (in this case $y$)).  Also,
for convenience, in this section we will adopt the following conventions: we
will
use single letters to possibly denote tuples of variables and since we deal with
ordered structures, we will use the notation $|x|$ for $\max\{x,-x\}$, even
though we previously used it for denoting either the cardinality of a set or the length of a tuple.
\begin{lemma}[{\cite[Lemma 2.9]{simon}}] \label{lem:bc} Assume that $T$ admits quantifier elimination. Then $T$ is NIP if and only if  all atomic formulas $\varphi(x; y)$ are NIP.
\end{lemma}
\begin{lemma}[{\cite[Proposition 2.8]{simon}}] Let $\str{M}\models T$.
 The partitioned formula $\varphi(x; y)$ is NIP if and only if for any
 indiscernible sequence $(a_{i}\mid i<\omega_{1})$, where the length of $a_i$ is equal to the length of $x$ and tuple $b$ in $M$ there is some end segment $I\subset \omega_{1}$ such that  for  any $i\in I$, the truth value of $\varphi(a_{i};b)$ is constant.
\end{lemma}
We will use Hahn representation theorem for divisible ordered abelian groups \cite[Section 4.5]{glass}. Let $G$ be an abelian totally ordered group and let $\bar G$ be its divisible closure. Given an element $g\in G\setminus\{0\}$ there is a unique convex subgroup $V$ maximal for the property of not containing $g$; it is called a {\it value} for $g$. There is also a smallest convex subgroup $V^+$ containing $g$ and the quotient $V^+/V$ is an archimedean ordered group (which by H\"older's theorem, embeds in $(\IR,+,<,0)$). The set of all values in $G$ forms a chain denoted by $\Gamma(G)$; we set $\Gamma(G)=\{V_{\gamma}\mid \gamma\in \Gamma\}$ and for $V=V_\gamma$, we denote $V^+$ by $V^{\gamma}$.
Note that $\Gamma(\bar G)=\Gamma(G)$. Denote by $R_{\gamma}=V^{\gamma}/V_{\gamma}$
and let $\bar R_{\gamma}$ be the $\IQ$-vector-subspace generated by $R_\gamma$ in $\R$.
 One can decompose $\bar G$ as a direct sum $\bar G=\bar V^{\gamma}\oplus D_{\gamma}$, where $\bar V^{\gamma}$ is the divisible closure of $V^{\gamma}$ in $\bar G$ and $D_\gamma$ is some direct summand. Denote by $\pi_{\gamma}$ the projection of $\bar G$ to $\bar V^{\gamma}$ and let $\rho_{\gamma}:\bar V^{\gamma}\to \bar R_{\gamma}$. Then one sends $g$ to the function $\hat g: \Gamma(G)\to \IR:\gamma\mapsto \rho_{\gamma}\pi_{\gamma}(g)=\hat g(\gamma)$.
One verifies that $\supp(g)=\{\gamma\in \Gamma(G)\mid \hat g(\gamma)\neq 0\}$ is an anti-well ordered subset of $\Gamma(G)$.
Denote by $V(\Gamma(G),\bar R_{\gamma})$ the lexicographically ordered group of functions $f$ from $\Gamma(G)$ to $\R$ with anti-well-ordered support, such that $f(\gamma)\in \bar R_\gamma$, for any $\gamma\in \Gamma(G)$. Then $G$ embeds in $V(\Gamma(G),\bar R_{\gamma})$ by the map $g\mapsto \hat g$ \cite[Theorem 4C]{glass}.
Define a map $v:G\to \Gamma(G)$ which sends $g\in G\setminus\{0\}$ to
$\max(\supp(\hat{g}))$. This is a valuation map on $G$ as defined in \cite[Chapter 4, section 4]{Fuchs}  except that there one takes
the opposite order on $\Gamma(G)$. It is constant on an archimedean class, namely if satisfies the following:
for all $g, h\in G^{>0}$, $v(g)=v(h)$ if and only if $g\le  nh \le mg$ for some
$n, m\in\IN^{>0}$ (in other words, $g$ and $h$ are in the same archimedean class).

\indent In order to show that $T_{<,R}$ is NIP, we need some preparatory work in
order to evaluate terms of the form $\lambda_R(x\pm y)$ for $x, y>0$.
\begin{lemma}\label{lemma-lambda}
  Let $\str{M}\models T_{<,R}$, $d\in M$ and $(c_{i}\mid i\in \omega_1)$ be a
  non-constant indiscernible sequence in $M$ such that $d> 0$ and $c_{i}>0$ for all
  $i\in \omega_1$. Then there exist $i_{0}\in \omega_1$, $\ell\in\IZ$ such that one of the following
  holds for all $i\ge i_{0}$:
  \begin{itemize}
  \item $\lambda_R(c_{i}\pm d)=S^{\ell}(\lambda_R(c_{i}))$;
  \item $\lambda_R(d\pm c_{i})=S^{\ell}(\lambda_R(d))$;
  \item $\lambda_R(|d-c_{i}|)= S^{\ell}(\lambda_R(|d-c_{i_{0}}|))$;
  \item $\lambda_R(|d-c_{i}|)= S^{\ell}(\lambda_R(|c_{i+1}-c_{i}|))$;
  \item $\lambda_R(|d-c_{i+1}|)=S^{\ell}(\lambda_R(|c_{i+1}-c_{i}|))$.
  \end{itemize}
\end{lemma}
\begin{proof}
  The proof has two main ingredients. The first one is that
  in a  model of a NIP theory, an indiscernible sequence remains eventually
  indiscernible over a parameter (see \cite[Claim in the proof of
 Proposition 2.11]{simon}). The second one is the following consequence of
 axioms (\ref{4}) and (\ref{5}) in scheme $(S_{sparse})$: for all $n,m\in\IN^{>0}$
 there exist $\ell,\ell'\in\IZ$ such that  for
 all $x\in M$, if $x$ is bigger than some natural number then
 $S^{\ell}(\lambda_{R}(x))\le\frac{1}{n}\lambda_{R}(x)\le
 \frac{m}{n}S(\lambda_{R}(x))\le S^{\ell'}(\lambda_{R}(x))$.

 Let $(c_{i}\mid i\in \omega_1)$ be a
  non-constant indiscernible sequence. We will apply the first ingredient to
  certain sequences of the form $(t(c_{i})\mid i\in \omega_1)$, where $t(x)$ is
  a  $\L_{<,R}$-terms, and to the NIP
  theory $\Th(\IZ,+,-,0,<)$.
  For ease of notations, we may assume that
  $(c_{i}\mid i\in \omega_1)$ is indiscernible over $d$ in $\{+,-,0,<\}$.  In particular we have that either
  $d>c_{i}$ for all $i\in \omega_1$ or $d<c_{i}$ for all $i\in \omega_1$.
  By comparing $v(d)$ to $v(c_0)$, we obtain that the sequence $(c_i\mid i\in \omega_1)$ falls into the following cases for some $m, n\in \N^{>0}$:
  \begin{enumerate}
  \item $c_{i}\le n(c_{i}\pm d)\le m c_{i}$ for all $i\in \omega_1$;
  \item $d\le n(d\pm c_{i})\le m d$ for all $i\in \omega_1$;
  \item $|c_{i+1}-c_{i}|\le n|d-c_{i+1}|\le m|c_{i+1}-c_{i}|$ for all $i\in \omega_1$;
  \item $|d-c_{0}|\le n|d-c_{i}|\le m|d-c_{0}|$ for all $i\in \omega_1$;
  \item $|c_{i+1}-c_{i}|\le n|d-c_{i}|\le m|c_{i+1}-c_{i}|$ for all $i\in \omega_1$.
  \end{enumerate}
 Cases 1 and 2 occur when $v(d)\neq v(c_0)$ and case 2 together with the remaining cases when $v(d)=v(c_0)$. In case $v(d)=v(c_0)$, we further compare
 $v(c_{1}-c_{0})$, $v(d-c_{1})$ and $v(d-c_0)$. Let us check it in details.
  \begin{enumerate}
 \item $v(d)<v(c_{0})$. Then $v(c_{0}\pm d)=v(c_{0})$, so that $c_{0}\le
   n(c_{0}\pm d)\le m c_{0}$ for some $n,m\in\IN^{>0}$. By indiscernibility over
   $d$, we get that $c_{i}\le n(c_{i}\pm d)\le m c_{i}$ for all $i\in \omega_1$;
 \item $v(d)>v(c_{0})$. Then $v(d\pm c_{0})=v(d)$, so that $d\le
   n(d\pm c_{0})\le m d$ for some $n,m\in\IN^{>0}$. By indiscernibility over
   $d$, we get that $d\le n(d\pm c_{i})\le m d$ for all $i\in \omega_1$;
 \item $v(d)=v(c_{0})$. Then, as $v(d+c_{0})=v(d)$, we have that $d\le
   n(d+c_{0})\le md$ for some $n,m\in\IN^{>0}$. Hence  $d\le n(d + c_{i})\le m
   d$ for all $i\in \omega_1$. Furthermore, one of the following holds, using
   indiscernibility as before:
   \begin{enumerate}
   \item $v(c_{1}-c_{0})=v(d-c_{1})$. Then there are $n, m\in\IN^{>0}$ such that
   $|c_{1}-c_{0}|\le n|d-c_{1}|\le m|c_{1}-c_{0}|$ and so for all $i\in \omega_1$,
     $|c_{i+1}-c_{i}|\le n|d-c_{i+1}|\le m|c_{i+1}-c_{i}|$;
   \item $v(c_{1}-c_{0})\neq v(d-c_{1})$.  Then
     $v(d-c_{0})=\max\{v(d-c_{1}),v(c_{1}-c_{0})\}$. So,
     \begin{enumerate}
     \item either $v(d-c_{0})=v(d-c_{1})$, in which case
       there are $n, m\in\IN^{>0}$ such that $|d-c_{0}|\le n|d-c_{1}|\le
       m|d-c_{0}|$ and so for all $i\in \omega_1$,
       $|d-c_{0}|\le n|d-c_{i}|\le
       m|d-c_{0}|$;
     \item or $v(d-c_{0})=v(c_{1}-c_{0})$, in which case
       there are $n, m\in\IN^{>0}$ such that $|c_{1}-c_{0}|\le n|d-c_{0}|\le m|c_{1}-c_{0}|$ and so for all $i\in \omega_1$,
       $|c_{i+1}-c_{i}|\le n|d-c_{i}|\le m|c_{i+1}-c_{i}|$.
     \end{enumerate}
   \end{enumerate}
 \end{enumerate}
 Now given $g, h\in M^{>0}$ in the same archimedean class, more precisely such that
 $h\le ng\le mh$, for some $n, m\in\IN^{>0}$, observe that there are $\ell, \ell'\in \N$
 such that $\lambda_R(g)\in\{S^{k}(\lambda_R(h))\mid \ell\le k\le\ell'\}$. This
 follows from axioms (\ref{4}) and (\ref{5}). Indeed,
 there are $\ell, \ell'\in \N$ such that  to $\frac{m}{n}S(\lambda_R(h))\leq S^{\ell}(\lambda_R(h))$ and $S^{\ell'}(\lambda_R(h))\leq \frac{1}{n}\lambda_R(h)$,
 for $h$ bigger than some natural number.

  Applying the discussion above, with $g$ of the form $\vert d\pm c_i\vert$  and $h$ equal to either $c_i$, $d$, $d-c_0$, $\vert c_{i+1}-c_i\vert$, or $\vert c_{i}-c_{i-1}\vert$, we get:
  \begin{itemize}
  \item $\lambda_R(c_{i}\pm d)\in\{S^{k}(\lambda_R(c_{i}))\mid \ell\le k\le\ell'\}$;
  \item $\lambda_R(d\pm c_{i})\in\{S^{k}(\lambda_R(d))\mid \ell\le k\le\ell'\}$;
  \item $\lambda_R(|d-c_{i}|)\in\{S^{k}(\lambda_R(|d-c_{0}|))\mid \ell\le
    k\le\ell'\}$;
  \item $\lambda_R(|d-c_{i}|)\in\{S^{k}(\lambda_R(|c_{i+1}-c_{i}|))\mid \ell\le
    k\le\ell'\}$;
  \item $\lambda_R(|d-c_{i+1}|)\in\{S^{k}(\lambda_R(|c_{i+1}-c_{i}|))\mid \ell\le
    k\le\ell'\}$.
  \end{itemize}
In case $\lambda_R(|d-c_{i}|)$ and $\lambda_R(d+c_{i})$ belong to a finite set, for $i$ sufficiently big, we get a constant value since these sequences are monotone.

\noindent  Let us focus on the case where $\lambda_R(|d-c_{i}|)\in\{S^{k}(\lambda_R(|c_{i+1}-c_{i}|))\mid \ell\le k\le\ell'\}$. The cases where $\lambda_R(c_{i}\pm d)\in\{S^{k}(\lambda_R(c_{i}))\mid \ell\le k\le\ell'\}$ and $\lambda_R(|d-c_{i+1}|)\in\{S^{k}(\lambda_R(|c_{i+1}-c_{i}|))\mid \ell\le   k\le\ell'\}$ are similar.

First let us assume that
  $|d-c_{i}|=d-c_{i}$ for all $i\in \omega_1$.
  Recall that
  $\lambda_R(d-c_{i})=S^{k}(\lambda_R(|c_{i+1}-c_{i}|))$ means
  $S^{k}(\lambda_R(|c_{i+1}-c_{i}|))\le
  d-c_{i}<S^{k+1}(\lambda_R(|c_{i+1}-c_{i}|))$. So, for $k\in[\ell,\ell']$, let
   $(t_k(c_{i}))\mid {i\in \omega_1})$ be the sequence defined by
  $t_k(c_{i})=S^{k}(\lambda_R(|c_{i+1}-c_{i}|))+c_{i}$. For all $k\in[\ell,\ell']$, the
  sequence $(t_k(c_{i})\mid {i\in \omega_1})$ is indiscernible. Hence for each
  $k\in[\ell,\ell']$, we may assume that
  $(t_k(c_{i})\mid i\in \omega_1)$ is $\{<\}$-indiscernible over $d$, for $i>i_k$.
  Let  $k_{0}\in[\ell,\ell']$ maximal such that $t_{k_0}(c_{i})\le d$
  for all $i>i_0=\max\{i_{k}\mid {\ell\leq k\leq \ell'}\}$.
  Then we have that for all $i>i_0$ that
  $d<t_{k_{0}+1}(c_i)$. Thus,
  $\lambda_R(d-c_{i})=S^{k_{0}}(\lambda_R(|c_{i+1}-c_{i}|))$ for all $i>i_0$.

  Second, if $|d-c_{i}|=c_{i}-d$ for all $i\in \omega_1$, we can repeat the argument using the
  sequences $(t'_{k}(c_i)\mid {i\in \omega_1})$ defined by
  $t'_k(c_{i})=c_{i}-S^{k}(\lambda_R(|c_{i+1}-c_{i}|))$, for all $i\in \omega_1$ and $k\in[\ell,\ell']$.
\end{proof}

\begin{theorem}\label{thm-nip} The theory $T_{<,R}$ is NIP.
\end{theorem}
\begin{proof}
  Since $T_{<,R}$ has quantifier elimination by \cite[Proposition 9]{P}, we may
  apply Lemma \ref{lem:bc} and consider atomic formulas. Let us first look at $\L_{<,R}$-terms, when we evaluate
  them along an indiscernible sequence and a parameter. Let $(a_{i}\mid i\in \omega_1)$ be a
  non-constant
  indiscernible sequence and $b\in M$ some tuple of parameters.
  \begin{claim}\label{claim:sep} Let $t(x;y)$ be an $\L_{<,R}$-term. Then there
    exists  $i_{0},i_{1},\ldots,i_{n}\in \omega_1$ and
    $\L_{<,R}$-terms $t_{1}(x,x_1,\ldots,x_{2n})$,
    $t_{2}(y,y_{1},\ldots,y_{n})$ such that
    for all
    $i\ge i_{0}$:
    \[
      t(a_{i}; b)=
      t_{1}(a_{i},a_{i+1},\ldots,a_{i+n},a_{i-1},\ldots,a_{i-n})
      +t_{2}(b,a_{i_{1}},\ldots,a_{i_{n}}).
    \]
  \end{claim}
\begin{proof}[Proof of Claim]
  We proceed by induction on the number of occurrences of the symbol $\lambda_R$
  in $t(x;y)$. If $t(x;y)$ is an $\L_{g}$-term, we can write it as
  $t_{1}(x)+t_{2}(y)$, see \cite[Lemma 4]{P}. So what remains to be shown is
  that the claim holds for terms of the form
  \[\lambda_R( t_{1}(a_{i},a_{i+1},\ldots,a_{i+n},a_{i-1},\ldots,a_{i-n})
    +t_{2}(b,a_{i_{1}},\ldots,a_{i_{n}})),\]
  where $t_{1}(x,x_1,\ldots,x_{2n})$ and
    $t_{2}(y,y_{1},\ldots,y_{n})$ are $\L_{<,R}$-terms. Let
    $d=t_{2}(b,a_{i_{1}},\ldots,a_{i_{n}})$ and $c_{i}=
    t_{1}(a_{i},a_{i+1},\ldots,a_{i+n},a_{i-1},\ldots,a_{i-n})$. Without loss of
    generality, we may assume that $d\neq 0$ and $(c_{i}\mid i\in\omega_{1})$ is
    non-constant. Then
    $(c_{i}\mid i\in\omega_{1})$
    is indiscernible and one of the following holds:
    \begin{enumerate}
    \item $c_{i}+d\le 0$ for all $i\in \omega_1$ sufficiently large. In that case,
      $\lambda_R(c_{i}+d)=0$ for all $i\in \omega_1$ sufficiently large;
    \item $c_{i}+d> 0$ for all $i\in \omega_1$ sufficiently large. In this case, we
      apply Lemma \ref{lemma-lambda} to conclude.\qedhere
    \end{enumerate}
\end{proof}

Finally let us check that the truth value of $\varphi(a_{i};b)$ is eventually
constant, when $\phi(x;y)$ is an atomic formula. By Claim \ref{claim:sep}, we
may assume that $\varphi(x; y)$ is of the form $t_{1}(\bar x)< t_{2}(\bar{y})$
or $t_{1}(\bar x)=t_{2}(\bar{y})$ for $t_{1}$ and $t_{2}$ two
$\L_{<,R}$-terms. Here we used $\bar x$ instead of $x$ since the length of
$\bar x$ is possibly bigger than the length of $x$ (and likewise for $y$).
Let $c_{i}=t_{1}(\bar{a})$ and $d=t_{2}(\bar{b})$. Then $(c_{i}\mid i\in \omega_1)$ is
indiscernible and there exists $i_{0}\in \omega_1$ such that $(c_{i}\mid i\ge i_{0})$
is indiscernible over $d$ in the language $\{<\}$.
In particular the truth value of $t_{1}(\bar
x)<t_{2}(\bar{b})$ or $t_{1}(\bar x)=t_{2}(\bar{b})$ is constant over
$(c_{i}\mid i\ge i_{0})$.
\end{proof}

\begin{corr}\label{corollary-nip}
  Let $R(\IZ)$ be enumerated by a congruence periodic regular sequence.
  Then the theory $\Th(\str{Z}_{<,R})$ is NIP.
\end{corr}
\begin{proof}
  Since $R(\Z)$ is enumerated by a sparse sequence (see Lemma
  \ref{lem:regular}), by Lemma \ref{lem:sparse}, the theory $T_{<,R}$ is equal
  to the theory of $\str{Z}_{<,R}$.
\end{proof}
\section*{Acknowledgments}
The authors would like to thank Gabriel Conant for
helpful comments on a previous version of this paper and in particular for
spotting a mistake in a previous draft and for enlightening
exchanges. Finally, we would like to warmly thank the referee for their very detailed and careful report.
\bibliographystyle{abbrv}

\end{document}